\newtheorem{thm}{Theorem}[section]
\newtheorem{lem}[thm]{Lemma}
\newtheorem{prop}[thm]{Proposition}
\newtheorem{cor}[thm]{Corollary}
\newtheorem{NN}[thm]{}
\theoremstyle{definition}\newtheorem{df}[thm]{Definition}
\theoremstyle{definition}\newtheorem{rem}[thm]{Remark}
\theoremstyle{definition}
\renewcommand{\phi}{\varphi}
\newcommand{\Z}{\mathbb{Z}}
\newcommand{\Q}{\mathbb{Q}}
\newcommand{\C}{\mathbb{C}}
\newcommand{\T}{\mathbb{T}}
\newcommand{\Aff}{\operatorname{Aff}}
\newcommand{\id}{\operatorname{id}}
\newcommand{\morp}{contractive completely positive linear map}
\newcommand{\hm}{homomorphism}
\newcommand{\dt}{\delta}
\newcommand{\ep}{\epsilon}
\newcommand{\F}{{\cal F}}
\newcommand{\andeqn}{\,\,\,{\rm and}\,\,\,}
\newcommand{\rforal}{\,\,\,{\rm for\,\,\,all}\,\,\,}
\newcommand{\CA}{C*-algebra}
\newcommand{\SCA}{C*-subalgebra}
\newcommand{\tr}{{\rm TR}}
\newcommand{\bt}{{\beta}}
\newcommand{\beq}{\begin{eqnarray}}
\newcommand{\eneq}{\end{eqnarray}}
\newcommand{\tforal}{\,\,\,\text{for\,\,\,all}\,\,\,}
\newcommand{\tand}{\,\,\,\text{and}\,\,\,}
\title{Asymptotic unitary equivalence in $C^*$-algebras }
\author{Huaxin Lin and Zhuang Niu
 }
\date{}
\begin{document}

\maketitle

\begin{abstract}
Let $C=C(X)$ be the unital \CA\, of all continuous functions on a finite CW  complex $X$ and  let $A$ be a unital simple
\CA\, with tracial rank at most one.
We show that two unital monomorphisms $\phi, \psi: C\to A$ are asymptotically unitarily equivalent,\, i.e., there exists a continuous path of
 unitaries $\{u_t: t\in [0,1)\}\subset A$ such that
 $$
 \lim_{t\to 1} u_t^*\phi(f)u_t=\psi(f)\tforal f\in C(X),
 $$
 if and only if
\beq\nonumber
[\phi]&=&[\psi]\,\,\,{\rm in}\,\,\, KK(C, A),\\\nonumber
\tau\circ \phi&=&\tau\circ \psi\tforal \tau\in T(A),\andeqn\\\nonumber
\phi^{\dag}&=&\psi^{\dag},
\eneq
where $T(A)$ is the simplex of tracial states of $A$ and
$\phi^{\dag}, \psi^{\dag}: U(M_{\infty}(C))/DU(M_{\infty}(C))\to$ \\ $U(M_{\infty}(A))/DU(M_{\infty}(A))$ are induced \hm s and
where $U(M_{\infty}(A))$ and $U(M_{\infty}(C))$ are groups of union of unitary groups of $M_k(A)$ and $M_k(C)$ for all integer $k\ge 1,$ $DU(M_{\infty}(A))$ and $DU(M_{\infty}(C))$ are commutator subgroups of $U(M_{\infty}(A))$ and $U(M_{\infty}(C)),$ respectively.
We actually prove a more general result for the case that $C$ is any general unital AH-algebra.

\end{abstract}

\section{Introduction}
In the study of topology, it is fundamentally important to study
continuous maps between topological spaces. In the study of \CA s,
or sometime called the non-commutative topological space, it is essential to study
\hm s from one \CA\,to another.

One of the central problems in classification of amenable C*-algebras is to determine how certain equivalence classes of homomorphisms between C*-algebras can be determined by their K-theoretical invariants.
In this note, we will study the unital monomorphisms from a {unital} commutative C*-algebra $C,$ or, more general, arbitrary {unital} AH-algebras, to a simple C*-algebra $A$
with finite tracial rank (see \ref{TA1} below)
and consider the question when two given
unital monomorphisms $\phi, \psi: C\to A$ are asymptotically unitarily equivalent, that is,  when does there exist a continuous path of unitaries
$\{u_t: t\in [0,1)\}\subset A$ such that
$$
\lim_{t\to 1} u_t^*\phi(f)u_t=\psi(f)\tforal f\in C.
$$

If one considers approximately unitary equivalence (recall that the maps $\phi$ and $\psi$ are approximately unitarily equivalent if there exists a sequence of unitaies $\{u_n\}\subset A$ such that $\lim_{n\to\infty} u_n^*\phi(f)u_n=\psi(f)\tforal f\in C$), there are  already several results recently:

Let $C$ be a unital AH-algebra and let $A$ be a unital simple \CA\, with tracial rank zero. It has been shown in \cite{Lin-UTAF} by the first author   that $\phi$ and $\psi$ are approximately unitarily equivalent if and only if $$
[\phi]=[\psi]\,\,\,{\rm in}\,\,\rm KL(C,A)\andeqn
\tau\circ \phi=\tau\circ \psi\tforal \tau\in T(A).
$$
And in \cite{Ng-W-IUMJ}, Ng and Winter showed that the result above still holds if $C=C(X)$ with $X$ a second countable, path connected, compact metric space and $A$ is any simple unital separable nuclear C*-algebra which is real rank zero and $\mathcal Z$-stable, where $\mathcal Z$ is the Jiang-Su algebra.

Beyond the real rank zero case, in a more recent paper (\cite{Lin-AU11}), it was shown that, if $A$ is a unital simple \CA\, with tracial rank at most one, then $\phi$ and $\psi$ are approximately unitarily equivalent
if and only if
\beq\nonumber
[\phi]&=&[\psi]\,\,\,{\rm in}\,\,\, KL(C,A),\\
\tau\circ \phi&=&\tau\circ \psi\tforal \tau\in T(A)\andeqn\,
\phi^{\ddag}=\psi^{\ddag},
\eneq
where $\phi^{\ddag},\,\psi^{\ddag}: U(M_{\infty}(C)/\overline{DU(M_{\infty}(C))}\to
U(M_{\infty}(A))/{\overline{DU(M_{\infty}(A))}}$ are induced \hm s and
$DU(M_{\infty}(C))$ and $DU(M_{\infty}(A))$ are commutator subgroups of $
\cup_{k=1}^{\infty}U(M_k(C))$ and $\cup_{k=1}^{\infty}U(M_k(A)),$ respectively.

These results play important roles in the recent progress of the Elliott program of the classification of amenable \CA s. It is natural to ask whether approximate
unitary equivalence is the same as asymptotic unitary equivalence.
It turns out, from a result of Kishimoto and Kumjian (\cite{KK}), that, in general, asymptotic unitary equivalence is different from approximate unitary equivalence. In particular, they studied the case that both $A$ and $C$ are unital simple A$\T$-algebras of real rank zero.

Then, in \cite{Lnhomtp}, the following criterion for asymptotical unitarily equivalent was developed for any unital AH-algebra $C$ and any simple C*-algebra $A$ with tracial rank zero:  Suppose that $\phi, \psi: C\to A$ are two unital monomorphisms. Then
$\phi$ and $\psi$ are asymptotically unitarily equivalent if and only
if
\beq\nonumber
[\phi]=[\psi]\,\,\, {\rm in}\,\,\, KK(C,A),\\
\tau\circ \phi=\tau\circ \psi\tforal \tau\in T(A),\\
\overline{R_{\phi, \psi}}=0,
\eneq
where $R_{\phi, \psi}$ is the rotation map, which will be defined in \ref{Rot-M}.

It { is} worth to point out that one application of this result is to the study of Voiculescu's AF-embedding
problem: Let $\Omega$ be a compact metric space and let $G$ be a finitely generated abelian group. Suppose that $\Lambda$ is a $G$ action on $X.$ Then the above mentioned result can be used to prove that $C(\Omega)\rtimes_\Lambda G$ can be embedded into a unital simple AF-algebra if and only if $\Omega$ has a faithful $\Lambda$-invariant Borel probability measure.

There are other applications.  With a method developed by Winter (\cite{Winter-Z}),
the above mentioned asymptotic unitary equivalence result was also used to give
an important advance in the Elliott program (see \cite{Winter-Z}, \cite{Lin-App} and \cite{L-N}) for the C*-algebras which might be projectionless.  An even further advance was made which allows the class of unital separable amenable simple \CA s classified by the conventional
Elliott invariant to include \CA s which are so-called rationally finite tracial rank and their $K_0$-groups may not have the Riesz interpolation property.  The technical key of this advance was the following asymptotic unitary equivalence theorem.
\begin{thm}[Theorem 7.2, \cite{Lnclasn}]\nonumber
Let $C$ be a unital simple AH-algebra of slow dimension growth and let $A$ be any unital simple \CA\, with tracial rank at most one. Suppose that $\phi, \psi: C\to A$ are two unital monomorphisms. Then
$\phi$ and $\psi$ are asymptotically unitarily equivalent if and only if
\beq\nonumber
[\phi]=[\psi]\,\,\, {\rm in}\,\,\, KK(C,A),\\\nonumber
\tau\circ \phi=\tau\circ \psi\tforal \tau\in T(A),\\\nonumber
\phi^{\ddag}=\psi^{\ddag},\andeqn\,
\overline{R_{\phi, \psi}}=0.
\eneq
\end{thm}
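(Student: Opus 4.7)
The plan is to prove the two directions separately: necessity is largely formal, while sufficiency is the substantive content.

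For necessity, assume $\{u_t\}_{t\in [0,1)}\subset A$ is a continuous path of unitaries with $u_t^*\phi(f)u_t\to \psi(f).$ After replacing $u_t$ by $u_0^*u_t,$ we may assume $u_0=1,$ so the map $t\mapsto u_t^*\phi(\cdot)u_t$ is a norm-continuous path of \hm s connecting $\phi$ to $\psi,$ giving $[\phi]=[\psi]$ in $KK(C,A).$ Norm continuity of tracial states yields $\tau\circ\phi=\tau\circ\psi$ for all $\tau\in T(A).$ Passing to $U(M_\infty(A))/\overline{DU(M_\infty(A))}$ is norm-continuous, so $\phi^{\ddag}=\psi^{\ddag}.$ Vanishing of $\overline{R_{\phi,\psi}}$ is by definition the obstruction measured by the mapping torus associated with $(\phi,\psi)$: the existence of the path trivializes this obstruction.

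For sufficiency, first apply the approximate unitary equivalence theorem of \cite{Lin-AU11}: since $[\phi]=[\psi]$ in $KK$ implies equality in $KL,$ and the tracial and $\phi^{\ddag}$ invariants agree, there is a sequence $\{v_n\}\subset A$ of unitaries with $v_n^*\phi(f)v_n\to \psi(f)$ for all $f\in C.$ Choose an increasing sequence $\F_n\subset C$ of finite subsets with dense union and arrange that $\|v_n^*\phi(f)v_n-\psi(f)\|<2^{-n}$ for $f\in \F_n.$ Setting $w_n:=v_n^*v_{n+1},$ each $w_n$ almost commutes with $\phi(\F_n)$ with error $O(2^{-n}).$ It then suffices to connect each $w_n$ to $1$ by a continuous path $\{w_n(t)\}_{t\in [0,1]}$ of unitaries in $A$ that almost commutes with $\phi(\F_n)$ on $\F_n$ and has exponential length tending to zero; concatenating the paths $t\mapsto v_n w_n(t)$ on successive subintervals $[1-2^{-n},1-2^{-(n+1)})$ of $[0,1)$ then gives the desired $\{u_t\}.$

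The main obstacle is the construction of these short interpolating paths, which is a basic-homotopy-lemma-type statement in the setting of tracial rank at most one: a unitary almost commuting with $\phi(\F)$ can be contracted to $1$ by a short path in the approximate commutant provided a certain Bott / rotation class vanishes. The passage from approximate to asymptotic equivalence corresponds, at the level of invariants, to strengthening $KL$-equivalence to $KK$-equivalence, and this strengthening is governed precisely by $R_{\phi,\psi};$ the hypothesis $\overline{R_{\phi,\psi}}=0$ therefore kills the relevant Bott obstruction on the $w_n.$ The slow dimension growth assumption on $C$ is used to approximate $C$ by subhomogeneous building blocks with controlled $K$-theory so that the homotopy lemma can be applied locally, while the tracial rank at most one assumption on $A$ supplies the interior approximation machinery needed to produce unitaries of prescribed short exponential length inside $A.$ Combining these with a careful choice of subsequence yields summable exponential length estimates, after which concatenation produces the continuous path $\{u_t\}_{t\in [0,1)}$ realizing the asymptotic unitary equivalence.
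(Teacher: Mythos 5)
Your high-level outline is the right one and matches the architecture of the paper's proof of the more general Theorem \ref{MT1}: get a sequence $\{v_n\}$ implementing approximate unitary equivalence, look at the ``tails'' $w_n = v_n^*v_{n+1}$, use a basic homotopy lemma to contract each $w_n$ to $1$ inside the approximate commutant of $\phi(\F_n)$, and concatenate. Necessity is also handled the same way. But there are two genuine gaps in the sufficiency direction, and they are precisely the places where the tracial rank one setting is hard.

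First, you treat the obstruction to contracting $w_n$ as if it were purely a Bott-class obstruction, as in the tracial rank zero case. In the $TR(A)\le 1$ setting, the basic homotopy lemma (Theorem \ref{hmtp-H} in the paper) carries an additional hypothesis: beyond $\mathrm{Bott}(\phi,u)|_{\mathcal P}=0$, one must verify that certain unitaries of the form $\langle(\mathbf 1_n-\phi(p_i)+\phi(p_i)\tilde u)(\mathbf 1_n-\phi(q_i)+\phi(q_i)\tilde u^*)\rangle$ are, to within $\gamma$, in $CU(M_n(A))$. This determinant/commutator obstruction has no analogue at $TR=0$ and is exactly where the hypothesis $\phi^{\ddag}=\psi^{\ddag}$ does its real work; yet in your sufficiency argument $\phi^{\ddag}=\psi^{\ddag}$ only enters via the cited approximate uniqueness theorem, and you never engage with the $CU$ obstruction on $w_n$. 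The paper's proof devotes most of its length to this point: it constructs correcting unitaries $s_n$ by Theorem \ref{BB-exi} (the ``Bott map with prescribed $CU$-value'' existence theorem) so that $u_n:=u_n's_n^*$ satisfies both the Bott vanishing \emph{and} the $CU$ estimate needed by the homotopy lemma. Omitting this step leaves the argument incomplete even if everything else were right.

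Second, you assert that the hypothesis $\overline{R_{\phi,\psi}}=0$ ``kills the relevant Bott obstruction on the $w_n$,'' but you never explain how the mere existence of $v_n$ from approximate unitary equivalence lets you conclude $\mathrm{Bott}(\psi, v_n^*v_{n+1})=0$, and in fact it does not. A priori the $v_n$ supplied by \cite{Lin-AU11} carry an uncontrolled Bott defect. The paper instead uses the sharpened Lemma \ref{asy-01-tr0}, which produces $v_n$ already satisfying $\rho_A(\mathrm{bott}_1(\phi_2,v_n^*v_{n+1})(x))=0$ on larger and larger finite pieces of $K_1(C)$, and then builds a second correction $w_n'$ (using Theorem \ref{B1B2-alg} and a telescoping $\xi_n = \kappa_n - \kappa_{n+1}$ computation on the mapping-torus $K$-theory) to kill the Bott class outright. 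The hypothesis $\overline{R_{\phi,\psi}}=0$ is what licenses the choice of a lifting $\theta$ with $R_{\phi,\psi}\circ\theta=0$, which in turn makes the $\xi_n$-telescoping and the $\tilde\kappa_n$ smallness estimate (via $\delta_{n+1}$) work; it is not a one-line ``kills Bott.'' Finally, a minor point: you do not need ``exponential length tending to zero'' on the interpolating paths, only that the commutator error with $\phi(\F_n)$ tends to zero --- the concatenated path $u(t+n-1)=u_nz_{n+1}(t)$ is continuous without any length control.

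In short: correct skeleton, but the two substantive technical mechanisms specific to $TR\le 1$ --- forcing the $CU$-obstruction of $w_n$ to vanish via \ref{BB-exi}, and forcing the Bott obstruction to vanish via \ref{asy-01-tr0}/\ref{B1B2-alg} --- are missing, and these constitute most of the actual proof.
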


However, while in \cite{Lnclasn}, the theorem above also was proved for certain non-simple AH-algebras, it  only includes those unital AH-algebras whose K-theory behave as low dimensional topological spaces.
In this paper we will generalize the theorem above so that it will apply to all unital AH-algebras (with no restriction on dimension growth). {In particular, it holds for $C=C(X)$ for any compact metric space $X.$}

Moreover, in the case that $K_1(C)$ is finitely generated, we also find that the invariant
could be simplified. In fact, in Theorem \ref{MT2} below, the conditions that $\overline{R_{\phi, \psi}}=0$ and $\phi^{\ddag}=\psi^{\ddag}$  can be simplified to the condition
that $\phi^{\dag}=\psi^{\dag},$ i.e.,
$\phi$ and $\psi$ induce the same \hm s on
$\cup_{k=1}^{\infty}U(M_k(C))/DU(M_{\infty}(C))$.
However, we also point out that, in general, this simplification is not possible.  A specific example will be presented.

{\bf Acknowledgement}
Most part of this work was done when both authors
were in East China Normal University during 2012 Operator Algebra Program. The research of Z.~N.~is supported by an NSERC Discovery grant.

\section{Preliminaries}

\begin{NN}\label{NN1}
{\rm Let $A$ be a unital stably finite \CA. Denote by $\mathrm{T}(A)$ the simplex of tracial
states of $A$ and denote by $\textrm{Aff(T}(A))$ the space of all real
affine continuous functions on $\mathrm{T}(A).$ Suppose that $\tau\in \mathrm{T}(A)$ is a
tracial state. We will also {denote by} $\tau$ the trace $\tau\otimes
\mathrm{Tr}$ on $\textrm{M}_k(A)=A\otimes \textrm{M}_k(\C)$ (for every integer $k\ge 1$), where
$\mathrm{Tr}$ is the standard trace on $\mathrm{M}_k(\C).$


Denote by $\textrm{M}_{\infty}(A)$ the set $\displaystyle{\bigcup_{k=1}^{\infty}\textrm{M}_k(A)},$ where $\textrm{M}_k(A)$ is regarded as a C*-subalgebra of $\textrm{M}_{k+1}(A)$ by the embedding $\textrm{M}_k(A)\ni a\mapsto \left(\begin{array}{cc}a&0\\0&0\end{array}\right)\in \textrm{M}_{k+1}(A).$

{For any projection $p\in \mathrm{M}_\infty(A)$,
the evaluation $\tau\mapsto\tau(p)$ defines a positive affine function on
$\mathrm{T}(A)$. This induces a canonical positive homomorphism
$\rho_A: K_0(A)\to \textrm{Aff(T}(A))$.}

Denote by $\mathrm{S}(A):=C_0((0, 1))\otimes A$ the suspension of $A$, denote by $\mathrm{U}(A)$ the unitary group of $A$, and denote by $\mathrm{U}(A)_0$ the connected component of $\mathrm{U}(A)$ containing the identity.

{Let} $C$ be another unital \CA\, and {let}  $\phi: C\to A$ be a unital *-homomorphism. Denote by $\phi_{\mathrm{T}}: \mathrm{T}(A)\to \mathrm{T}(C)$
the continuous affine map induced by $\phi,$ i.e., $$\phi_\mathrm{T}(\tau)(c)=\tau\circ \phi(c)$$ for all $c\in C$ and $\tau\in \mathrm{T}(A).$ Denote by $\phi_\sharp:\mathrm{Aff}(\mathrm{T}(C))\to \mathrm{Aff}(\mathrm{T}(A))$ {the map defined} by $$\phi_\sharp(f)(\tau)=f(\phi_{\mathrm{T}}(\tau))$$ for all $\tau\in\mathrm{T}(A)$.
}
\end{NN}

\begin{df}\label{ddag}
{\rm Let $A$ be a unital \CA.
Denote by $DU(A)$ the subgroup of generated by the commutators of $U(A)$ and
denote by $CU(A)$ the closure of $DU(A).$   If $u\in U(A),$ {its} image in {the quotient} $U(A)/CU(A)$ will be denoted by $\overline{u}.$

Let $B$ be another unital \CA\, and let $\phi: A\to B$ be a unital \hm.
{It} is clear that $\phi$ maps $CU(A)$ into $CU(B).$ Let $\phi^{\ddag}$ denote the} induced \hm\ from $U(A)/CU(A)$
into $U(B)/CU(B)$.  It is also clear that $\phi$ maps $DU(A)$ into $DU(B).$ Denote by $\phi^{\dag}: U(A)/DU(A)\to U(B)/DU(B)$ the \hm\, induced by
$\phi.$

Let $n\ge 1$ be any integer. Denote by $U_n(A)$ the unitary group
of $M_n(A),$ and denote by $DU_n(A)$ and $CU_n(A)$ the commutator
subgroup of $U_n(A)$ and its closure, respectively.
 {Regard} $U_n(A)$ as a subgroup of $U_{n+1}(A)$ via the embedding 
{$\textrm{U}_k(A)\ni u\mapsto \left(\begin{array}{cc}u&0\\0&1\end{array}\right)\in \textrm{U}_{k+1}(A),$}and denote by $U_{\infty}(A)$ the union
of {all} $U_n(A).$

Consider the union $CU_\infty(A):=\bigcup_n CU_n(A)$. It is then a normal subgroup of $U_\infty(A)$, and the quotient $U(A)_{\infty}/CU_{\infty}(A)$ is in fact isomorphic to the inductive limit
of $U_n(A)/CU_n(A)$ (as abelian groups).
Similarly, $DU_{\infty}(A):=\bigcup_n DU(A)_n$ is a normal subgroup  of
$U_{\infty}(A).$
We will use $\phi^{\ddag}$ for the \hm\ {induced by $\phi$} from
$U_{\infty}(A)/CU_{\infty}(A)$ into $U_{\infty}(B)/CU_{\infty}(B)$, and
we will use $\phi^{\dag}$ for the \hm\, induced by $\phi$ from
$U_{\infty}(A)/DU_\infty (A).$

\end{df}

\begin{rem}\label{alg-iso}
By Corollary 3.5 of \cite{Lin-hmtp}, {if $A$ has tracial rank at most one (see \ref{TA1} below)}, the map natural map $$U(A)/CU(A)\to U(M_n(A))/CU(M_n(A))$$ is an isomorphism for any integer $n\geq 1$.
\end{rem}

\begin{df}\label{DDet}
Let $A$ be a unital \CA, and {let} $u\in U(A)_0.$ {Let} $u(t)\in C([0,1],A)$ be
a piecewise-smooth path of unitaries such that $u(0)=u$ and $u(1)=1.$
Then the de la Harpe--Skandalis determinant of $u(t)$ is defined by
$$
{\mathrm{Det}}(u(t))(\tau)=\frac{1}{2\pi i}\int_0^1\tau({{d}u(t)\over{{d}t}}u(t)^*) {d}t \tforal \tau\in T(A),
$$
{{which induces}}
 a \hm\, $${\mathrm{\overline{Det}}:U(A)_0\to \Aff(T(A))/\overline{\rho_A(K_0(A))}}.$$

{The determinant ${\overline{{\mathrm{Det}}}}$ can be extended to a map from} $U_{\infty}(A)_0$ into $\Aff(T(A))/\overline{\rho_A(K_0(A))}.$
{It is easy to see that the determinant} vanishes on the {closure} of commutator subgroup
of $U_{\infty}(A).$ {In fact, by 3.1 of \cite{Thomsen-rims},
the closure of the commutator subgroup is exactly the kernel of this map,
that is, it induces an isomorphism} $\overline{{\mathrm{Det}}}:
U_{\infty}(A)_0/CU_{\infty}(A)\to \Aff(T(A))/\overline{\rho_A(K_0(A))}.$
{Moreover, by (\cite{Thomsen-rims}),
one has the following short exact sequence}
\beq\label{Texact}
 0\to \Aff(T(A))/\overline{\rho_A(K_0(A))}{\to} U_{\infty}(A)/CU_{\infty}(A){\stackrel{\varPi}{\to}} K_1(A)\to 0
\eneq
which splits ({where the embedding of $\Aff(T(A))/\overline{\rho_A(K_0(A))}$ induced by $(\overline{{\mathrm{Det}}})^{-1}$}). We will fix a splitting map $s_1: K_1(A)\to U_{\infty}(A)/CU_{\infty}(A).$  {The notation $\varPi$ and} $s_1$
will be used late without further warning.
For each $\bar{u}\in s_1(K_1(A))$, select and fix one element $u_c\in\bigcup_{n=1}^\infty M_n(A)$ such that $\overline{u_c}=\bar{u}$. Denote this set by $U_c(A)$.
Moreover, in the case that {$A$ is unital, simple and} $TR(A)\leq 1$ (see \ref{TA1} below), one has that $U(A)/U_0(A)$ to $K_1(A)$ is an isomorphism and $\overline{\mathrm{Det}}: U_0(A)/CU(A)\to \Aff(T(A))/\overline{\rho_A(K_0(A))}$ is also an isomorphism. Then one has  
\beq\label{Texact-2}
 0\to \Aff(T(A))/\overline{\rho_A(K_0(A))}\to U(A)/CU(A)\to K_1(A)\to 0.
\eneq
\end{df}



%

\begin{df}\label{DKL}

Let $A$ be a  unital \CA\, and let $C$ be a  separable \CA\, which satisfies the Universal Coefficient Theorem.
{Recall that $KL(C, A)$ is the quotient of $KK(C, A)$ modulo pure extensions.}  By {a} result of D{\u a}d{\u a}rlat and Loring in \cite{DL}, {one has}
\beq\label{N2-1}
{KL}(C,A)=\mathrm{Hom}_{\Lambda}(\underline{{K}}(C), \underline{{K}}(A)),
\eneq
where
$$
\underline{{K}}(B)=({K}_0(B)\oplus{K}_1(B))\oplus( \bigoplus_{n=2}^{\infty}({K}_0(B,\Z/n\Z)\oplus{K}_1(B,\Z/n\Z) ) )
$$
for any \CA\, $B.$ Then, in the rest of the paper,
we will identify $KL(C, A)$ with
$\mathrm{Hom}_{\Lambda}(\underline{{K}}(C), \underline{{K}}(A))$.
Let $\kappa\in KL(C,A).$
Denote by $\kappa_i: K_i(C)\to K_i(A)$ the \hm\,
given by $\kappa$ with ${i=0,1.}$
\end{df}




\begin{df}\label{TA1}
Let $k\ge 0$ be an integer.
A unital simple C*-algebra $A$ has tracial rank at most $k,$
denoted by $\mathrm{TR}(A)\leq k$, if for any finite subset
$\mathcal F\subset A$, any $\ep>0$, and nonzero $a\in A^+$,
there exist {a} nonzero projection $p\in A$ and a C*-subalgebra
{$I\cong \bigoplus_{i=1}^m C(X_i)\otimes M_{r(i)}$}
with $1_I=p$ for some finite CW-complexes $X_i$ with dimension at most  $k$
such that
\begin{enumerate}
\item $\|xp-xp\|<\ep$ for any $x\in\mathcal F$,
\item for any $x\in\mathcal F$, there is $x'\in I$ such that $\|pxp-x'\|
    \leq\ep$, and
\item $1-p$ is Murray-von Neumann equivalent to a projection in $\overline{aAa}$.
    \end{enumerate}

{Moreover}, if {the C*-subalgebra} $I$ {above} can be chosen to be a finite dimensional \CA,
then $A$ is said to have tracial rank zero, and in such case, we write
$\mathrm{TR}(A)=0.$ It is a theorem of Guihua Gong \cite{Gong-AH} that every unital simple AH-algebra
with no dimension growth has tracial rank  at most one.
It has been proved in \cite{Lnclasn} that every ${\cal Z}$-stable unital simple AH-algebra
has tracial rank at most one.
It is shown recently (\cite{Lin-LAH}) that if a unital separable
simple \CA\, $A$ satisfying the UCT has $TR(A)\le k,$ then $TR(A)\le 1$.
\end{df}

\begin{df}\label{D-Maf}
{\rm

Let $A$ and $B$ be two unital C*-algebras, and let $\psi$ and $\phi$ be two
unital monomorphisms from $B$ to $A$. Then the mapping torus $M_{\phi, \psi}$
is the C*-algebra defined by $$M_{\phi, \psi}:=\{f\in\textrm{C}([0, 1], A);\ f(0)=\phi(b)\ \textrm{and}\ f(1)=\psi(b)\ \textrm{for some}\ b\in B\}.$$
%
%
For any $\psi, \phi\in\mathrm{Hom}(B, A)$, denoting by $\pi_0$ the evaluation of $M_{\phi, \psi}$ at $0$, we have the short exact sequence
\begin{displaymath}
0\to \mathrm{S}(A) \to M_{\phi,\psi}
\to^{\pi_0} B \to 0.
\end{displaymath}
If {$\phi_{*i}=\psi_{*i}$ ($i=0,1$),}
then the corresponding six-term exact sequence breaks down to the following two extensions:
\begin{displaymath}
\eta_{{i}}(M_{\phi, \psi}):\
0\to K_{i+1}(A)\to  K_{{i}}(M_{\phi, \psi})\to  K_{i}(B)\to 0\\
\,\,\,{(i=0,1).}
\end{displaymath}
}
\end{df}

\begin{NN}\label{Rot-M}
{\rm
Suppose that, in addition,
\beq\label{Dr-2}
\tau\circ \phi=\tau\circ \psi\tforal \tau\in \textrm{T}(A).
\eneq
For any {continuous}  piecewise smooth path of unitaries $u(t)\in M_{\phi, \psi}$, consider the path of unitaries $w(t)=u^*(0)u(t)$ in $A$. Then it is a continuous {and piecewise smooth}  path with $w(0)=1$ {and
$w(1)=u^*(0)u(1).$}  Denote by $R_{\phi, \psi}(u)=\mathrm{Det}(w)$ the determinant of $w(t)$. 
It is clear {{with the assumption of (\ref{Dr-2})}} that $R_{\phi, \psi}(u)$ depends only on the homotopy class of $u(t)$. Therefore, it induces a {\hm}, denoted by $R_{\phi, \psi}$, from $K_1(M_{\phi, \psi})$ to $\mathrm{Aff}(\mathrm{T}(A)).$
One has the following lemma.
}
\end{NN}


%

\begin{lem}[3.3 of \cite{Lin-Asy}, also see \cite{KK}]\label{DrL} When
(\ref{Dr-2}) holds, the following diagram commutes:
$$
\begin{array}{ccccc}
K_0(A) && \stackrel{[\imath]_1}{\longrightarrow} && K_1(M_{\phi, \psi})\\
& \rho_A\searrow && \swarrow R_{\phi,\psi} \\
& & \mathrm{Aff(T}(A)) \\
\end{array}
$$
\end{lem}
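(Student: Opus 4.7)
\medskip

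\noindent\textbf{Proof proposal.} The plan is to trace an explicit generator of $K_0(A)$ through the map $[\imath]_1$, obtain a concrete unitary loop in $M_{\phi,\psi}$, and evaluate $R_{\phi,\psi}$ on it by a direct de la Harpe--Skandalis computation.

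First, I would fix a projection $p\in M_k(A)$ representing a class $[p]\in K_0(A)$. Under the standard Bott-type isomorphism $K_0(A)\cong K_1(\mathrm{S}(A))$, the class $[p]$ corresponds to the unitary
$$
u(t):=e^{2\pi i t}p+(1-p)\in M_k(\mathrm{S}(A))^{\sim},\qquad t\in[0,1].
$$
Because $u(0)=u(1)=1$, this path, viewed inside $M_k(C([0,1],A))$, actually lies in $M_k(M_{\phi,\psi})$ itself (the boundary values match $\phi(1_B)=\psi(1_B)=1_A$, so no unitization is needed). Thus $[\imath]_1([p])=[u]\in K_1(M_{\phi,\psi})$; this step just unwinds the definition of the connecting map in the exact sequence of \ref{D-Maf} and the naturality of suspension.

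Next, I would apply the definition of $R_{\phi,\psi}$ in \ref{Rot-M} to this representative. Since $u(0)=1$, we have $w(t):=u(0)^*u(t)=u(t)$, and so for any $\tau\in \mathrm{T}(A)$,
$$
R_{\phi,\psi}([u])(\tau)=\mathrm{Det}(w)(\tau)=\frac{1}{2\pi i}\int_0^1 \tau\!\left(\frac{du}{dt}\,u(t)^*\right)dt.
$$
A direct computation gives $\frac{du}{dt}=2\pi i\,e^{2\pi i t}p$ and $u(t)^*=e^{-2\pi i t}p+(1-p)$, whence $\frac{du}{dt}\,u(t)^*=2\pi i\,p$ (using $p(1-p)=0$). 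Therefore $\tau\!\left(\frac{du}{dt}u(t)^*\right)=2\pi i\,\tau(p)$, and the integral evaluates to $\tau(p)=\rho_A([p])(\tau)$.

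Finally, I would verify that this calculation is compatible with stabilization (so the formula gives a well-defined homomorphism on $K_0(A)$ rather than only on projections in a fixed $M_k(A)$) and independent of the choice of piecewise-smooth representative within the homotopy class, both of which are already built into the definition of $R_{\phi,\psi}$ in \ref{Rot-M}. There is no real obstacle here; the main step is the identification of $[\imath]_1([p])$ with the explicit loop $u(t)$ above, after which the commutativity of the diagram reduces to the one-line computation in the previous paragraph.
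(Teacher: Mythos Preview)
Your argument is correct and is exactly the standard computation (as in 3.3 of \cite{Lin-Asy} and \cite{KK}); the paper does not supply its own proof here but simply cites those references, so there is nothing further to compare. The only cosmetic point is that your remark about ``no unitization being needed'' is really the observation that the loop $u(t)=e^{2\pi i t}p+(1-p)$ has $u(0)=u(1)=1_{M_k(A)}=(\phi\otimes\mathrm{id}_{M_k})(1)=(\psi\otimes\mathrm{id}_{M_k})(1)$, hence lies in $M_k(M_{\phi,\psi})$ with fibre $1_{M_k(B)}$; with that said, everything goes through.
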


\begin{df}\label{R0}
Fix two unital C*-algebras $A$ and $B$ with $\tr(A)\neq\O$. {Define} $\mathcal R_0$ to be the subset of $\mathrm{Hom}(K_1(B), \Aff(T(A)))$ consisting of those homomorphisms $h\in \mathrm{Hom}(K_1(B), \Aff(T(A)))$
{for which}  there exists a homomorphism $d: K_1(B)\to K_0(A)$ such that $$h=\rho_A\circ d.$$ It is clear that $\mathcal R_0$ is a subgroup of $\mathrm{Hom}(K_1(B), \Aff(T(A)))$.
\end{df}

\begin{NN}\label{R-bar}
{\rm
If $[\phi]=[\psi]$ in $KK(B, A)$, {then} the exact sequences $\eta_i(M_{\phi, \psi})$ ($i=0, 1$) split. In particular, there is a lifting $\theta: K_1(B)\to K_1(M_{{\phi, \psi}})$. Consider the map $$R_{\phi, \psi}\circ\theta: K_1(B)\to\Aff(T(A)).$$
If a different lifting $\theta'$ is chosen, then,
$\theta-\theta'$ maps  $K_1(B)$ into  $K_0(A).$ Therefore  $$R_{\phi, \psi}\circ\theta-R_{\phi, \psi}\circ\theta'\in \mathcal{R}_0.$$
Then {define} $$\overline{R}_{\phi, \psi}=[R_{\phi, \psi}\circ\theta]\in \mathrm{Hom}(K_1(B), \Aff(T(A)))/\mathcal R_0.$$
%
%
See 3.4 of \cite{Lnclasn} for more details.
}
\end{NN}

\section{A basic homotopy lemma}
The following is taken from Lemma 2.8 of \cite{Lin-hmtp}.
\begin{lem}\label{pert-comm-lem}
Let $C$ be a unital nuclear C*-algebra. Let $\mathcal F\subseteq C$ be a finite subset, $N\in\mathbb{N}$, and $\ep>0$. There then exist a finite subset $\mathcal G\subseteq C$ and $\delta>0$ such that for any unital C*-algebra $A$, any unitary $u\in A$ and any unital homomorphism $\phi: C\to A$ with $$\|[\phi(c), u]\|<\delta,\quad\forall c\in\mathcal G,$$ there is a unital completely positive linear map $L: C\otimes\mathrm{C}(\mathbb T)\to A$ such that
$$\|L(f\otimes z^n)-\phi(f)u^n\|<\ep
, \quad\forall f\in\mathcal F,\ -N\leq n\leq N.$$
\end{lem}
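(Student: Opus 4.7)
The plan is to exploit nuclearity of $C$ in order to upgrade an explicit approximately multiplicative candidate map to a genuine u.c.p.\ map. On the algebraic tensor product $C \odot C(\T)$, define $\Phi_0$ on elementary tensors by $\Phi_0(c \otimes z^n) := \phi(c) u^n$ and extend linearly; since $C$ is nuclear, $\Phi_0$ extends to a unital, $*$-preserving linear contraction on $C \otimes C(\T)$. A direct computation gives
\[
\Phi_0\bigl((c_1 \otimes z^{n_1})(c_2 \otimes z^{n_2})\bigr) - \Phi_0(c_1 \otimes z^{n_1})\,\Phi_0(c_2 \otimes z^{n_2}) = \phi(c_1)\,[u^{n_1}, \phi(c_2)]\,u^{n_2},
\]
and the telescoping identity $[u^n, x] = \sum_{k=0}^{n-1} u^{n-1-k}[u,x] u^k$ gives the bound $\|[u^n, \phi(c)]\| \leq |n| \cdot \|[u, \phi(c)]\|$. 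Thus, for $\mathcal{G}$ chosen to include $\mathcal{F}$ along with enough of its products and adjoints, and $\dt$ sufficiently small, $\Phi_0$ is $\eta$-multiplicative on the finite set $\{c \otimes z^n : c \in \mathcal{F},\ |n|\leq N\}$ for any prescribed $\eta > 0$.

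Next, invoke the completely positive approximation property of the nuclear algebra $C$: there exist u.c.p.\ maps $\alpha : C \to F$ and $\beta : F \to C$, with $F$ finite dimensional, such that $\beta \circ \alpha$ is close to the identity of $C$ on any prescribed finite subset. This reduces the problem to constructing a u.c.p.\ map $L_0 : F \otimes C(\T) \to A$ whose values on $\alpha(c) \otimes z^n$ approximate $\phi(c) u^n$, after which one sets $L := L_0 \circ (\alpha \otimes \mathrm{id}_{C(\T)})$. For the finite-dimensional case, Stinespring-dilate the u.c.p.\ map $\phi \circ \beta : F \to A$ to a $*$-homomorphism $\pi : F \to M_r(A)$ for some $r$; then, using semiprojectivity of $C(\T)$ together with finite-dimensionality of $F$, perturb the canonical lift of $u$ in $M_r(A)$ to a unitary $v$ commuting exactly with $\pi(F)$ and whose compression to $A$ approximates $u$ in norm. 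The $*$-homomorphism $c \otimes g \mapsto \pi(c) g(v)$ from $F \otimes C(\T)$ to $M_r(A)$ then compresses to the desired $L_0$.

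The main obstacle is the perturbation step producing $v$: one needs quantitative bounds that translate smallness of $\|[u, \phi(c)]\|$ for $c \in \mathcal{G}$ into a small-norm adjustment achieving exact commutation with $\pi(F)$, while keeping the compression of $v$ close to $u$. Finite-dimensionality of $F$ is what makes this bookkeeping tractable, since only finitely many matrix units need be controlled and all the intermediate estimates depend only on $\dim F$, $\dt$, $\mathcal{G}$, and $N$.
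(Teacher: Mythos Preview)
The paper does not prove this lemma; it merely imports it as Lemma~2.8 of \cite{Lin-hmtp}. The argument there (and the standard one for statements of this shape) goes by contradiction: assuming failure, build a sequence $(A_n,\phi_n,u_n)$ with commutators tending to zero but admitting no good $L$, pass to $\prod A_n/\bigoplus A_n$ where the images commute exactly, obtain a unital $*$-homomorphism $C\otimes C(\T)\to \prod A_n/\bigoplus A_n$, and lift it to a u.c.p.\ map into $\prod A_n$ via the Choi--Effros theorem (this is where nuclearity of $C$, hence of $C\otimes C(\T)$, enters). The components for large $n$ give the contradiction. This bypasses any explicit dilation or perturbation.

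Your constructive route is different and can be made to work, but as written it has two defects. The minor one: the assertion that ``since $C$ is nuclear, $\Phi_0$ extends to a unital, $*$-preserving linear contraction on $C\otimes C(\T)$'' is unjustified---nuclearity equates the minimal and maximal tensor norms, but it does not force the bilinear map $(c,g)\mapsto\phi(c)g(u)$ to be bounded when $\phi(C)$ and $u$ do not commute. You never use this extension, so simply delete the claim. The substantive gap is in the perturbation step. To perturb a lift of $u$ to a unitary commuting with $\pi(F)$ you must first know that the lift \emph{approximately} commutes with $\pi(F)$; for the obvious lift $u\otimes 1_r$ this means $u$ must approximately commute with every matrix entry of each $\pi(x)$, and an abstract Stinespring dilation gives no such control. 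What rescues the scheme is that for finite-dimensional $F$ one may take the explicit dilation built from the square root of the Choi matrix of $\phi\circ\beta$, whose entries then lie in the $C^*$-algebra generated by $\{\phi(\beta(e_{ij}))\}$; one must enlarge $\mathcal G$ to include all $\beta(e_{ij})$ and use a uniform polynomial approximation of $t\mapsto t^{1/2}$ on $[0,1]$ to propagate the commutator bound to those entries. None of this is in your sketch, and without it the ``finite-dimensional bookkeeping'' you invoke does not get off the ground.
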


Let $X$ be a metric space. In the rest of the paper, we fix the metric on $X\times \T$
to be
$$
{\mathrm{dist}}((x,t), (y,s))=\sqrt{{\mathrm{dist}}(x,y)^2+{\mathrm{dist}}(t,s)^2},\quad\forall x, y\in X,\ s, t\in\mathbb T.
$$

\begin{df}[5.2 of \cite{LinTAI}]
Recall that
a unital simple C*-algebra $A$ is said to be tracially approximately divisible if for any finite subset $\F\subseteq A$, any $\ep>0$, any natural number $N$, and any $a\in A^+$, there is a C*-subalgebra $B\subseteq A$ with $B\cong M_k(\C)$ for some $k\geq N$ such that if $p=1_B$, then
\begin{enumerate}
\item[(1)] $\F\subseteq_\ep B'\cap A$, and
\item[(2)] $1-p$ is Murray-von Neumann equivalent to a projection in $\overline{aAa}$,
\end{enumerate}
where $B'\cap A$ is the relative commutant of $B$ in $A$.
\end{df}

\begin{rem}
The definition above is slightly different---but equivalent---to the original definition in \cite{LinTAI}, in which the first condition is replaced by
\begin{enumerate}
\item[(1')] $\|cf-fc\|<\ep$ for any $f\in\F$ and any $c$ in the unit ball of $B$.
\end{enumerate}
Indeed, as in \cite{BKR-ADiv}, for any finite dimensional C*-algebra $B\subseteq A$, one considers  the conditional expectation
$$\mathbb{E}_B: A\ni a\mapsto \int_{U(B)}u^*au d\mu,$$ where $\mu$ is the Haar measure on the unitary group $U(B)$. It is clear that $\mathbb{E}_B(a)$ commutes with $B$.
Now, if $f\in A$ satisfies $\|fc-cf\|<\ep$ for any $c$ in the unit ball of $B$, one has that $$\|\mathbb{E}_B(f)-f\|<\ep.$$ In particular, this implies that $f\in_\ep B'\cap A$, and shows that the two definitions of tracially approximate divisibility are equivalent.
\end{rem}

Similar to \cite{Lin-hmtp}, for any nondecreasing function $\Delta: (0, 1)\to(0, 1)$ with $\lim_{t\to 0}\Delta(t)=0$, define
$$\Delta_{00}(t)=
\Delta(\frac{1}{2^{n+1}}),\quad \textrm{if $t\in [\frac{1}{2^{n+1}}, \frac{1}{2^{n}})$},
$$
and
$$ \Delta_{0}(t)=\frac{\sqrt{2}}{48}\Delta_0(t\sqrt{2}/6)t$$
Then $\Delta_{00}$ and $\Delta_0$ are also nondecreasing and satisfy $\lim_{t\to 0}\Delta_{00}(t)=0$, $\lim_{t\to 0}\Delta_0(t)=0$.

\begin{df}\label{mea}
Let $X$ be a compact metric space and $P\in M_r(C(X))$ be a projection, where $r\geq 1$ is an integer. Put $C=PM_r(C(X))P$. Suppose $\tau\in T(C)$. It is known that there exists a probability measure $\mu_\tau$ on $X$ such that
$$\tau(f)=\int_X t_x(f(x)) d\mu_\tau(x),$$
where $t_x$ is the normalized trace on $P(x)M_rP(x)$ for all $x\in X$.
\end{df}

\begin{rem}
Regard $C(X)$ as the center of $C=PM_r(C(X))P$, and denote by $\iota: C(X)\to C$ the embedding. Then the measure $\mu_\tau$ is in fact induced by the trace $\tau\circ\iota$ on $C(X)$.
\end{rem}

\begin{rem}
The C*-algebra $(PM_r(C(X))P)\otimes C(\mathbb T)$ is isomorphic to the homogeneous C*-algebra $\tilde{P}M_r(C(X\times\mathbb T))\tilde{P}$ with the projection $\tilde{P}$ given by $\tilde{P}(x, z)=P(x)$. Hence there is a natural embedding of $C(X\times\mathbb T)$ into $(PM_r(C(X))P)\otimes C(\mathbb T)$ as the center.
\end{rem}

\begin{lem}\label{pert-meas-lem}
Let $C=PM_r(C(X))P$ for some compact metrizable space $X$, and let $\Delta: (0, 1)\to(0, 1)$ be a non-decreasing function and
$1>\eta>0.$ 
Let $\mathcal F\subseteq C$, $\mathcal G'\subseteq C\otimes \mathrm{C}(\mathbb T)$, $\mathcal H\subseteq C\otimes \mathrm{C}(\mathbb T)$ be finite subsets, and let $\ep>0$. Then there are $\delta>0$ and a finite subset $\mathcal G\subseteq C$ such that for any C*-algebra $A$ which is tracially approximately divisible, any homomorphism $\phi: C\to A$, any unitary $u\in A$ with
\beq\nonumber
\|[\phi(c), u]\|<\delta\quad\forall c\in\mathcal G \tand\\
\mu_{\tau\circ\phi}(O_a)> \Delta(a) \tforal \tau\in T(A)
  \eneq
  and for any open ball $O_a$
of $X$ with radius $a>\eta$
there exist unitaries $w_1, w_2\in A$, a path of unitaries $\{w(t); t\in[0, 1]\}\subset A$ with $w(0)=1$ and $w(1)=w_1w_2w_1^*w_2^*=:w$, and a completely positive $\mathcal G'$-$\ep$-multiplicative linear maps $L_1, L_2: C\otimes\mathrm{C}(\mathbb T) \to A$  such that
\beq\label {pert-mea-cond-1}
&&\|[w_i, \phi(a)]\|<\ep \tforal a\in {\cal F},\,\,\,i=1, 2,\\\label
{pert-mea-cond0}
&&\|[w(t),\, u]\|<\ep,\,\,\, \|[w(t), \phi(a)]\|<\ep,\,\,\,\tforal a\in\F,\,\,\, \tand t\in[0, 1],\\\label{pert-mea-cond1}
&&\|L_1(a\otimes z)-(\phi(a)uw)\|<\ep,\quad \|L_1(a\otimes 1)-\phi(a)\|<\ep,\quad\tforal a\in\F,\\\label{pert-mea-cond2}  &&\|L_2(a\otimes z)-(\phi(a) w)\|<\ep,\quad \|L_2(a\otimes 1)-\phi(a)\|<\ep,\quad\tforal a\in\F,\\\label{pert-mea-cond3}
&&|\tau\circ L_1(g)-\tau\circ L_2(g)|<\ep,\quad\tforal g\in \mathcal H,\ \tforal \tau\in \mathrm{T}(A),
\eneq
and
$$\mu_{\tau\circ L_i}(B_a)>\Delta_0(a),\quad i=1, 2, \tforal \tau\in T(A)$$ and for any open ball $B_a$ of $X\times\mathbb T$ with radius $a>3\sqrt{2}\eta$.
\end{lem}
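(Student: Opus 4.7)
The idea is to exploit tracial approximate divisibility of $A$ to manufacture a matrix subalgebra $D \cong M_k(\mathbb{C})$ approximately centralizing $\phi(C) \cup \{u\}$, to choose $w_1, w_2 \in U(D)$ so that $w := w_1 w_2 w_1^* w_2^*$ has spectrum exactly the $k$-th roots of unity (hence equidistributed on $\mathbb{T}$), and to produce $L_1$, $L_2$ from the pairs $(\phi, uw)$ and $(\phi, w)$ via Lemma \ref{pert-comm-lem}. First apply Lemma \ref{pert-comm-lem} to $C$ with the finite set $\mathcal{F}_0 := \mathcal{F} \cup \{f \in C : f \otimes z^n \in \mathcal{G}' \cup \mathcal{H}\ \textrm{for some}\ n\}$ (enlarged so as to force $\mathcal{G}'$-$\epsilon$-multiplicativity by the standard product-approximation trick), an integer $N$ bounding all the $z$-degrees in $\mathcal{G}' \cup \mathcal{H}$, and tolerance $\epsilon/8$. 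This returns $\mathcal{F}_1 \supseteq \mathcal{F}_0$ and $\delta_1 > 0$ such that any unitary $v \in A$ with $\|[\phi(c), v]\| < \delta_1$ for $c \in \mathcal{F}_1$ yields a unital, $\mathcal{G}'$-$\epsilon/4$-multiplicative CP map $L : C \otimes C(\mathbb{T}) \to A$ satisfying $\|L(f \otimes z^n) - \phi(f) v^n\| < \epsilon/8$ for $f \in \mathcal{F}_0$, $|n| \le N$. Output $\delta := \delta_1 / 3$ and $\mathcal{G} := \mathcal{F}_1$.

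Given $(\phi, u)$ as in the hypothesis, invoke tracial approximate divisibility of $A$ for $\phi(\mathcal{F}_1) \cup \{u\}$, a tolerance $\delta_2 \ll \min(\delta_1, \epsilon)$, and a positive element of uniformly small trace; this provides $D \cong M_k(\mathbb{C}) \subseteq A$ with unit $p$, where $k$ is odd and as large as desired (depending on $\Delta, \eta, \epsilon, N$), and with $1-p$ of correspondingly small trace. Inside $D$ put $\zeta = e^{2\pi i/k}$ and $w_0 = \mathrm{diag}(1, \zeta, \ldots, \zeta^{k-1}) \in U(D)$; since $k$ is odd, $\det w_0 = \zeta^{k(k-1)/2} = 1$, so $w_0 \in SU(D)$ and Goto's theorem writes $w_0 = w_1' w_2' (w_1')^* (w_2')^*$ for some $w_1', w_2' \in U(D)$. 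Let $h := \mathrm{diag}(0, 2\pi/k, \ldots, 2\pi(k-1)/k) \in D_{\textrm{sa}}$, so that $w'(t) := \exp(ith) \in U(D)$ runs smoothly from $1_D$ to $w_0$. Extending by $1-p$ gives unitaries $w_i := w_i' + (1-p) \in U(A)$, their commutator $w := w_1 w_2 w_1^* w_2^* = w_0 + (1-p)$, and the path $w(t) := w'(t) + (1-p)$ from $1_A$ to $w$; approximate centrality of $D$ (controlled by $\delta_2$) yields (\ref{pert-mea-cond-1}) and (\ref{pert-mea-cond0}). Since $\|[\phi(c), uw]\| \le \|[\phi(c), u]\| + \|[\phi(c), w]\| < \delta + \delta_2 < \delta_1$, the first step applied with $v = uw$ (resp.\ $v = w$) produces $L_1$ (resp.\ $L_2$) satisfying (\ref{pert-mea-cond1})--(\ref{pert-mea-cond2}).

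The heart of the argument is the measure lower bound together with (\ref{pert-mea-cond3}), both of which rest on the approximate tensor splitting $A \approx M_k \otimes (D' \cap A)$: up to an error of order $\delta_2 + \sup_\tau \tau(1-p)$, every $\tau \in T(A)$ restricts to the normalized trace $\tau_{M_k}$ on $D$ and to some $\tau'$ on $D' \cap A$ with $\tau(xb) \approx \tau'(x)\tau_{M_k}(b)$ for $b \in D$ and $x$ almost commuting with $D$. Applied with $x = \phi(\chi_{O_{a'}})$ (a continuous approximate characteristic function) and $b = \chi_J(w)$ (and for $L_1$ after replacing $u$ by a nearby $u' \in D' \cap A$ so that $(uw)^n \approx (u')^n w^n$), together with the exact equidistribution $\tau_{M_k}(\chi_J(w)) = \#\{j : \zeta^j \in J\}/k \approx |J|/(2\pi)$, this gives
\[
\mu_{\tau \circ L_i}(O_{a'} \times J) \;\approx\; \mu_{\tau \circ \phi}(O_{a'}) \cdot |J|/(2\pi), \qquad i=1,2.
\]
Any ball in $X \times \mathbb{T}$ of radius $a > 3\sqrt{2}\eta$ contains $O_{a/\sqrt{2}} \times J_{a/\sqrt{2}}$, and the prefactors $\sqrt{2}/48$ and $\sqrt{2}/6$ appearing in $\Delta_0$ are chosen precisely so that $\Delta(a/\sqrt{2}) \cdot a/(2\pi\sqrt{2}) > \Delta_0(a)$. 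Condition (\ref{pert-mea-cond3}) drops out of the same factorization, since $\tau_{M_k}(w^n) = 0$ for $0 < |n| < k$ forces both $\tau \circ L_1$ and $\tau \circ L_2$ to agree with $\mu_{\tau \circ \phi} \otimes \textrm{Leb}$ on $\mathcal{H}$ up to $\epsilon$ once $k > N$. The principal technical obstacle is tightly quantifying the tensor approximation in terms of $\delta_2$, $k$, and $\sup_\tau \tau(1-p)$ so that the accumulated error (from the small corner $1-p$, the approximate commutation $[u, D] = O(\delta_2)$, and the replacement of characteristic functions by continuous cutoffs) fits simultaneously inside the slack $\Delta(a/\sqrt{2}) \cdot a/(2\pi\sqrt{2}) - \Delta_0(a)$ for every $a > 3\sqrt{2}\eta$; this is what dictates how large $k$ must be chosen and how small $\delta_2$ must be.
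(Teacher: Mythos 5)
Your proposal follows essentially the same route as the paper's: use tracial approximate divisibility to produce an approximately centralizing matrix corner $B \cong M_k$, place in $B$ a diagonal unitary $w'$ whose spectrum consists of $k$-th roots of unity, realize it as a single commutator, connect it to $1_B$ inside $B$, extend by $1-p$, and build $L_1, L_2$ from $(\phi, uw)$ and $(\phi, w)$ via Lemma \ref{pert-comm-lem}. The measure bound and (\ref{pert-mea-cond3}) rest, exactly as you say, on the factorization $\tau(ab)=\tau(a)t(b)$ for $a\in B'\cap pAp$, $b\in B$, combined with $t((w')^n)=0$ for $0<|n|<k$.

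Two details merit mention. You correctly flag that the single-commutator claim requires $\det w'=1$, hence (with the standard diagonal of $k$-th roots of unity) $k$ odd; the paper's assertion ``In particular, $w' \in DU(B)$'' does not in fact follow from the trace computation it is appended to and needs the same determinant check. But tracial approximate divisibility only delivers $k\ge N$ with no parity control, so you may not simply demand that $k$ be odd; the clean fix, valid for all $k$, is to multiply one diagonal entry of $w'$ by $\det(w')^{-1}$, which perturbs each $t((w')^n)$ and the spectral measure only by $O(1/k)$ and is therefore harmless once $k$ is large relative to $\eta$ and $N$. Second, you derive the lower bound on $\mu_{\tau\circ L_1}$ directly from the tensor factorization after replacing $u$ by a nearby $u'\oplus u''$ with $u'\in B'\cap pAp$; the paper instead proves the bound for $\mu_{\tau\circ L_2}$ and transports it to $L_1$ via Lemma 3.4 of \cite{Lin-AU11} using (\ref{pert-mea-cond3}). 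Both arguments work; the latter is what accounts for the specific constants built into $\Delta_0$. As you acknowledge, the quantitative accounting (sizes of $\delta_2$, $k$, and $\sup_\tau\tau(1-p)$) remains to be spelled out, but the structure of the argument is sound.
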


\begin{proof}

Let $\tilde{\mathcal H}\subseteq C(X\times\mathbb T)$ (in the place of $\mathcal G$) and $\tilde{\ep}>0$ (in the place of $\delta$) be the finite subset and constant of Lemma 3.4 of \cite{Lin-AU11} with respect to $\Delta_{00}(a\sqrt{2}/2)a\sqrt{2}/8$, $\eta$ and $\lambda_1=\lambda_2=1/2$. Regarding $C(X\times\mathbb T)$ as the center of $C\otimes C(\mathbb T)$, the subset $\tilde{\mathcal H}$ is inside $C\otimes C(\mathbb T)$.

Then without loss of generality, one may assume that $\tilde{\mathcal H}\subseteq \mathcal H$ and $\ep<\tilde{\ep}$, and one may also assume
$$\mathcal G'=\{f'_i\otimes z^{m_i};\ f'_i\in C, m_i\in \Z, i=1, ..., N\},$$
$$\mathcal H=\{f_i\otimes z^{n_i};\ f_i\in C, n_i\in \Z, i=1, ..., N\},$$ $1\in\mathcal F$ and $\|f_i\|,\, \|f_i'\|\le 1.$  Choose $M\in\mathbb N$ so that $|m_i|, |n_i|<M$ for any $i=1, ..., N$, and denote by $$\F_1=\{f_i', f_i;\ i=1, ..., N\}.$$

Let the natural number ${N_1}$ satisfies $${\eta\in[\frac{1}{2^{N_1+1}}, \frac{1}{2^{N_1}})}.$$
{For each $1\le j\le N_1,$ by a compactness argument, choosing ${\cal O}_j$
to be  a finite collection of  open balls of $X$ with radius $1/2^{j+2}$ which has the following property: for any open ball $O_a$ of $X$ with radius $a\in [1/2^{j+1}, 1/2^j),$ there is an open ball $O'\in {\cal O}_j$ such that $O'\subset O_a.$

Put ${\cal O}=\cup_{j=1}^{N_1} {\cal O}_j.$}
For each ${O'\in\mathcal O_j}$, fix a norm-one positive function $g$ such that the support of $g_{O'}$ is in ${O'}$, and is constant one if restricted to the open ball with the same center of ${O'}$ and with the radius ${\frac{1}{2^{j+3}}}$. Then $g_{O'}P$ is a central element of $C$.
Put ${\cal T}=\{g_{O'}P: O'\in {\cal O}\}.$

By Lemma \ref{pert-comm-lem}, for any $\min\{\Delta(\frac{1}{2^{N_1+3}})/2^{N_1+7}, \ep/2\}>\ep'>0$, there are $\delta'>0$ and a finite subset $\mathcal G\subseteq C$ such that for any C*-algebra $A$, any unitary $v\in A$ with $$\|[\phi(c), v]\|<\delta',\quad\forall c\in\mathcal G,$$  there exists a unital \morp\,   $L: C\otimes {\mathrm C}(\T)\to A$ with
$$\|L(f\otimes z^n)-\phi(f)v^n\|<\ep'<\ep/16,\quad\forall f\in\F\cup\F_1,\ -M\leq n\leq M.$$
By choosing $\ep'$ sufficiently small, the resulting map $L$ is  $\mathcal G'$-$\ep$-multiplicative. Without loss of generality, one may assume that $\delta'<\ep$.

One then asserts that $\delta:=\delta'/2$ and $\mathcal G$ satisfy the lemma. Let $\phi: C\to A$ be a homomorphism and $u\in A$ be a unitary with
$$\|[\phi(c), u]\|<\delta,\quad\forall c\in\mathcal G.$$

{Choose} an integer ${K\ge \max\{2^6\pi/\eta, \, 4(M+1)\}}$.
%
%
Since $A$ is tracially approximately divisible, for any $ \min\{ \Delta(\frac{1}{2^{N_1+3}})/2^{N_1+7}, \ep/32M\}>\ep''>0$ (which will be fixed later), there is a projection $p\in A$, a
unital \SCA\, $B\subset A$ with $B\cong
M_k(\C),$ with $1_{B}=p$ and $k\geq K$ such that
\begin{enumerate}
\item\label{tr-app-div-1} $\tau(1-p)<\ep''/16$ for any $\tau\in T(A)$,
\item\label{tr-app-div-2} $\phi(\F\cup\F_1\cup\mathcal G\cup\mathcal T)\subseteq_{\ep''} B'\cap A$ and $u\in_{\ep''} B'\cap A,$
\end{enumerate}
where $B'\cap A$ is the relative commutant of $B$ in $A.$
Let $w'\in B\cong M_k(\C)$ which has the following matrix form
\beq\label{La1-1}
w'=\begin{pmatrix}
e^{2\pi i/k}  & 0 & 0 &  \cdots\\
0 & e^{2\pi i 2 /k} & 0 &\cdots\\
&& \ddots \\
    0 &0& \cdots&        e^{2\pi i k/k}
              \end{pmatrix}.
\eneq
We compute that
\beq\label{La1-2}
t(w')=0,
\eneq
where $t\in T(B)$ is the tracial state. Moreover, for any $0<|n|\le M,$
\beq\label{La1-3}
t((w')^n)=1+\sum_{j=1}^{k-1} e^{2\pi nj i /k}={1-e^{2\pi nk i /k}\over{1-e^{2\pi n i /k}}}=0.
\eneq
In particular, $w' \in DU(B).$
Note that, since $B\cong M_k,$ there exist two unitaries
 $w'_1, w'_2\in B$  such that  $w'=w_1'w_2'(w_1')^*(w_2')^*.$
Let $\{w'(t); t\in[0, 1]\}\subseteq B$ be a continuous path of unitaries such that $w'(0)=1_{B }=p$ and $w'(1)=w'$. Denote by  $w_1=(1-p)+w_1'$, $w_2=(1-p)+w_2'$ and $$w=(1-p)+w'\quad\textrm{and}\quad w(t)=(1-p)+w'(t).$$

It is clear that (\ref{pert-mea-cond-1}) holds   when $\ep''$ sufficiently small.
By choosing $\ep''$ smaller, it follows from (2) above  that
$$\|[w(t),\, u]\|<\delta/2<\ep\andeqn \|[w(t), \phi(a)]\|<\delta/2<\ep,\quad\forall a\in\F, \forall t\in[0, 1],$$ which also verifies (\ref{pert-mea-cond0}).

One also assume that $\ep''$ is even sufficiently small so that for any $c\in\mathcal G$
\beq
\|[\phi(c), uw]\|<\delta',\quad \|[\phi(c), w]\|<\delta',\andeqn\, \|(uw)^n-u^nw^n\|<\ep/16,\quad -M\leq n\leq M.
\eneq
Then there are $\mathcal G'$-$\ep$-multiplicative linear maps $L_1, L_2: C\otimes\mathrm{C}(\mathbb T)\to A$ such that
$$\|L_1(f\otimes z^n)-\phi(f)(uw)^n\|<\ep'<\ep/16,\quad\forall f\in\F\cup\F_1,\ -M\leq n\leq M,$$ and
$$\|L_2(f\otimes z^n)-\phi(f)w^n\|<\ep'<\ep/16,\quad\forall f\in\F\cup\F_1,\ -M\leq n\leq M.$$

Since $1\in\mathcal F$, the maps $L_1$ and $L_2$ satisfy  (\ref{pert-mea-cond1}) and (\ref{pert-mea-cond2}).
Let us verify (\ref{pert-mea-cond3}). Let $\tau$ be any tracial state of $A$. Note that, for any $a\in B\subseteq pAp$ and any $b\in B'\cap pAp$, one has that $\tau(ba)=\tau(b)\tau(a)=\tau(b)\tau(p)t(a)$, where $t$ is the unique tracial state on $B$.

For any $i=1, ..., N$, choose $a_i', u''\in(1-p)A(1-p)$ and $a_i, u'\in B'\cap pAp,$ where $u', u''$ are unitaries and $\|a_i\|,\, \|a_i'\|\le 1$ such that $$\|(a_i+a_i')-\phi(f_i)\|<\ep''<\ep/32\quad \textrm{and}\quad \|(u'+u'')-u\|<\ep''<\ep/32M.$$  Then
\begin{eqnarray*}
\tau\circ L_1(f_i\otimes z^{n_i})&\approx_{\ep/16}&\tau(\phi(f_i)(uw)^{n_i})\\
&\approx_{\ep/16}&\tau(\phi(f_i)u^{n_i}w^{n_i})\\
&\approx_{\ep/16}& \tau(a_i(u')^{n_i} w^{n_i})\\
&=&\tau(a_i(u')^{n_i}) t(w^{n_i})\\
&\approx_{\ep/16}&\left\{\begin{array}{ll}
\tau(\phi(f_i)) & \textrm{if $n_i=0$}\\
0 & \textrm{if $n_i\neq0$}
\end{array}
\right. ,
\end{eqnarray*}
and
\begin{eqnarray*}
\tau\circ L_2(f_i\otimes z^{n_i})&\approx_{\ep/16}&\tau(\phi(f_i)w^{n_i})\\
&\approx_{\ep/16}& \tau(f_i' w^{n_i})\\
&=&\tau(a_i) t(w^{n_i})\\
&\approx_{\ep/16}&\left\{\begin{array}{ll}
\tau(\phi(f_i)) & \textrm{if $n_i=0$}\\
0 & \textrm{if $n_i\neq0$}
\end{array}
\right. .
\end{eqnarray*}
Thus, $$|\tau\circ L_1(f_i\otimes z^{n_i})-\tau\circ L_2(f_i\otimes z^{n_i})|<\ep.$$
Note that we choose
$K\ge \max\{2^6\pi/\eta, \, 4(M+1)\}.$ In particular,
$$
{2\pi\over{K}}\le 1/2^{N_1+5}.
$$
One then computes that
$$|\mu_t(I)-|I||<\frac{1}{2^{N_1+3}}$$ for any arc $I$ with length at least $\eta$, where $\mu_t$ is the Borel probability measure induced  by positive linear functional $t\circ f(w)$ on $C(\T),$ where $t$ is the tracial state on $B.$

Now, let ${B_a}$ be any open ball on $X\times\mathbb T$ with radius
$a$. Denote by $(a_0, b_0)$ the {center} of $B_a$.
Denote by ${O_{a\sqrt{2}/2}}$ the open ball of $X$ with radius
$a\sqrt{2}/2$ and {center} $a_0$, and denote by ${J_{a\sqrt{2}/2}}$ the open
ball of $\mathbb T$ with radius $a\sqrt{2}/2$ and { center} $b_0$. Note
that ${ O_{a\sqrt{2}/2}\times J_{a\sqrt{2}/2}\subseteq O_a}$.

Assume that $a\sqrt{2}/2\in[\frac{1}{2^{j+1}}, \frac{1}{2^j})$ {for some
 $1\le j\le N_1$} Then choose $O'_j\in\mathcal O_j$ such that $O'_j\subseteq O'_{a\sqrt{2}/2}$, and consider $g_1P\in\mathcal T$ associated to $O'_k$, and any norm-one positive continuous function $g_2$ on $\mathbb T$ with support in $J_{a\sqrt{2}/2}$. Note that $$\phi(g_1)\approx_{\ep''} a+b$$ for some $b$ commutes with $B$ and the traces of $a$ are at most $\ep''$.

Consider the function $g(x, t):=g_1(x)P\cdot g_2(t).$
Then, for any $a\geq\sqrt{2}\eta>\eta$,
\begin{eqnarray}
\mu_{\tau\circ L_2}(B_a) \geq \tau(L_2(g))&>&\tau(b g_2(w))-\ep'\\
&=&\tau(b)\cdot t(g_2(w))-\ep'\\
&>& \tau(\phi(g_1))\cdot t(g_2(w))-\ep'-\ep''\\
&>&\Delta(\frac{1}{2^{k+3}})\cdot t(g_2(w))-\ep'-\ep''
\end{eqnarray}
Since this holds for any $g_2$, one has
$$\mu_{\tau\circ L_2}(B_a)\geq \Delta(\frac{1}{2^{k+3}})\cdot \mu_t(J_{a\sqrt{2}/2})/2-\Delta(\frac{1}{2^{N_1+3}})/2^{N_1+5}=\Delta(\frac{1}{2^{k+3}})a\sqrt{2}/8>\Delta_{00}(a\sqrt{2}/2)a\sqrt{2}/8,$$
where $\mu_t$ is the spectral measure of $w$.

Note that $$|\tau\circ L_1(g)-\tau\circ L_2(g)|<\ep<\tilde{\ep}\quad\forall g\in\tilde{\mathcal H}\subseteq\mathcal H,$$ by Lemma 3.4 of \cite{Lin-AU11}, one has $$\mu_{\tau\circ L_1}(B_a)>\frac{\sqrt{2}}{48}\Delta_{00}(a\sqrt{2}/6)a=\Delta_0(a)$$ for any $a\geq 3\sqrt{2}\eta$.
\end{proof}

\begin{df}
{\rm Let $A$ be a unital \CA. In the following, for any invertible element $x\in A$, let $\langle x \rangle$ denote the unitary $x(x^*x)^{-\frac{1}{2}}$, and let $\overline{x}$ denote the element $\overline{\langle x \rangle}$ in $U(A)/CU(A)$. Consider a subgroup $\Z^k\subseteq K_1(A)$, and write the unitaries $\{u_1, ..., u_k\}\subseteq U_c(A)$ corresponding to the standard generators $\{e_1, e_2,...,e_k\}$ of $\Z^k$. Suppose that $\{u_1, u_2,...,u_k\}\subset M_n(A)$ for some integer $n\ge 1$. Let $\Phi: A\to B$ be a  unital positive linear map such that $\Phi\otimes {\mathrm{id}}_{M_n}$ is
at least $\{u_1, ..., u_k\}$-$1/4$-multiplicative (hence each $\Phi\otimes {\mathrm{id}}_{M_n}(u_i)$ is invertible), then the map $\Phi^\ddag|_{s_1(\Z^k)}: \Z^k\to U(B)/CU(B)$ is defined by
$$\Phi^\ddag|_{s_1(\Z^k)}(e_i)= \overline{\langle\Phi\otimes {\mathrm{id}}_{M_n}(u_i)\rangle},\quad 1\leq i\leq k.$$
Thus, for any finitely generated subgroup
$G\subset {\overline{U_c(A)}},$ there exists $\dt>0$ and a finite subset ${\cal G}\subset A$ such that, for any unital $\dt$-${\cal G}$-multiplicative \morp\,
$L: A\to  B$ (for any unital \CA\, $B$), the map $L^{\ddag}$ is well defined
on  $s_1(G).$ (Please see  \ref{DDet} for $U_c(A)$ and $s_1.$)
}
\end{df}

\begin{thm}\label{hmtp-H}
Let $C=\mathrm{C}(X)$ with $X$ a compact
metric space
and let $\Delta: (0, 1) \to (0, 1)$ be a non-decreasing map. For any $\ep>0$ and any finite subset $\mathcal F \subseteq C$, there exists $\delta>0$, $\eta>0$, $\gamma>0$, a finite subsets $\mathcal G \subseteq C$, $\mathcal P\subseteq \underline{K}(C)$,  a finite
 subset ${\cal Q}=\{x_1, x_2,...,x_m\}\subset K_0(C)$ which generates a free
 subgroup and $x_i=[p_i]-[q_i],$ where $p_i, q_i\in M_n(C)$
 (for some integer $n\ge 1$) are projections,
 satisfying the following:

Suppose that $A$  is a unital simple C*-algebra with $TR(A)\leq 1$, $\phi: C\to A$ is a unital homomorphism and $u\in A$ is a unitary, and suppose that
$$\|[\phi(c), u]\|<\delta,\ \forall c\in\mathcal G\quad\mathrm{and}\quad \mathrm{Bott}(\phi, u)|_{\mathcal P}=0,$$ and
$$\mu_{\tau\circ \phi}(O_a)\geq\Delta(a)\ \forall \tau\in T(A),$$
where $O_a$ is any open ball in $X$ with radius $\eta\leq a<1$ and $\mu_{\tau\circ \phi}$ is the Borel probability measure defined
by $\tau\circ \phi$. Moreover, for each $1\leq i\leq m$,
there is $v_i\in CU(M_n(A))$ such that
$$
\|\langle(\mathbf 1_n-\phi(p_i)+\phi(p_i){1}_n\otimes u)
(\mathbf 1_n-\phi(q_i)+\phi(q_i)({1}_n\otimes u^*\rangle-v_i\|<\gamma.
$$
Then there is a continuous path of unitaries $\{u(t): t\in[0, 1]\}$ in $A$ such that
$$u(0)=u, u(1)=1,\ \textrm{and}\ \|[\phi(c), u(t)]\|<\ep$$
for any $c\in\mathcal F$ and for any $t\in[0, 1]$.
\end{thm}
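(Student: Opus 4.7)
The plan is to follow the basic-homotopy-lemma paradigm: reduce the existence of a path $u(t)$ from $u$ to $1$ (almost commuting with $\phi(\mathcal{F})$) to a suitable uniqueness theorem for approximately multiplicative maps $C\otimes C(\mathbb{T})\to A$, after first perturbing $u$ by a commutator produced by Lemma \ref{pert-meas-lem}.

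First, by Lemma \ref{pert-comm-lem}, the almost commutation $\|[\phi(c),u]\|<\delta$ on $\mathcal{G}$ yields a unital c.p.\ map $L:C\otimes C(\mathbb{T})\to A$ with $L(c\otimes z^n)\approx_\ep \phi(c)u^n$ on $\mathcal{F}$ and $|n|\le N$; the vanishing of $\mathrm{Bott}(\phi,u)|_\mathcal{P}$ means that $[L]$ coincides with $[\phi\otimes\mathrm{ev}_1]$ on $\mathcal{P}\subseteq\underline{K}(C\otimes C(\mathbb{T}))$. Next, apply Lemma \ref{pert-meas-lem} to $\phi$ and $u$ with parameters tailored from $\mathcal{F},\ep,\Delta,\eta$; it produces unitaries $w_1,w_2\in A$, a path $w(t)$ from $1$ to the commutator $w=w_1w_2w_1^*w_2^*$ that almost commutes with $\phi(\mathcal{F})$ and $u$, and two $\mathcal{G}'$-$\ep$-multiplicative maps $L_1,L_2:C\otimes C(\mathbb{T})\to A$ with
\[ L_1(c\otimes z)\approx \phi(c)(uw), \qquad L_2(c\otimes z)\approx \phi(c)w, \]
tracial agreement $|\tau\circ L_1-\tau\circ L_2|<\ep$ on $\mathcal{H}$, and the measure lower bound $\mu_{\tau\circ L_i}(B_a)>\Delta_0(a)$ for all balls in $X\times\mathbb{T}$ of radius exceeding $3\sqrt2\,\eta$.

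One then checks that $L_1$ and $L_2$ carry essentially the same invariants on the relevant finite sets. The Bott classes coincide on $\mathcal{P}$ because $\mathrm{Bott}(\phi,u)|_\mathcal{P}=0$ and $w$ lives inside a full matrix subalgebra of $A$, contributing trivially to $KL$. At the $\ddag$-level, since $w\in DU(A)$ as a genuine commutator, $L_2^\ddag$ sends the Bott-type generators $[z\otimes x_i]$ to $1$, while the hypothesis that $\langle(1_n-\phi(p_i)+\phi(p_i)(1_n\otimes u))(1_n-\phi(q_i)+\phi(q_i)(1_n\otimes u^*))\rangle$ lies within $\gamma$ of some $v_i\in CU(M_n(A))$ says precisely that $L_1^\ddag$ sends those same generators into $CU(M_n(A))$ up to $\gamma$; tracial agreement on $\mathcal{H}$ is exactly \eqref{pert-mea-cond3}. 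Applying a uniqueness theorem for approximately multiplicative maps from $C\otimes C(\mathbb{T})$ into a simple $A$ with $\mathrm{TR}(A)\le 1$, controlled by the measure $\Delta_0$ (of the type in \cite{Lin-AU11}), one obtains a unitary $V\in A$, which can be arranged to lie in $U(A)_0$, such that $V^*L_1(g)V\approx_\ep L_2(g)$ on a sufficiently rich finite set; in particular $V^*\phi(c)V\approx \phi(c)$ on $\mathcal{F}$ (so $V$ almost commutes with $\phi(\mathcal{F})$) and $V^*(uw)V\approx w$.

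The path $u(t)$ is then assembled in three stages: first $u\rightsquigarrow uw$ along $u\cdot w(t)$, which by construction almost commutes with $\phi(\mathcal{F})$; second $uw\rightsquigarrow w$ along $V_t^*(uw)V_t$, where $V_t$ is a path in $U(A)_0$ from $1$ to $V$ chosen to almost commute with $\phi(\mathcal{F})$; third $w\rightsquigarrow 1$ by reversing $w(t)$. Concatenation produces the desired $u(t)$ with $\|[\phi(c),u(t)]\|<\ep$ throughout. The main obstacle is the uniqueness step: one must formulate a precise uniqueness theorem whose input is exactly the measure bound $\Delta_0$, the agreement of $KL$ on $\mathcal{P}$, of $\ddag$ on $\mathcal{Q}$, and of traces on $\mathcal{H}$, and whose output is a unitary $V\in U(A)_0$ connectible to $1$ along a path that itself almost commutes with $\phi(\mathcal{F})$. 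This forces a delicate simultaneous choice of $\delta,\eta,\gamma$ and of the finite sets $\mathcal{G},\mathcal{P},\mathcal{Q}$ at the outset, in order to control every invariant that the uniqueness theorem sees.
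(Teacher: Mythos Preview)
Your overall architecture is correct and matches the paper's: invoke Lemma~\ref{pert-meas-lem}, verify that $L_1$ and $L_2$ agree on $KL$, traces, and $\ddag$, and appeal to the uniqueness theorem (Theorem~5.3 of \cite{Lin-AU11}). The gap is in Stage~2 of your path assembly. You write that $uw\rightsquigarrow w$ along $V_t^*(uw)V_t$, where $V_t$ is a path in $U(A)_0$ from $1$ to $V$ ``chosen to almost commute with $\phi(\mathcal{F})$.'' But the uniqueness theorem delivers only a single unitary $V$ with $V^*L_1V\approx L_2$; it says nothing about $V$ lying in $U_0(A)$, and it certainly does not provide a path $V_t$ from $1$ to $V$ that almost commutes with $\phi(\mathcal{F})$. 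Producing such a path is precisely the content of the basic homotopy lemma you are proving, so the argument is circular as stated.

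The paper circumvents this by never needing a path to $V$ (written $U$ there). Once $U$ is in hand, observe from $L_1(f\otimes 1)\approx U^*L_2(f\otimes 1)U$ that $U^*\phi(f)U\approx\phi(f)$ on $\mathcal{F}$. Now conjugate the \emph{existing} path $w'(t)$ (the one from Lemma~\ref{pert-meas-lem}, running from $w$ to $1$) by the \emph{fixed} unitary $U$: the resulting path $t\mapsto U^*w'(t)U$ runs from $U^*wU$ to $U^*\cdot 1\cdot U=1$, and it almost commutes with $\phi(\mathcal{F})$ because $w'(t)$ does and $U$ approximately fixes $\phi(\mathcal{F})$ under conjugation. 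Since the uniqueness theorem also gives $uw\approx U^*wU$ (take $f=1\otimes z$), a short bridge connects $uw$ to $U^*wU$. The three stages are thus: $u\rightsquigarrow uw$ via $u\cdot w'(t)$; a short arc $uw\rightsquigarrow U^*wU$; then $U^*wU\rightsquigarrow 1$ via $U^*w'(t)U$. No connectivity properties of $U$ itself are used.
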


\begin{proof}
Since $A$ is a simple \CA\, with $TR(A)\le 1,$ it is tracially
approximately divisible (see 5.4 of \cite{LinTAI}). Therefore \ref{pert-meas-lem} applies.
Without loss of generality, one may assume that $\F$ is in the unit ball of $C$. Let $\ep_0$ be the universal constant such that, for any unitaries $u_1$ and $u_2$ with $\|u_1-u_2\|<\ep_0$, there is a path of unitaries connecting $u_1$ and $u_2$ with length at most $\ep/2$.

Let $\eta'>0$, $\delta'>0$, $\mathcal G'\subseteq C\otimes \mathrm{C}(\mathbb T)$, $\mathcal H\subseteq C\otimes \mathrm{C}(\mathbb T)$, $\mathcal P'\subseteq\underline{K}(C\otimes\mathrm{C}(\mathbb T))$, $\mathcal U'\subseteq U_c(K_1(C\otimes \mathrm{C}(\mathbb T)))$, $\gamma_1$, and $\gamma_2$ be the finite subsets and constants of Theorem 5.3 of \cite{Lin-AU11} with respect to $X\times\mathbb T$, $\Delta_0$, $\F\otimes\{1, z\}$, and $\min\{\ep/2, \ep_0\}.$ Without loss of generality, we may assume
that
$
{\cal P}'={\cal P}\cup {\boldsymbol{\bt}({\cal P})},
$
where ${\cal P}$ is a finite subset of $\underline{K}(C),$ and
$$
{\cal G}'={\cal G}_1'\cup \{1_{\mathrm{C}(\T)}, z\},
$$
where
${\cal G}_1'$ is a finite subset of $C.$
Moreover, we may assume
 \beq\label{hmtp-H-add}
 [L']|_{\cal P}=[L'']|_{\cal P}
 \eneq
 for any unital ${\cal G}'_1$-$\dt'$-multiplicative \morp s $L',L'': C\to A$ with
$$
\|L'(g)-L''(g)\|<\gamma_2\tforal g\in {\cal G}_1'.
$$
By choosing larger ${\cal G}_1'$ and smaller $\dt',$
we may assume further that $(L')^{\ddag}$ is well defined
on $\overline{{\cal U}'}.$

Since $K_1(C\otimes \mathrm{C}(\mathbb T))=K_1(C)\oplus K_0(C)$, without of loss of generality,
the set $\mathcal U'$ may be chosen as $\mathcal U'_1\cup\mathcal U'_0$,
where $\mathcal U'_1=\{\overline{v_1\otimes 1_{\mathrm{C}(\T)}}, ..., \overline{v_{n'}\otimes 1_{\mathrm{C}(\T)}}\}$ with each
$v_i$ a unitary $M_n(C)$, and any element in $\mathcal U_0'$ has the form
$$\overline{(p\otimes z+(\mathbf{1}_n-p)\otimes 1_{\mathrm{C}(\T)})(q\otimes z+(\mathbf{1}_n-q)\otimes 1_{\mathrm{C}(\T)})^*}$$ for some projections $p$ and $q$ in $M_n(C)$ for some integer $n\geq 1$. Without loss of generality, one may assume that $\mathcal U_0'$ exactly generates a free group $\Z^m$ in $K_1(C\otimes \mathrm{C}({\mathbb T}))$ as standard generators, and hence one may write
$$\mathcal{U}_0'=\{\overline{(p_{i}\otimes z+(\mathbf{1}_n-p_{i})\otimes 1_{\mathrm{C}(\T)})(q_{i}\otimes z+(\mathbf{1}_n-q_{i})\otimes 1_{\mathrm{C}(\T)})^*};\ i=1, ..., m\},$$
where $p_i$ and $q_i$ are
 projections in $M_n(C)$.
Denote by $x_i=[p_i]-[q_i]$ for $1\leq i\leq m$, and put $\mathcal Q=\{x_1, ..., x_m\}.$

We may assume that ${\cal F}_1\subset C$ is a finite subset
such that
$$
p_i, q_i\in \{(c_{j,k})\in M_n(C): c_{j,k}\in {\cal F}_1\}.
$$
Put ${\cal F}_2= \{1_C\}\cup {\cal F}\cup {\cal F}_1.$
Let $\delta>0$ and $\mathcal G\subseteq C$ be the constant and finite subset of Lemma \ref{pert-meas-lem} with respect to $\min\{\ep/8n^2, \delta'/n^2, \gamma_1/2n^2, \gamma_2/16n^2\}$ (in place of $\ep$), ${\mathcal F_2}$ (in place of ${\cal F}$),  $\mathcal G'$ and
$\mathcal H.$

Without loss of generality, one may assume that $\delta$ is sufficiently small and $\mathcal G$ is sufficiently  large such that
$\mathrm{Bott}(\phi, u_1u_2)|_{\mathcal P}$ is well defined and
$$\mathrm{Bott}(\phi, u_1u_2)|_{\mathcal P}=\mathrm{Bott}(\phi, u_1)|_{\mathcal P}+\mathrm{Bott}(\phi, u_2)|_{\mathcal P}
$$
for any unital homomorphisms $\phi: C\to B$ for some unital C*-algebra $B$ and  unitaries $u_1, u_2\in B$ with
$$\|[\phi(a), u_i]\|<\delta,\quad \forall a\in\mathcal G, i=1, 2.$$

One asserts that $\delta$, $\eta=\frac{\sqrt{2}}{6}\eta'$, $\gamma=\gamma_2/4$, $\mathcal P$, $\mathcal G$ and $\mathcal Q$ satisfy the theorem.

Let $(\phi, u)$ be a pair which satisfies the condition of the theorem.
By Lemma \ref{pert-meas-lem}, there are unitary $w=w_1w_2w_1^*w^*_2$ with $w_1, w_2$ unitaries in $A$, a path of unitaries $\{w'(t); t\in[0, 1]\}$ with $w'(1)=1$ and $w'(0)=w$, and unital $\mathcal G'$-$\delta'$-multiplicative completely positive linear maps $L_1, L_2: C\otimes\mathrm{C}(\mathbb T) \to A$ such that for any $f\in\mathcal F$,
\begin{enumerate}
\item\label{pert-mea-cond-1-01} $\|[w_i, \phi(a)]\|<\min\{\ep/8n^2, \gamma_2/16n^2\}$, $\forall a\in\F_2$, i=1, 2,
\item\label{pert-mea-cond0-01} $\|[w'(t),\, u]\|,\,\|[w'(t), \phi(a)]\|<\min\{\ep/8n^2, \gamma_2/8n^2\}$, $\forall a\in\F_2$, $\forall t\in[0, 1]$,
\item\label{pert-mea-cond1-01} $\|L_1(f\otimes z)-(\phi(f)uw)\|
<\min\{\ep/8n^2, \gamma_2/8n^2\},\quad
\|L_1(f\otimes 1)-\phi(f)\|<\min\{\ep/8n^2, \gamma_2/8n^2\},$
\item\label{pert-mea-cond2-01}  $\|L_2(f\otimes z)-(\phi(f) w)\|<\min\{\ep/8n^2, \gamma_2/8n^2\},\quad \|L_2(f\otimes 1)-\phi(f)\|<\min\{\ep/8n^2, \gamma_2/8n^2\},$
\item\label{pert-mea-cond3-01}  $|\tau\circ L_1(g)-\tau\circ L_2(g)|<\gamma_1/2n^2,\quad\forall g\in \mathcal H,\ \forall \tau\in \mathrm{T}(A)$,
\item\label{pert-mea-cond4-01} $\mu_{\tau\circ L_i}(O_a)>\Delta_0(a)$, $i=0, 1$ for any open ball $O_a$ of $X\times\mathbb T$ with radius $a>3\sqrt{2}\eta=\eta',$
\end{enumerate}

It follows from (\ref{pert-mea-cond0-01})  that
 $\mathrm{Bott}(\phi, w)=0.$ 
 Therefore
\begin{eqnarray*}
\mathrm{Bott}(\phi, uw)|_{\mathcal P}&=&\mathrm{Bott}(\phi, u)|_{\mathcal P}+\mathrm{Bott}(\phi, w)|_{\mathcal P}\\
&=&\mathrm{Bott}(\phi, u)|_{\mathcal P}=0.
\end{eqnarray*}
We also have, by (\ref{hmtp-H-add}),
\begin{equation}\label{alg-kk-01}
[L_1]|_{\mathcal P}=[\phi]|_{\mathcal P}=[L_2]|_{\mathcal P}.
\end{equation}

Note that, by (\ref{pert-mea-cond-1-01}), $$w=w_1w_2w_1^*w_2^*$$ with $\|[w_i, \phi(a)]\|<\min\{\ep/8n^2, \gamma_2/16n^2\}$, $\forall a\in\F_2$, $i=1, 2$. Then for any projection $p_i$ (or $q_i$), one estimates  that
\beq\label{Na-1}
{\rm dist}(\langle (\mathbf1_n-\phi(p_i))+\phi(p_i) {\tilde w}\rangle, CU(M_n(A)))&<&\gamma_2/16\andeqn\\
 {\rm dist}(\langle (\mathbf1_n-\phi(q_i))+\phi(q_i){\tilde  w}\rangle, CU(M_n(A)))&<& \gamma_2/16,
\eneq
for any $1\le i\le m,$ where ${\tilde w}={\rm diag}(\overbrace{w,w,...w}^n).$
Therefore, for any $1\leq i\leq m$,
\beq
&&\hspace{-0.6in}\mathrm{dist}(\overline{L_2((p_i\otimes z+{\mathbf 1}_n-p_i)(q_i\otimes z+{\mathbf 1}_n-q_i)^*)}, \overline{\mathbf 1_n})\approx_{\gamma_2/4}0,\andeqn\\
&&\hspace{-0.6in}\mathrm{dist}(\overline{\langle L_1((p_i\otimes z+({\mathbf 1}_n-p_i))(q_i\otimes z+({\mathbf 1}_n-q_i))^*)\rangle}, \overline{\mathbf 1_n})\\
&\approx_{\gamma_2/8}& \mathrm{dist}(\overline{\langle((\mathbf {1}_n- \phi(p_i))+\phi(p_i){{\widetilde{u w}}})((\mathbf {1}_n-q_i)+\phi(q_i){{\widetilde{u w}}})^*\rangle}, \overline{\mathbf 1_n})\\
&\approx_{\gamma_2/8}& \mathrm{dist}(\overline{\langle((\mathbf {1}_n- \phi(p_i))+\phi(p_i){{\tilde u}})((\mathbf {1}_n-q_i)+\phi(q_i){{\tilde u}})^*\rangle}, \overline{\mathbf 1_n})\\
&<&\gamma=\gamma_2/4,
\eneq
{where ${\tilde a}={\rm diag}(\overbrace{a,a,...,a}^n).$}
Also note that for any $v_i\otimes 1\in \mathcal U'_1$, one computes that
$$\mathrm{dist}(\overline{\langle L_1(v_i\otimes 1)\rangle}, \overline{L_2(v_{i}\otimes 1)})\approx_{\gamma_2}\mathrm{dist}(\overline{\phi(v_i)}, \overline{\phi(v_{i})})=0.$$
Since $U_0(A)/CU(A)$ is torsion free (Theorem 6.11 of \cite{LinTAI}), one has that
\begin{equation}\label{alg-k1-01}
\mathrm{dist}(\overline{\langle L_1(u)\rangle}, \overline{\langle L_2(u)\rangle })<\gamma_2,\quad\forall u\in \mathcal U'.
\end{equation}
By \eqref{alg-kk-01} \eqref{pert-mea-cond3-01}, \eqref{alg-k1-01} and \eqref{pert-mea-cond4-01}, it follows from Theorem 5.3 of \cite{Lin-AU11} that there is a unitary $U\in A$ such that
$$\|L_1(f)-U^*L_2(f)U\|<\min\{\ep/2, \ep_0\},\quad\forall f\in\F\otimes\{1, z\}.$$

Consider the path of unitaries $w(t): t\mapsto U^*w'(2t-1)U$, $t\in[1/2, 1].$ Then
\beq
\|[\phi(f), w(t)]\|<\ep\quad\forall f\in \mathcal F, t\in[1/2, 1]\andeqn
\|w(1/2)-uw\|<\ep_0, \quad w(1)=1.
 \eneq
 By the choice of $\ep_0$, there is a path of unitaries $\{w''(t);\ t\in[1/4, 1/2]\}$ such that
 \beq
 \|[\phi(f), w(t)]\|<\ep,\quad\forall f\in\mathcal F,\ t\in[1/4, 1/2],\andeqn\\
w''(1/4)=uw\quad\mathrm{and}\quad w''(1/2)=w(1/2).
\eneq
Also consider the path of unitaries $w'''(t): t\to uw'(4t)$, $t\in[0, 1/4].$ Then one has that $w'''(0)=u$, $w'''(1/4)=uw$ and $$\|[w'''(t), \phi(f)]\|<\ep,\quad\forall f\in C.$$ Define the path
$$
w(t)=\left\{
\begin{array}{ll}
w'''(t), & \textrm{if $t\in[0, 1/4]$},\\
w''(t), & \textrm{if $t\in [1/4, 1/2]$},\\
w(t), &\textrm{if $t\in[1/2, 1]$}.
\end{array}
\right.
$$
Then it is clear that
$$\|[\phi(f), w(t)]\|<\ep,\quad\forall f\in \F, t\in[0, 1],$$ $$w(0)=u\quad\mathrm{and}\quad w(1)=1,$$
as desired.
\end{proof}

\begin{cor}\label{hp-mat}
Let $X$ be a {compact subset of finite CW-complex} and let $C=PM_n(\mathrm{C}(X))P$ for some integer $n\geq 1$ and $P$ a projection in $M_n(\mathrm{C}(X))$. Then the statement of Theorem \ref{hmtp-H} still holds for the C*-algebra $C$ and using the measure define in \ref{mea}.
\end{cor}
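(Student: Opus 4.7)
The plan is to check that the proof of Theorem \ref{hmtp-H} goes through \emph{verbatim} once $C(X)$ is replaced by $C=PM_n(C(X))P$ and the measure $\mu_{\tau\circ\phi}$ is understood in the sense of Definition \ref{mea}. In other words, the corollary is not a separate result so much as an observation that every ingredient used in the proof of \ref{hmtp-H} has already been set up for this more general setting. My proof would therefore consist of pointing out the three inputs that need to be cited at the level of corners of matrix algebras, and then appealing to the argument of \ref{hmtp-H} without further modification.

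First, the perturbation/measure lemma \ref{pert-meas-lem} is stated for exactly $C=PM_r(C(X))P$, so the construction of the commutator unitary $w=w_1w_2w_1^*w_2^*$, the connecting path $w'(t)$, and the two $\mathcal G'$-$\delta'$-multiplicative maps $L_1,L_2\colon C\otimes C(\mathbb T)\to A$ with the trace-closeness (\ref{pert-mea-cond3}) and the measure lower bound $\mu_{\tau\circ L_i}(B_a)>\Delta_0(a)$ on balls in $X\times\mathbb T$ all transfer without change. Second, the approximate uniqueness result, Theorem 5.3 of \cite{Lin-AU11}, is the tool that yields the unitary $U$ conjugating $L_1$ to $L_2$; since $C\otimes C(\mathbb T)\cong \widetilde P M_n(C(X\times\mathbb T))\widetilde P$ where $\widetilde P(x,z):=P(x)$, this is again of the form $QM_n(C(Y))Q$ with $Y=X\times\mathbb T$ a compact subset of a finite CW complex, which is in the range of that theorem. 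I would therefore choose the constants $\eta'$, $\delta'$, $\gamma_1$, $\gamma_2$ and the finite subsets $\mathcal G',\mathcal H,\mathcal P',\mathcal U'$ by applying Theorem 5.3 of \cite{Lin-AU11} to $\widetilde P M_n(C(X\times\mathbb T))\widetilde P$, the measure of \ref{mea}, and $\Delta_0$, exactly as in the proof of \ref{hmtp-H}.

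Third, the $K$-theoretic setup is unchanged: by Künneth, $K_1(C\otimes C(\mathbb T))=K_1(C)\oplus K_0(C)$, and classes in $K_0(C)$ are represented by formal differences $[p_i]-[q_i]$ of projections in some $M_m(C)=M_m(PM_n(C(X))P)$, producing the ``Bott-type'' generators of $\mathcal U_0'$ in exactly the same form as before. The Bott map $\mathrm{Bott}(\phi,u)$, its additivity, and the determinant calculation in $U_0(A)/CU(A)$ used to derive (\ref{Na-1}) depend only on $\phi$ being a unital homomorphism from a unital nuclear C*-algebra with well-behaved unitary group, which is satisfied by $C=PM_n(C(X))P$.

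With these three points in place, the concluding estimates of \ref{hmtp-H} (the identities $[L_1]|_{\mathcal P}=[L_2]|_{\mathcal P}$ and $\mathrm{dist}(\overline{\langle L_1(u)\rangle},\overline{\langle L_2(u)\rangle})<\gamma_2$ on $\mathcal U'$, followed by the application of Theorem 5.3 of \cite{Lin-AU11} and the concatenation of the paths $w'''(t)$, $w''(t)$, and $U^*w'(2t-1)U$) are applied in the same order to produce the desired path $\{u(t):t\in[0,1]\}$ with $u(0)=u$, $u(1)=1$, and $\|[\phi(c),u(t)]\|<\ep$ for $c\in\mathcal F$. The only point that requires a moment of care, and which I expect to be the main bookkeeping obstacle, is checking that the ``standard generator'' form of $\mathcal U_0'$ and the associated set $\mathcal Q=\{x_1,\dots,x_m\}\subset K_0(C)$ can still be arranged when projections are taken in $M_m(PM_n(C(X))P)$ rather than in $M_m(C(X))$; this reduces to the routine fact that $K_0$ of a unital homogeneous algebra is generated by classes of projections in matrix algebras over it, so no new argument is required.
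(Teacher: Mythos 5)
Your plan is a ``verbatim extension'' of the proof of Theorem \ref{hmtp-H} to $C=PM_n(C(X))P$, and this is a genuinely different route from what the paper does. The paper's proof is a reduction: it first remarks that the case $C=M_n(C(X))$ follows directly from Theorem \ref{hmtp-H} (that is, the full-matrix case is where the ``verbatim'' argument is valid, and in fact there $X$ can be any compact metric space), and then it handles the genuine corner $PM_n(C(X))P$ by an absorption trick. Because $X$ is a compact subset of a finite CW-complex one can find $Q\in M_d(PM_n(C(X))P)$ with $QM_d(PM_n(C(X))P)Q\cong M_r(C(X))$ and a partial isometry $V$ with $VV^*\le Q$, $V^*V = P$; the theorem is applied to $M_r(C(X))$ inside $qM_d(A)q$, and the resulting path is compressed back to $A$ via $v=\phi\otimes 1_{M_d}(V)$. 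The technical heart of that transfer is Lemma~3.3 of \cite{Lin-hmtp}: one only gets a distance estimate to $CU(M_{k'}(A))$, and passing to $CU(M_k(qM_d(A)q))$ costs a factor on the order of $k'R_1$, which is why $\gamma$ is taken to be $\gamma'/(8(k'R_1+\frac18))$. Your proposal never confronts this, because you assume no reduction is needed.

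The gap in your argument is the step where you invoke Theorem~5.3 of \cite{Lin-AU11} for $C\otimes C(\mathbb T)\cong \widetilde P M_n(C(X\times\mathbb T))\widetilde P$. You assert that the corner algebra ``is in the range of that theorem,'' but nothing in the paper supports that; on the contrary, both here and in the proof of Corollary~\ref{BB-exi-mat} the authors deliberately reduce the corner case to the case of a full matrix algebra $M_r(C(Y))$ before invoking \cite{Lin-AU11}. If Theorem~5.3 applied verbatim to $\widetilde P M_n(C(X\times\mathbb T))\widetilde P$, your argument would prove the corollary for an arbitrary compact metric space $X$ in one step, making the ``compact subset of a finite CW-complex'' hypothesis and the two-stage reduction in the remark following the corollary superfluous; the authors clearly did not think that was available. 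So the direct appeal to \cite{Lin-AU11} for the corner is precisely what you would have to justify, and it is the missing idea. Similarly, your claim that the $K_0$ bookkeeping is routine glosses over the need to move classes $[p_i]-[q_i]$ between $M_k(QM_d(C)Q)$ and $M_{k'}(C)$ while keeping control of $CU$-distances -- which is exactly what Lemma~3.3 of \cite{Lin-hmtp} supplies, and which your outline omits.
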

\begin{proof}

If $C=M_n(C(X))$, it is clear that the corollary follows from Theorem \ref{hmtp-H} ($X$ is even not required to have finite covering space in this case). In what follows we will use this case of the corollary to prove the general case.

Assume that $C=PM_n(C(X))P$. {Since $X$ is compact, the rank of $P$ has only finitely many values. It follows that, without loss of generality,  we may assume} that $P(x)\not=0$ for all $x\in  X.$ Since $X$ {is a compact subset of finite CW-complex,} there is an integer $d$ and a projection $Q\in M_d(PM_n(C(X))P)$ such that
$$QM_d(PM_n(C(X))P)Q\cong M_{r}(C(X))$$
for some integer $r$. Note that
$Q(x)\not=0$ for all $x\in X.$ Without loss of generality, one may assume that $P \preceq Q$, that is, there is also a partial isometry $V\in M_d(PM_n(C(X))P)$ such that $VV^*\leq Q$ and $V^*V=\mathrm\{P, 0, ..., 0\}$.

There is an integer $l\geq 1$ such that $X=X_1\sqcup\cdots\sqcup X_l$ such that the ranks of the restrictions of $P$ and $Q$ to each $X_i$, $1\leq j\leq l$, are constant. Denote by $P_j$ and $Q_j$ the restriction of $P$ and $Q$ to $X_j$ respectively.
Let $R_1=\max_{1\le j\le l}\{{\rm rank} P_j\}$ and $R_2=\min_{1\le j\le l}\{{\rm rank} Q_j\}.$

Fix $d$, $Q$, and $V$.
Let $\Delta: (0, 1) \to (0, 1)$ be a non-decreasing map, let $\ep>0$ and $\F\subseteq PM_n(C(X))P$ be a finite subset of elements with norm one.

Pick $\frac{\ep}{4}>\ep'>0$ such that for any unitaries $u, v$ in a C*-algebra with $\|u-v\|<\ep'$, there is a path of unitaries $u(t)$ such that $u(0)=u$, $u(1)=v$, and $\|u(t)-v\|<\frac{\ep}{2}$, $\forall t\in[0, 1]$.

Pick $\frac{\ep'}{4}>\ep''>0$ such that if there are a projection $p$ and a unitary $U$ in a C*-algebra $A$ with $\|[p, U]\|<\ep''$, then
$$\|\langle pUp\rangle-pUp\|<\ep'/4.$$
(Recall that $\langle pUp\rangle=pUp(pU^*pUp)^{-\frac{1}{2}}$.)

Denote by  $\dt'$ (in place $\dt$), $\eta,$ $\gamma'$ (in place of $\gamma$),
${\cal G}'\subseteq QM_d(P(M_n(C(X)))P)Q\cong M_{r}(C(X))$ (in place of ${\cal G}$), $\mathcal P\subseteq \underline{K}(C(X))$, and $\mathcal Q\subseteq K_0(C(X))$ the constants and finite subsets of the corollary required for $M_{r}(C(X))$ with $\ep''$, $V \F V^*$, and $\Delta$.

We may assume that $\gamma'<1.$
For each $x_i\in\mathcal Q$, write $x_i=[p_i]-[q_i]$ with $p_i, q_i\in M_k(QM_d(C)Q)$ for some integer $k$. Choose an integer $k'$ such that $$M_k(QM_d(C)Q)\subseteq M_{k'}(C).$$
Without loss of generality, one also assumes that any element of $ {\cal G}'$ has norm one, and $VV^* \in \mathcal G'$.
Choose a finite subset $\mathcal G_1\subseteq C$ and $\delta_1>0$ such that if there is a C*-algebra $A$ and a unitary $u\in A$ satisfies $$\|[\phi(c), u]\|<\delta_1,\quad \forall c\in\mathcal G_1$$ for some homomorphism $\phi$ to $A$, then
$$\|[(\phi\otimes {\rm {id}}_{M_d})(c), u\otimes 1_{M_d}]\|<\delta'/2$$ for any $c\in\mathcal G'\subseteq M_d(C)$, and
$$\|[\phi\otimes {\rm id}_{M_d}(Q), u\otimes 1_{M_d}]\|<\min\{\ep'', \delta'/2\}.$$

Let $B=QM_d(C)Q\otimes C(\T).$ It is a full hereditary \SCA\, of $M_d(C)\otimes C(\T).$
Choose a large finite subset ${\cal G}_2\subset C$ and a sufficiently small
$\dt_2>0$  such that, if $L: M_d(C)\otimes C(\T)\to M_d(A)$ is a unital ${\cal G}_2\times \{1, z\}$-$\dt_2$-multiplicative \morp\, and $[L]|_{\cal P}$ is well defined, then
$$
[L]|_{\cal P}=
[L|_B]|_{{\cal P}}.
$$
Note that if we assume that
\beq
{\rm Bott}(\phi, u)|_{\cal P}=0,
\eneq
then
\beq
{\rm Bott}(\phi\otimes {\rm id}_{M_d}, u\otimes 1_{M_d})|_{\cal P}=0.
\eneq
It then follows that we can choose a  larger ${\cal G}_2$  and smaller
$\dt_2$  so that
if
\begin{equation}\label{approx-comm-2}
\|[\phi(c), u]\|<\delta_2,\quad\forall c\in\mathcal G_2\andeqn
\mathrm{Bott}(\phi, u)|_\mathcal P=0,
\end{equation}
 we still have
\beq\label{AddBott}
{\rm Bott}(\phi|_{QM_d(C)Q}, \langle q(u\otimes 1_{M_d})q\rangle )|_{\cal P}=0,
\eneq
where $q=(\phi\otimes {\rm id}_{M_d})(Q).$

Note that $p_i, q_i\in M_k(QM_d(C)Q)\subseteq M_{k'}(C)$. Define ${\tilde{q}}=q\otimes 1_{M_k}$. Then there is a finite subset $\mathcal G_3\subseteq C$, and $\delta_3>0$ such that if
\beq\label{approx-comm-3}
&&\|[\phi(c), u]\|<\delta_3,\quad\forall c\in\mathcal G_3
\andeqn\\\nonumber
&&\hspace{-0.6in}\|\langle (1_{M_{k'}}-\phi(p_i)+\phi(p_i)1_{M_{k'}}\otimes u) (1_{M_{k'}}-\phi(q_i)+\phi(q_i)1_{M_{k'}}\otimes u^*)) \rangle-v_i\|<\gamma'/(8(k'R_1+\frac{1}{8})),
\eneq
for some $v_i\in CU(M_{k'}(A))$,
then
\begin{equation}
\|g_i-((1_{M_{k'}}-{\tilde{q}})+\langle {\tilde{q}}g_i{\tilde{q}}\rangle )\|<\gamma'/(4(k'R_1+\frac{1}{8})),
\end{equation} and
\begin{equation}
\|{g}'_i-\langle {\tilde{q}}g_i {\tilde{q}})\rangle\|<\gamma'/(4(k'R_1+\frac{1}{8})),
\end{equation}
where
$$g_i:=\langle (1_{M_{k'}}-\phi(p_i)+\phi(p_i)  u\otimes 1_{M_{k'}} ) (1_{M_{k'}}-\phi(q_i)+\phi(q_i) u^*\otimes 1_{M_{k'}}) \rangle$$
and
$${g'_i}:=\langle ({\tilde{q}}-\phi(p_i)+\phi(p_i) \langle {\tilde{q}} (u\otimes 1_{M_{k'}}){\tilde{q}}\rangle ) ({\tilde{q}}-\phi(q_i)+\phi(q_i) \langle {\tilde{q}}(u^*\otimes 1_{M_{k'}}){\tilde{q}} \rangle) \rangle.$$
Note that, in particular, one has
\begin{equation}
\|g_i-((1_{M_{k'}}-{\tilde{q}})+g'_i)\|<\gamma'/{2(k'R_1+\frac{1}{8})}.
\end{equation}
Then
\begin{equation}\label{n6/21}
\mathrm{dist}((1_{M_{k'}}-{\tilde{q}})+g'_i, CU(M_{k'}(A)))<\gamma'/{(k'R_1+\frac{1}{8})}.
\end{equation}
Since $TR(A)\le 1,$  it follows from Lemma 6.9 of \cite{LinTAI} that
$CU(M_{k'}(A))\subset U_0(M_{k'}(A)).$
It follows from the fact that $\gamma'<1$ and (\ref{n6/21}) that
$(1_{M_{k'}}-{\tilde{q}})+g'_i\in U_0(M_{k'}(A)).$ Since $A$ is a unital simple \CA\, with $\tr(A)\le 1,$  it has
stable rank one (Theorem 4.5 of \cite{LinTAI}). Therefore one has that $g_i'\in U_0(M_k(qM_d(A)q))$ (see 2.10 of \cite{Rief-JOT}).
Note that for any $\tau\in T(M_{k'}(A))$, one has
$$\tau({\tilde{q}})\geq \frac{k R_2}{k' R_1}>\frac{1}{k' R_1},$$  and hence
$$k'R_1[{\tilde{q}}]>[1_{M_k'}-{\tilde{q}}].$$
Then by Lemma 3.3 of \cite{Lin-hmtp}, one has
$$\mathrm{dist}(g'_i, CU(M_k(qM_d(A)q)))<(k'R_1+\frac{1}{8})\frac{\gamma'}{(k'R_1+\frac{1}{8})}=\gamma'.$$
That is,
\begin{equation}\label{AddBu}
\|\langle ({\tilde{q}}-\phi(p_i)+\phi(p_i) \langle {\tilde{q}} (u\otimes 1_{M_d}){\tilde{q}} \rangle ) ({\tilde{q}}-\phi(q_i)+\phi(q_i) \langle {\tilde{q}}(u^*\otimes 1_{M_d}){\tilde{q}} \rangle) \rangle-v'_i\|<\gamma',
\end{equation}
for some $v'_i\in CU(M_k(qM_d(A)q))$.

Put $\gamma=\gamma'/(8(k'R_1+\frac{1}{8})).$
Then, one asserts that $\dt=\min\{\delta_1, \delta_2, \delta_3\}$, $\eta$, $\gamma$, $\mathcal G_1\cup\mathcal G_2\cup\mathcal G_3$, $\mathcal P$, and $\mathcal Q$ satisfy the corollary.

Let $\phi: PM_n(C(X))P\to A$ be a unital homomorphism satisfies the conditions of the corollary for some unitary $u\in A$, where $A$ is a simple C*-algebra with $TR(A)\leq 1$.

Put $v=\phi\otimes 1_{M_d}(V)\in M_d(A)$. The restriction of $\phi\otimes 1_{M_d}$ to $QM_d(C)Q$ (which is isomorphic to $M_{r}(C(X))$) is a unital homomorphism to $qM_d(A)q$, which has $TR(qM_d(A)q)\leq 1$, and one also has that $vv^*\leq q$ and $v^*v=1_A$.

Since
$\|[\phi(c), u]\|<\delta_1$, $\forall c\in\mathcal G_1,$
one has  $$\|[(\phi\otimes 1_{M_d})(c), u\otimes 1_{M_d}]\|<\delta'/2,\quad\forall c\in\mathcal G'.$$ In particular,
$$\|[(\phi\otimes 1_{M_d})|_{QM_d(C)Q}(c), \langle q(u\otimes 1_{M_d})q \rangle]\|<\delta',\quad\forall c\in\mathcal G'.$$ 

Since $\phi$ also satisfies \eqref{approx-comm-2} (and \eqref{approx-comm-3}), Equations \eqref{AddBott} (and \eqref{AddBu}) are also satisfied.

Since $\mu_{\tau\circ\phi}(O_a)\geq\Delta(a)$ for any open ball $O_a$ on $X$ with radius $1>a>\eta$ and any $\tau\in T(A)$, one then also has that $$\mu_{\tau\circ((\phi\otimes 1_{M_d})|_{QM_d(C)Q})}(O_a)\geq\Delta(a)$$ for any open ball $O_a$ on $X$ with radius $1>a>\eta$ and any tracial state $\tau$ on $qM_d(A)q$.

Then, applying the corollary to $QM_d(C)Q$ and $qM_d(A)q$, there is a path of unitaries $\{U(t);\ t\in[0, 1]\}\subseteq qM_d(A)q$ such that $$U(0)=1_{qM_d(A)q},\quad U(1)=\langle q(u\otimes 1_{M_d})q \rangle, $$ and
$$\|[\phi\otimes 1_{M_d}(VfV^*), U(t)]\|<\ep'',\quad\forall f\in \F.$$

Denote by $e=vv^*\in qM_d(A)q$. Note that $\|[e, U(t)]\|<\ep''<\frac{1}{4}$. One considers the path of unitaries
$$w(t)= \langle eU(t)e\rangle \in eM_d(A)e,\quad t\in[0, 1].$$
Then $$w(0)=r,\quad \|w(1)-e(u\otimes 1_{M_d})e\|<\ep'/2,$$
$$\|[v(\phi\otimes 1_{M_d})(f)v^*, w(t)]\|<2\ep'+2\ep''<\ep,\quad\forall f\in\F.$$

Consider the path of unitaries $u(t):=v^*w(t)v\in A.$ One then has that $$u(0)=1_A,\quad \|u(1)-u\|<\ep'/2+\ep''<\ep',$$ and
$$\|[\phi(f), u(t)]\|<\ep,\quad\forall f\in\F.$$
%
\end{proof}

\begin{rem}
In fact, the corollary above holds for the case that $X$ is
a general compact metric space. One can use a standard argument reducing
the general case to the case that $X$ is a compact subset of a finite CW-complex.
\end{rem}

The following lemma is due to N.C. Phillips. (See the proof of 3.8 of \cite{Phil-AJM}.)
\begin{lem}\label{lem64}
Let $A$ be a unital C*-algebra and $2>d>0$. Let $u_0, u_1, ..., u_n$ be $n+1$ unitaries in $A$ such that $$u_n=1\quad\textrm{and}\quad\|u_i-u_{i+1}\|\leq d\quad i=0, 1,..., n-1.$$
Then there exists a unitary $v\in CU(\mathrm{M}_{2n+1}(A))$ with exponential length no more than $2\pi$ such that $$\|(u_0\oplus1_{\mathrm{M}_{2n}(A)})-v\|
\leq d.$$
\end{lem}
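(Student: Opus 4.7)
My plan is to exhibit $v$ as a single commutator $[V,U]$ in $M_{2n+1}(A)$, where $U$ is a scalar cyclic permutation matrix. The payoff of this construction is that the exponential length of such a commutator is controlled by twice the exponential length of $U$, and the latter is at most $\pi$ regardless of $n$ or of how far $u_0$ is from the identity.

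To set up the construction, I would put $v_i:=u_i u_{i+1}^*$ for $i=0,1,\ldots,n-1$; then $\|v_i-1\|=\|u_i-u_{i+1}\|\le d<2$, and the hypothesis $u_n=1$ gives the telescoping identity $u_0=v_0v_1\cdots v_{n-1}$. Next I would define $V:=\diag(u_0,u_1,\ldots,u_n,1,\ldots,1)\in M_{2n+1}(A)$, placing $u_0,\ldots,u_n$ as the first $n+1$ diagonal entries, and let $U\in M_{2n+1}(\C)\subset M_{2n+1}(A)$ be the cyclic permutation on the first $n+1$ coordinates extended by the identity on the last $n$. Setting $v:=[V,U]=VUV^{-1}U^{-1}$, a direct matrix computation using $u_n=1$ gives
$v=\diag(u_0,\,v_0^{-1},\,v_1^{-1},\,\ldots,\,v_{n-1}^{-1},\,1,\,\ldots,\,1).$
From this explicit formula, $\|v-(u_0\oplus 1_{M_{2n}(A)})\|=\max_i\|v_i^{-1}-1\|\le d$ is immediate, and $v\in DU(M_{2n+1}(A))\subseteq CU(M_{2n+1}(A))$ is automatic since $v$ is a single commutator.

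The main obstacle, and the only step that needs a genuine argument, is the exponential length bound. The eigenvalues of the cyclic permutation on $n+1$ points are the $(n+1)$-th roots of unity, all of which admit logarithms in $[-\pi,\pi]$; hence one may write $U=\exp(iH)$ with $H=H^*$ a scalar matrix in $M_{2n+1}(\C)$ satisfying $\|H\|\le\pi$. I would then use the path $w(t):=V\exp(itH)V^{-1}\exp(-itH)$, $t\in[0,1]$, which runs from $1$ at $t=0$ to $v$ at $t=1$. A routine product-rule computation yields $w'(t)w(t)^{-1}=ViHV^{-1}-(V\exp(itH)V^{-1})(iH)(V\exp(itH)V^{-1})^{-1}$, so $\|w'(t)w(t)^{-1}\|\le 2\|H\|\le 2\pi$ for every $t\in[0,1]$. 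Integrating, the Finsler length of this path is at most $2\pi$, which is an upper bound for the exponential length of $v$. The crucial feature here is that $V$, which may itself have arbitrarily large exponential length (since $u_0$ can be far from $1$), contributes nothing to the estimate; only the scalar factor $U$ does, and its exponential length is bounded independently of $n$ and of $\|u_0-1\|$.
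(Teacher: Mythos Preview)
Your proof is correct and follows the same commutator-with-cyclic-permutation idea that underlies Phillips' argument (the paper itself gives no proof, referring instead to the proof of Lemma~3.8 of \cite{Phil-AJM}). As a minor observation, your construction lives entirely in the upper-left $(n{+}1)\times(n{+}1)$ block---both $V$ and $U$ act as the identity on the last $n$ coordinates---so you have in fact established the stronger conclusion with $M_{n+1}(A)$ in place of $M_{2n+1}(A)$; the stated version follows by embedding.
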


In the rest of the paper, unless otherwise specified, $z$ will be the identity function
on the unit circle.

\begin{thm}\label{BB-exi}
Let $C=\mathrm{C}(X)$ with $X$ a compact metric space, let
 $G\subset K_0(C)$ be a finitely generated
 subgroup. Write
 ${G}=\Z^k\oplus {\mathrm{Tor}}({G})$ with
 $\Z^k$ generated by
$$
\{x_{1}=[p_1]-[q_1], x_2=[p_2]-[q_2], ..., x_{k}=[p_{k}]-[q_{k}]\},
$$
where $p_i, q_i\in M_n({\mathrm C}(X))$ (for some integer $n\ge 1$) are projections,
$i=1,...,k$.

Let $A$ be a simple C*-algebra with $TR(A)\leq 1$. Suppose that $\phi: C\to A$ is a monomorphism. Then, for any finite subsets  $\mathcal F\subseteq C$ and $\mathcal P\subseteq \underline{K}(C)$, any $\ep>0$ and $\gamma>0$, any homomorphism $$\Gamma: \Z^k \to U_0(M_n(A))/CU(M_n(A)),$$ there is a unitary $w\in A$ such that
\beq
\|[\phi(f), w]\|<\ep\quad\forall f\in\F,\,\
\mathrm{Bott}(\phi, w)|_{\mathcal P}=0,\andeqn
\eneq
$$\mathrm{dist}(
\overline{\langle({\mathbf 1}_n-\phi(p_i)+\phi(p_i)\tilde{w})({\mathbf 1}_n-\phi(q_i)+\phi(q_i)\tilde{w}^*\rangle},
 \Gamma(x_i))<\gamma, \quad\forall 1\leq i\leq k,$$
{ where $\tilde{w}=\mathrm{diag}(\overbrace{w, ..., w}^n)$.}

\end{thm}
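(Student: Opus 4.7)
The plan is to lift the prescribed invariants to affine functions on $T(A)$ and then to design a self-adjoint element of $A$ whose tracial behavior against $\phi(p_i)-\phi(q_i)\in M_n(A)$ realizes the target data. Via the short exact sequence (\ref{Texact-2}), and since $\Gamma$ takes values in $U_0(M_n(A))/CU(M_n(A))$, the isomorphism $\overline{\mathrm{Det}}: U_0(M_n(A))/CU(M_n(A))\to \Aff(T(A))/\overline{\rho_A(K_0(A))}$ lets us lift each $\Gamma(x_i)$ to an affine continuous function $h_i\in \Aff(T(A))$. The task becomes to produce $w\in A$ with $\|[\phi(f),w]\|<\ep$ on $\mathcal F$, with $\mathrm{Bott}(\phi,w)|_{\mathcal P}=0$, and with $\overline{\mathrm{Det}}(v_i)$ within $\gamma$ of $[h_i]$ in $\Aff(T(A))/\overline{\rho_A(K_0(A))}$ for each $i$, where $v_i=(\mathbf 1_n-\phi(p_i)+\phi(p_i)\tilde w)(\mathbf 1_n-\phi(q_i)+\phi(q_i)\tilde w^*)$.

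\textbf{Matrix subalgebra and ansatz for $w$.} Since $TR(A)\le 1$, $A$ is tracially approximately divisible (5.4 of \cite{LinTAI}). For large $K\in\mathbb N$ and small $\eta>0$, choose a unital C*-subalgebra $B\cong M_K(\mathbb C)\subseteq A$ with unit $e$ such that $\phi(\mathcal F')$ almost commutes with the unit ball of $B$, where $\mathcal F'\supseteq\mathcal F$ contains the matrix entries of each $p_i,q_i$ plus sufficient elements to make $\mathrm{Bott}(\phi,\cdot)|_{\mathcal P}$ well defined and additive along paths, and so that $\tau(1-e)<\eta$ for every $\tau\in T(A)$. Take $w=\exp(2\pi iH)$ for a self-adjoint $H\in A$ to be designed in the next step, with straight-line path $w(t)=\exp(2\pi itH)$ from $1$ to $w$. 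Provided $H$ almost commutes with $\phi(\mathcal F')$, so does every $w(t)$, giving $\|[\phi(f),w(t)]\|<\ep$ and --- via standard Bott calculus together with the vanishing at $t=0$ --- the identity $\mathrm{Bott}(\phi,w)|_{\mathcal P}=0$.

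\textbf{Matching the tracial data and the main obstacle.} A direct path-integral computation of the de la Harpe--Skandalis determinant, using the near commutation of $H$ with $\phi(p_i),\phi(q_i)\in M_n(A)$, gives for every $\tau\in T(A)$
\begin{equation*}
\overline{\mathrm{Det}}(v_i)(\tau)\approx \tau\bigl((\phi(p_i)-\phi(q_i))\tilde H\bigr)\pmod{\overline{\rho_A(K_0(A))}},
\end{equation*}
where $\tilde H=\mathrm{diag}(H,\ldots,H)\in M_n(A)$ and $\tau$ is extended canonically to $M_n(A)$. The heart of the proof --- and the main obstacle --- is to design $H\in A$ so that this pairing realizes $h_1,\ldots,h_k$ simultaneously up to tolerance $\gamma$. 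A naïve choice $H\in B$ fails: the near commutation forces $\tau(\phi(f)H)\approx\tau(\phi(f))t(H)$, producing only scalar multiples of $\rho_A(\phi_*(x_i))$, which are trivial in the quotient. Instead one builds $H$ from projections distributed across $A$, of the form $H=\sum_j a_j Q_j$ with each $Q_j$ almost commuting with $\phi(\mathcal F')$ but having prescribed tracial values against $\phi(p_i)-\phi(q_i)$; this exploits the rich projection structure of $A$ coming from $TR(A)\le 1$ (strict comparison and density of $\rho_A(K_0(A))$ in the relevant sense), the monomorphism hypothesis on $\phi$ (so that the $\phi_*(x_i)$ are suitably distinguishable), and the matrix subalgebra $B$ as a reservoir from which to carve the $Q_j$. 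Solving the resulting $k$-parameter linear system subject to the approximate-commutation constraints, while keeping the Bott class zero, is the delicate technical step.
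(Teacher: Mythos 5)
Your translation of the problem into the affine/determinant picture is the right first move, and your formula $\overline{\mathrm{Det}}(v_i)(\tau)\approx\tau((\phi(p_i)-\phi(q_i))\tilde H)$ is the correct heuristic. You also correctly diagnose why a block $H\in B\cong M_K(\mathbb C)$ fails (tracial factorization collapses everything to $\overline{\rho_A(K_0(A))}$). But the proof stops precisely at the one step that carries all the weight: you need to actually construct $H$ (equivalently, the system of projections $Q_j$) that (a) almost commutes with $\phi(\mathcal F')$, (b) pairs with $\phi(p_i)-\phi(q_i)$ to produce $k$ independently prescribable values in $\Aff(T(A))/\overline{\rho_A(K_0(A))}$ simultaneously, and (c) keeps $\mathrm{Bott}(\phi,w)|_{\mathcal P}$ exactly zero rather than merely controlling it approximately. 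That construction is not a ``$k$-parameter linear system''; it is the entire content of the theorem, and nothing in the sketch shows such $Q_j$ exist. In particular it is not clear how one forces the Bott class to vanish exactly while also hitting a prescribed image in $U_0(M_n(A))/CU(M_n(A))$ --- those two constraints live on the same elements $(\mathbf 1_n-\phi(p_i)+\phi(p_i)\tilde w)(\cdots)^*$ and must be decoupled.

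The paper's proof does not try to build $w$ by hand. It instead (i) approximates $C$ by $C'=C(X')$ with $X'$ a finite CW complex, (ii) uses $TR(A)\le 1$ to cut out an interval algebra $I\subseteq A$ absorbing almost all of the trace, (iii) invokes an existence theorem (Lemma 9.6 of \cite{Lin-LAH}) to produce an approximately multiplicative map $\Phi:C'\otimes C(\T)\to\bigoplus M_{m_i}(C[0,1])$ realizing a prescribed element $\kappa$ of $\mathrm{Hom}_\Lambda(\underline K(C'\otimes C(\T)),\underline K(A))$ together with a prescribed $\Phi^{\ddag}$-value on $s_1(K_1(C'\otimes C(\T)))$ --- this is where the target $\Gamma$ is injected, via the divisibility of $U_0(I)/CU(I)$ --- then (iv) sets $w'=(1-p)\oplus\langle\Phi(1\otimes z)\rangle\oplus 1$ and $\psi=L_0\oplus\Phi|_{C'\otimes 1}\oplus\Psi$, and finally (v) applies the uniqueness theorem (Theorem 5.3 of \cite{Lin-AU11}) to conjugate $\psi\circ\pi$ to $\phi$, producing $w$ as a conjugate of $w'$. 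In this scheme the exact vanishing of $\mathrm{Bott}(\phi,w)$ comes for free from $\kappa\circ\boldsymbol\beta\circ\pi=0$, and the $CU$-distance estimate comes from $\Phi^{\ddag}$ agreeing with $\Gamma_1$. Without access to such an existence-plus-uniqueness pair of theorems (or an equivalent substitute), your direct exponential ansatz cannot be completed, so the proposal has a genuine gap.
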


\begin{proof}

Without loss of generality, we may assume that
$\ep<1/2.$
Denote by $$\Delta(a)=\inf\{\mu_{\tau\circ\phi}(O_a);\ \tau\in\mathrm{T}(A), \textrm{$O_a$ is an open ball of $X$ with radius $a$}\}.$$
Since $A$ is simple and $\mathrm{T}(A)$ is compact, $\Delta(a)$ is a nondecreasing function from $(0, 1)$ to $(0, 1)$.

Let $\eta'>0$, $\delta'>0$, $\mathcal G'\subseteq C$, $\mathcal H'\subseteq C_{s.a.}$, $\mathcal P'\subseteq \underline{K}(C)$, $\mathcal U'\subset U_c(K_1(C))$, $\gamma_1>0$, $\gamma_2>0$ be the finite subsets and constants of Theorem 5.3 of \cite{Lin-AU11} with respect to $X$, $\Delta(r/3)/2$, $\mathcal F$, and $\ep/2$. Without loss of generality, one may assume that $\F\subseteq\mathcal G'$ and $\delta'<\ep/4$.

Let $\delta''$ and $\mathcal H''\subseteq C$ be the constant and finite subset of lemma 3.4 of \cite{Lin-AU11} with respect to $X$, $\Delta$, and $\eta'/3$.

Since $X$ is an inverse limit of finite CW-complexes, there is a C*-algebra $C'\cong\mathrm{C}(X')$ for a finite CW-complex $X'$ and a homomorphism $\iota: C' \to C$ such that
\beq
G\subseteq \iota_{*0}(K_0(C')),\,\,\,\quad \{p_i, q_i;\ i=1, ..., k\}\subseteq \iota(M_n(C')),\andeqn\\
{\cal P}'\subset  [\imath]({\cal P}_1'),
\eneq
where ${\cal P}_1'\subset \underline{K}(C')$ is a
finite subset.

Furthermore, one may choose $X'$ such that there is a completely positive linear map $\pi: C\to C'$ so that if $\psi: C'\to A$ is $(\mathcal G'')$-$\delta'/2$-multiplicative  (for some finite subset ${\cal G}''\subset C',$) then $\psi\circ\pi$ is $\mathcal G'$-$\delta'$-multiplicative, and moreover, $$\|\iota\circ\pi(f)-f\|<\min\{\ep/8, \gamma_1/4\},\quad\forall f\in\F\cup \mathcal H'\cup\mathcal H'', $$
and $[\pi]({\mathcal P}')\subseteq {\cal P}_1'$ is well defined.

Denote by $\mathcal P''={\mathcal P'}_1\cup \boldsymbol{\bt}({\mathcal P'}_1)\subseteq\underline{K}(C'\otimes\mathrm{C}(\mathbb T))$, and then denote by $N_1$ the integers of Lemma 9.6 of \cite{Lin-LAH} with respect to $C'\otimes\mathrm{C}(\mathbb T)$, $\pi(\mathcal G')\otimes\{1, z\}$ (in the place of $\mathcal G$), $\delta'/2$ (in the place of $\delta$), and $\mathcal P''$ (in the place of $\mathcal P$), where $z$ denotes the identity function on $\mathbb T$.

Let $M$ (in place of $N$) be the constant of Theorem 2.1 of \cite{Li-interval} with respect to $X'$, $\mathcal H'\cup\mathcal H''$ and $\gamma_1'/2$. Without loss of generality, one may assume that $M>8/(N_1\gamma)$.

Set
\beq
u_i=(({\mathbf 1}_n-p_i)+(p_i\otimes z))(({\mathbf 1}_n-q_i)+(q_i\otimes z))^*
\,\,\, i=1,2,...,k.
\eneq
We may assume that
there are unitaries  $u_i'\in M_n(C')$ such that
${\imath}(u_i')=u_i,$ $i=1,2,...,k.$

{By Theorem 7.6 of \cite{Lin-LAH}, the group $U_0(A)/CU(A)$ is canonically isomorphic to the group $U_0(M_n(A))/CU(M_n(A)).$}  {Therefore we can choose} unitaries $v_i\in U_0(A)$ such that $\Gamma(x_i)=\overline{v_i+{1_{n-1}}}$ for each $1\leq i\leq k$, and choose $T>0$ such that $\mathrm{cel}(v_i)<T,\quad i=1, ..., k.$

Also write $K_1(C')=\Z^{t}\oplus\mathrm{Tor}(K_1(C'))$ and $K_0(C')=\Z^{k'}\oplus\mathrm{Tor}(K_0(C'))$, and let $$\{y_1=[e_1], ..., y_{r'}=[e_{r'}], y_{r'+1}, ..., y_{k'}\}$$ be the standard generators of $\Z^{k'}$ with $y_i\in\ker\rho_{C'},$  $i=r'+1,...,k'$, and $e_i$, $i=1, ..., r'$, projections.

By choosing a larger ${\cal G}''$ and a smaller
$\dt',$ we may assume that, for any unital ${\cal G}''\cup\{1, z\}$-$\dt'$-multiplicative \morp\, $L'$ from $C'$ to an arbitrary C*-algebra induces a well-defined
\hm\, on $s_1(K_1(C'\otimes C(\T))).$

Since $TR(A)\leq 1$, there is an interval algebra $I\subset A$ with $p=1_I$ and $\mathcal G''$-$\delta'/4$-multiplicative completely positive linear maps $L_0: C'\to (1-p)A(1-p)$ and $L_1: C'\to I$ such that
\begin{enumerate}
\item $\|(L_0(\pi(f))+L_1(\pi(f)))-\phi(f)\|<\min\{\ep/8, \dt'/16, \gamma_1/4\}$, for any $f\in \F\cup\mathcal H'\cup\mathcal H''$,
\item $[\phi]|_{\mathcal P'}=[L_0\circ\pi]|_{\mathcal P'}+[L_1\circ\pi]|_{\mathcal P'}$,
\item $I=\bigoplus_i M_{n_i}(\mathrm{C}([0, 1]))$ with $n_i>\max\{16(\mathrm{dim}(X)+1)N_1/\gamma_1, 2M-2N_1(\mathrm{dim}(X)+1)\}$,
\item there are unitaries $v_i'\in (1-p)A(1-p)$ and $v_i'' \in I$ such that $\mathrm{cel}(v'_i\oplus p)<\gamma/4$ in $A$ (by Lemma \ref{lem64}) and $$\|v_i-(v_i'+v_i'')\|<\gamma/4,\quad i=1, ..., k,$$
\item \label{only-pf} moreover, by applying 2.21 of \cite{Lin-LAH}, one may assume that for any $r'+1\leq i\leq k'$
$$|\tau(L_1(y_i))|<\gamma_1/8N_2,\quad \forall\tau\in\mathrm{T}(I).$$
\end{enumerate}
There is a subgroup $G_0\subset \Z^{k'}\subset K_0(C')$ such that
$G_0\cong \Z^k$ and generators $\{g_1, g_2,...,g_k\}\subset G_0$ such that
$\imath_{*0}(g_i)=x_i,$ $i=1,2,...,k.$ Without loss of generality,
we may assume that $[u_i']=g_i,$ $i=1,2,...,k.$
Define a \hm\, $\Gamma_1: K_0(C')\to U_0(I)/CU(I)$ as follows:
    First define $\Gamma_1(g_i)=\overline{v_i''},$ $i=1,2,...,k.$
    This gives a \hm\, from $G_0\to U_0(I)/CU(I).$ Since $U_0(I)/CU(I)$
    is divisible, it extends to a \hm\,  $\Gamma_1$ from $K_0(C')$ to $U_0(I)/CU(I).$
    Note that since $U_0(I)/CU(I)$ is also torsion free,
    $\Gamma_1|_{{\mathrm{Tor}}(K_0(C'))}=0.$

Denote by $m_i=n_i\gamma_1/8+2N_1(\mathrm{dim}(X)+1)$. Note that $$n_i-m_i>M\quad\mathrm{and}\quad m_i/n_i<\gamma_1/4.$$ By Theorem 2.1 of \cite{Li-interval}, there is a homomorphism $$\Psi: C'\to\bigoplus_i M_{n_i-m_i}(\mathrm{C}([0, 1]))\subseteq I$$ such that
$$|\tau\circ\Psi(h)-\tau\circ L_1(\pi(h))|<\gamma_1/2,\quad \forall h\in\mathcal H\cup\mathcal H',\ \forall \tau\in\mathrm{T}(I).$$
Define
$$\kappa=([L_1]-[\Psi])\oplus 0\in\mathrm{Hom}_{\Lambda}(\underline{K}(C'\otimes\mathrm{C}(\mathbb T)), \underline{K}(A)),$$ where $\underline{K}(C'\otimes\mathrm{C}(\mathbb T))$ is identified as $\underline{K}(C')\oplus \boldsymbol{\beta}(\underline{K}(C'))$.

Note that $K_1(C'\otimes\mathrm{C}(\mathbb T))\cong
K_1(C')\oplus K_0(C').$ It may also be written as
$\Z^t\oplus\Z^{k'}\oplus\mathrm{Tor}(K_1(C'\otimes \mathrm{C}(\T))),$
where $k'$ is the rank of of $K_0(C')$.

Define a map $\lambda: \Z^t\oplus \Z^{k'}\to U_0(I)/CU(I)$ as follows:
\beq
\lambda(x)=L^{\ddag}\circ s_1(x)(\Psi^{\ddag}(x^*))\tforal x\in K_1(C')
\andeqn\\
\lambda|_{\Z^{k'}}=\Gamma_1|_{\Z^{k'}}.
\eneq

Note that for any $\tau\in\mathrm{T}(I)$ and any $i=r'+1, ..., k'$, one has that
$$
|\tau(\kappa(y_i))|
=|\tau(L_1(y_i))-\tau(\Psi(y_i))|
=|\tau(L_1(y_i))|<\delta.$$
 By Lemma 9.6 of \cite{Lin-LAH}, there is a ${\mathcal G}''\otimes\{1, z\}$-$\delta'/4$-multiplicative map
$$\Phi: C'\otimes\mathrm{C}(\mathbb T)\to \bigoplus_i M_{m_i}(\mathrm{C}([0, 1]))$$ such that
$$[\Phi]=\kappa\quad \textrm{and}\quad \Phi^{\ddag}|_{s_1(\Z^t\oplus\Z^{k'})}=\lambda.$$

Denote by $$w'=(1-p)\oplus \langle \Phi(1\oplus z)\rangle \oplus (\bigoplus_i 1_{M_{n_i-m_i}})$$ and $\psi: C'\to A$ by
$$\psi= L_0\oplus\Phi|_{C'\otimes 1}\oplus\Psi.$$

Since $\Phi$ is ${\mathcal G}''\otimes\{1, z\}$-$\delta'/4$-multiplicative, it is clear that
$$\|[\psi(\pi(f)), w']\|<\ep/4$$ and
$$\mathrm{Bott}(\psi\circ\pi, w')=\kappa\circ\beta\circ\pi=0.$$
Moreover, in ${U(M_n(A))}/CU(M_n(A))$, one has
\begin{eqnarray*}
 &&\mathrm{dist}(\overline{\langle \psi \otimes \id_n(u_i')\rangle},
\Gamma(x_i)) \\
&\approx_{\gamma/4} &
\mathrm{dist}(\overline{(1-p)\otimes 1_n \oplus \langle\Phi\otimes \id_n(u_i')\rangle
 \oplus
 (\bigoplus_i {1_{n_i-m}})\otimes 1_n},
 \overline{(v_i'\oplus v_i'')\oplus  1_{n-1}}) \\
&=&\mathrm{dist}(\overline{(1-p)\otimes 1_n\oplus\Gamma_1([u_i'])\oplus (\bigoplus_i {1_{n_i-m}})\otimes 1_n}, \overline{v_i'\oplus v_i''\oplus 1_{n-1}})\\
&=&\mathrm{dist}(\overline{(1-p)\otimes 1_n\oplus v_i''\oplus(p\otimes 1_{n-1})}, \overline{v_i'\oplus (1-p)\otimes 1_{n-1}\oplus v_i''\oplus p\otimes 1_{n-1}})\\
&=&\mathrm{dist}(\overline{1_n}, \overline{v_i'\oplus (1_{n-1}+p))}\approx_{\gamma/4} 0.
\end{eqnarray*}

On the other hand, the map $\psi\circ\pi$ is $\mathcal G'$-$\delta'$-multiplicative, and
$$[\psi\circ\pi]|_{\mathcal P'}=[L_0\circ\pi]|_{\mathcal P'}+[\Psi|_{C\otimes 1}\circ\pi]|_{\mathcal P'}+[\Phi\circ\pi]|_{\mathcal P'}=[L_0\circ\pi]|_{\mathcal P'}+[L_1\circ\pi]|_{\mathcal P'}=[\phi]|_{\mathcal P'}.$$
One also has that, for any $u\in\mathcal U'$,
\begin{eqnarray*}
&& \mathrm{dist}(\overline{\phi(u)}, \overline{\langle \psi(\pi(u)) \rangle})\\
&\approx_{\gamma_2}& \mathrm{dist}(\overline{ \langle L_0(\pi(u))\oplus L_1(\pi(u))\rangle }, \overline{\langle L_0(\pi(u))\rangle \oplus \langle (L_1(\pi(u))\Psi(\pi(u^*)))\rangle \oplus\Psi(\pi(u))})\\
&=& 0,
\end{eqnarray*}
and for any $h\in \mathcal H'\cup\mathcal H''$,
\begin{eqnarray*}
|\tau(\phi(h))-\tau(\psi(\pi(h)))|&\approx_{\gamma_1/4}& |\tau(L_0(\pi(h))+L_1(\pi(h)))-\tau(L_0(\pi(h))+\Psi(\pi(h)\otimes1)+
\Phi(\pi(h)))|\\
&\approx_{\gamma_1/2}&\tau(\Psi(\pi(h)\otimes1))\approx_{\gamma_1/4}0.
\end{eqnarray*}
It then follows from Lemma 3.4 of \cite{Lin-AU11} that $$\mu_{\tau\circ\psi\circ\pi}(O_r)>\Delta(r/3)/2$$ for any $r>\eta'$.
By Theorem 5.3 of \cite{Lin-AU11}, there is a unitary $v$ such that
$$\|\phi(f)-v^*\psi(\pi(f)) v\|<\ep/2,\quad\forall f\in\F.$$ Then the unitary $w:=v^*w'v$ satisfies the lemma.
\end{proof}

\begin{cor}\label{BB-exi-mat}
The statement of Theorem \ref{BB-exi} still holds if $\mathrm{C}(X)$ is replaced by $PM_n(\mathrm{C}(Y))P$ for a compact subset  $Y$
of a finite CW-complex and a projection $P$ in $M_n(\mathrm{C}(Y))$.
\end{cor}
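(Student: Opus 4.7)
The plan is to mirror the Morita-equivalence reduction that was used in the proof of Corollary \ref{hp-mat}. Since $Y$ is a compact subset of a finite CW-complex, one can find an integer $d$, a projection $Q \in M_d(PM_n(C(Y))P)$ and a partial isometry $V \in M_d(PM_n(C(Y))P)$ with $QM_d(PM_n(C(Y))P)Q \cong M_r(C(Y))$, $VV^* \leq Q$ and $V^*V = \mathrm{diag}(P, 0, \ldots, 0)$. Writing $\tilde\phi = \phi \otimes \mathrm{id}_{M_d}$, $q = \tilde\phi(Q)$ and $v = \tilde\phi(V)$, the restriction $\psi := \tilde\phi|_{QM_d(C)Q}$ is a unital monomorphism from $M_r(C(Y))$ into the corner $qM_d(A)q$, which is again a unital simple C*-algebra with $\tr \le 1$. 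This reduces the problem to the matrix-algebra-over-$C(Y)$ case, which is handled by the same proof as Theorem \ref{BB-exi} (the ingredients used, in particular Theorem 5.3 of \cite{Lin-AU11}, Lemma 9.6 of \cite{Lin-LAH} and Theorem 2.1 of \cite{Li-interval}, all apply to matrix algebras over $C(Y)$).

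Next I would translate the input data. The generators $x_i = [p_i] - [q_i]$ of $\mathbb{Z}^k$ correspond, via the Morita equivalence, to elements $\tilde x_i = [\tilde p_i] - [\tilde q_i]$ in $K_0(M_r(C(Y)))$, and the finite subsets $\mathcal F$ and $\mathcal P$ are transported in the obvious way (using $f \mapsto V f V^*$ for elements and the induced isomorphism on total $K$-theory). By Remark \ref{alg-iso} the quotient $U_0/CU$ does not depend on matrix size, so the given $\Gamma$ induces a homomorphism $\tilde\Gamma$ into $U_0(M_{n'}(qM_d(A)q))/CU(M_{n'}(qM_d(A)q))$ for the appropriate $n'$. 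Applying Theorem \ref{BB-exi} (in its $M_r(C(Y))$ form) to $\psi$ with data $\tilde{\mathcal F}, \tilde{\mathcal P}, \tilde\Gamma$ and suitably chosen $\tilde\ep, \tilde\gamma$, produces a unitary $W \in qM_d(A)q$ satisfying the corresponding commutator, Bott, and $CU$-distance conditions.

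Finally I would transfer $W$ back: set $e = vv^* \le q$, form $\langle eWe \rangle \in eM_d(A)e$ (which differs from $W$ on this corner by at most a fixed multiple of $\tilde\ep$, once $VV^*$ is included in $\tilde{\mathcal F}$), and define $w := v^* \langle eWe \rangle v \in A$. Since $v^*v = 1_A \oplus 0$, this is a unitary in $A$, and the commutator and Bott conditions for $(\phi, w)$ follow from those for $(\psi, W)$ by direct estimates. The main obstacle I expect is the transfer of the $\Gamma$-distance condition: one must pass from a bound on $\mathrm{dist}(\overline{\langle({\bf 1}_{n'} - \psi(\tilde p_i) + \psi(\tilde p_i)\tilde W)\cdots\rangle},\tilde\Gamma(\tilde x_i))$ inside $qM_d(A)q$ to the corresponding bound for $\overline{\langle(\mathbf 1_n - \phi(p_i) + \phi(p_i)\tilde w)\cdots\rangle}$ inside $A$. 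As in the proof of Corollary \ref{hp-mat}, this uses Lemma \ref{lem64} to control the orthogonal complement of $VV^*$ together with Lemma 3.3 of \cite{Lin-hmtp} to pass between $CU$ of a corner and $CU$ of the whole matrix algebra (introducing a multiplicative factor of the form $k'R_1 + 1/8$). With these tools the choice of $\tilde\gamma$ as a fraction of $\gamma$ is determined by the same book-keeping used in the proof of \ref{hp-mat}, and the argument then concludes.
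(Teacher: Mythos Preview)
Your proposal is correct and follows essentially the same Morita-equivalence reduction as the paper's proof: first observe the result holds for $M_r(C(Y))$, then use the projection $Q$ and partial isometry $V$ to pass to a corner $qM_d(A)q$, apply the matrix-algebra case there with a translated $\Gamma'$, and pull the resulting unitary back to $A$ via $w=\langle v^*uv\rangle$, controlling the $CU$-distance transfer with Lemma~3.3 of \cite{Lin-hmtp}. One small correction: Lemma~\ref{lem64} is not actually used in the transfer step (nor in the proof of Corollary~\ref{hp-mat}); the orthogonal complement $q\otimes 1_m - e\otimes 1_m$ contributes the identity in that block, so the decomposition $g_i \approx (q\otimes 1_m - e\otimes 1_m) + g_i'$ together with stable rank one and Lemma~3.3 of \cite{Lin-hmtp} suffices.
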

\begin{proof}
The corollary clearly holds for $C=M_n(C(X))$ (in this case, $X$ is even not required to be finite dimensional). In what follows we will use this case of the corollary to prove the general case.

Assume that $C=PM_n(C(X))P$. {As in the proof of \ref{hp-mat},} without loss of generality, we may assume that $P(x)\not=0$ for all $x\in  X.$ {Since $X$ is a compact subset of a finite CW-complex,} there is an integer $d$ and a projection $Q\in M_d(PM_n(C(X))P)$ such that
$$QM_d(PM_n(C(X))P)Q\cong M_{r}(C(X))$$ for some integer $r$. Note that
$Q(x)\not=0$ for all $x\in X.$

Without loss of generality, one may assume that $P \preceq Q$, that is, there is also a partial isometry $V\in M_d(PM_n(C(X))P)$ such that $VV^*\leq Q$ and $V^*V=\mathrm\{P, 0, ..., 0\}$. In particular, $V$ induces an isomorphism between $PM_n(C(X))P$ and the unital hereditary subalgebra of $QM_d(PM_n(C(X))P)Q$ generated by $VV^*$.

Fix $d$, $Q$, and $V$.

Since $X$ is compact, there is an integer $l\geq 1$ such that $X=X_1\sqcup\cdots\sqcup X_l$ { and} the ranks of the restrictions of $P$ and $Q$ to each $X_i$, $1\leq j\leq l$, are constant. Denote by $P_j$ and $Q_j$ the restrictions of $P$ and $Q$ to $X_j$ respectively.
Let $R=\max_{1\le j\le 1}\{{\rm rank} Q_j\}.$

Let $G\subseteq K_0(C)$ be a finitely generated group with a fixed decomposition $G=\mathbb Z^k\oplus\mathrm{Tor}(G)$ with
 $\Z^k$ generated by
$$
\{x_{1}=[p_1]-[q_1], x_2=[p_2]-[q_2], ..., x_{k}=[p_{k}]-[q_{k}]\},
$$
where $p_i, q_i\in M_m(C)$ (for some integer $m\ge 1$) are projections,
$i=1,...,k$. 

Let $A$ be a unital simple C*-algebra with $TR(A)\leq 1$, and let $\phi: C\to A$ be a monomorphism. Let $\mathcal F\subseteq C$, $\mathcal P\subseteq\underline{K}(C)$, $\ep>0$, $\gamma>0$, and $\Gamma: \mathbb{Z}^k\to U_0(M_m(A))/CU(M_m(A))$ be a homomorphism.

Denote by $q=(\phi\otimes 1_{M_d})(Q)$, $e=\phi\otimes 1_{M_d}(VV^*)\in qM_d(A)q$ and $v=\phi\otimes 1_{M_d}(V)\in qM_d(A)q$.

Choose unitaries $v_1, ..., v_k\in U_0(M_m(A))$ such that $$\Gamma(x_i)=\overline{v_i}\in U_0(M_m(A))/CU(M_m(A)),\quad i=1, ..., k.$$ Then the elements $(v\otimes 1_m)v_i(v\otimes 1_m)^*$, $i=1, ..., k$, are unitaries in $M_m(eM_d(A)e)=(e\otimes 1_m)(M_m(qM_d(A)q))(e\otimes 1_m)$.

Choose $\ep_1>0$ and a finite subset $\mathcal G_1\subseteq QM_d(C)Q$  such that $VV^*\in \mathcal G_1$ and if there is a unitary $u\in qM_d(A)q$ with
$$\|[(\phi\otimes 1_{M_d})|_{QM_d(C)Q}(c), u]\|<\ep_1,\quad\forall c\in\mathcal G_1,$$ then
$$\|[(\phi\otimes 1_{M_d})|_{VCV^*}(VcV^*), u]\|<\ep,\quad\forall c\in\mathcal G.$$
By choosing $\ep_1$ sufficiently small (note that $VV^*\in\mathcal G_1$), the element $v^*uv$ can be assumed to be invertible in $A$ and
\begin{equation}\label{eq-c-1}
\|[\phi(c), \langle v^*uv \rangle]\|<\ep,\quad\forall c\in\mathcal G.
\end{equation}

Using the same argument as that of Corollary \ref{hp-mat}, one may choose a finite subset $\mathcal G_2\subseteq QM_d(C)Q$ and $\ep_2>0$ such that if
$$\|[(\phi\otimes 1_{M_d})|_{QM_d(C)Q}(c), u]\|<\ep_2,\quad\forall c\in\mathcal G_2,$$
and
$$\mathrm{Bott}(\phi\otimes 1_{M_d}|_{QM_d(C)Q}, u)|_{V\mathcal PV^*}=0,$$
then
$$\mathrm{Bott}(\phi\otimes 1_{M_d}|_{VCV^*}, \langle{eue}\rangle)|_{V\mathcal PV^*}=0.$$
Then one may assume further that $\ep_2$ is sufficiently small so that
$\|v^*\langle eue \rangle v-\langle v^*u v \rangle\|$
is small enough so that
\begin{equation}\label{eq-c-2}
\mathrm{Bott}(\phi, \langle{v^*uv}\rangle)|_{\mathcal P}=0.
\end{equation}

Denote by ${\tilde{V}}=V\otimes 1_{m}$ and ${\tilde{v}}=v\otimes 1_{m}$. Note that ${\tilde{V}} p_i {\tilde{V}}^*, {\tilde{V}}q_i{\tilde{V}}^* \in M_m(QM_d(C)Q)$.

Define $$\Gamma': \mathbb{Z}^k\to U_0(M_m(qM_d(A)q))/CU(M_m(qM_d(A)q))$$ by
$$\Gamma'(x_i) = \overline{{\tilde{v}} v_i {\tilde{v}}^* + (q\otimes 1_m-e\otimes 1_m)},\quad i=1, ..., k.$$

One may choose a finite subset $\mathcal G_3\subseteq QM_d(C)Q$ and $\ep_3>0$ such that if there is a unitary $u\in qM_d(A)q$ such that
$$\| [(\phi\otimes 1_{d})|_{QM_d(C)Q}(c), u]\|<\ep_3, \quad\forall c\in\mathcal G_3,$$
and if
\begin{eqnarray*}
&&\mathrm{dist}(
\overline{\langle(q\otimes 1_m-\phi({\tilde{V}}p_i{\tilde{V}}^*)+\phi({\tilde{V}}p_i{\tilde{V}}^*)u\otimes 1_m)(q\otimes 1_m-\phi({\tilde{V}}q_i{\tilde{V}}^*)+\phi({\tilde{V}}q_i{\tilde{V}}^*)u^*\otimes 1_m\rangle},
 \Gamma'(x_i))\\
 &<&\frac{\gamma}{(R+\frac{1}{8})}
 \end{eqnarray*}
 for any $1\leq i\leq k,$
then
\begin{equation}\label{gcu-01}
\mathrm{dist}(g_i, CU(M_m(qM_d(A)q))) <\frac{\gamma}{2(R+\frac{1}{8})},\quad i=1, ..., k,
 \end{equation}
where
$$g_i= \langle(q\otimes 1_m-\phi({\tilde{V}}p_i{\tilde{V}}^*)+\phi({\tilde{V}}p_i{\tilde{V}}^*)u\otimes 1_m)(q\otimes 1_m-\phi({\tilde{V}}q_i{\tilde{V}}^*)+\phi({\tilde{V}}q_i{\tilde{V}}^*)u^*\otimes 1_m\rangle  ({\tilde{v}}v^*_i{\tilde{v}}^*+(q\otimes 1_m-e\otimes 1_m)).$$
One may assume that $\ep_3$ is sufficiently small so that
\begin{equation}\label{gcu-02}
\|g_i - (g_i'+(q\otimes 1_m - e\otimes 1_m) )\| < \frac{\gamma}{2(R+\frac{1}{8})},\quad i=1, ..., k,
\end{equation}
where
$$g_i'=\langle(e\otimes 1_m-\phi({\tilde{V}}p_i{\tilde{V}}^*)+\phi({\tilde{V}}p_i{\tilde{V}}^*)\langle{eue}\rangle\otimes 1_m)(e\otimes 1_m-\phi({\tilde{V}}q_i{\tilde{V}}^*)+\phi({\tilde{V}}q_i{\tilde{V}}^*)\langle{eue}\rangle^*\otimes 1_m\rangle({\tilde{v}}v^*_i{\tilde{v}}^*).$$

{As in the proof of \ref{hp-mat}, since $A$ is a unital simple \CA\, with $TR(A)\le 1,$ one has that $g'\in U_0(M_m(eM_d(A)e)).$}
Note that for any $\tau\in T(M_m(qM_d(A)q))$, one has
$$\tau(e\otimes 1_m)\geq \frac{1}{R},$$
and therefore $R[e\otimes 1_m]\geq [q\otimes1_m-e\otimes 1_m]$. By Lemma 3.3 of \cite{Lin-hmtp}, one has that
$$\mathrm{dist}(g_i', CU(M_m(eM_d(A)e)))<(R+\frac{1}{8})\frac{\gamma}{(R+\frac{1}{8})}=\gamma,\quad i=1, ..., k.$$
Then one may also assume further that $\ep_3$ is sufficiently small so that
\begin{equation}\label{eq-c-3-0}
\mathrm{dist}(
\langle(1_m-\phi(p_i)+\phi(p_i)\langle{v^*uv}\rangle\otimes 1_m)(1_m-\phi(q_i)+\phi(q_i)\langle{v^*uv}\rangle^*\otimes 1_m\rangle v^*_i,
 CU({M_m(A)})<\gamma,
 \end{equation}
for any $1\leq i\leq k.$ That is,
\begin{equation}\label{eq-c-3}
\mathrm{dist}(
\overline{\langle(1_m-\phi(p_i)+\phi(p_i)\langle{v^*uv}\rangle\otimes 1_m)(1_m-\phi(q_i)+\phi(q_i)\langle{v^*uv}\rangle^*\otimes 1_m\rangle},
 \Gamma(x_i))<\gamma,
 \end{equation}
for any $1\leq i\leq k,$

Now, since $Q(M_d(PM_n(C(X))P))Q\cong M_r(C(X))$, applying the corollary to $M_r(C(X))$,  one obtains a unitary $u\in qM_d(A)q$ such that
$$\|[(\phi\otimes 1_{M_d})|_{QM_d(C)Q}(c), u]\|<\min\{\ep_1, \ep_2, \ep_3\},\quad\forall c\in \mathcal G_1\cup\mathcal G_2\cup\mathcal G_3,$$
$$\mathrm{Bott}((\phi\otimes 1_{M_d})|_{QM_d(C)Q}, u)|_{V\mathcal PV^*}=0,$$
and
\begin{eqnarray*}
&&\mathrm{dist}(
\overline{\langle(q\otimes 1_m-\phi({\tilde{V}}p_i{\tilde{V}}^*)+\phi({\tilde{V}}p_i{\tilde{V}}^*)u\otimes 1_m)(q\otimes 1_m-\phi({\tilde{V}}q_i{\tilde{V}}^*)+\phi({\tilde{V}}q_i{\tilde{V}}^*)u^*\otimes 1_m\rangle},
 \Gamma'(x_i))\\
 &<&\frac{\gamma}{(R+\frac{1}{8})}
 \end{eqnarray*}
 for any $1\leq i\leq k,$

By \eqref{eq-c-1}, \eqref{eq-c-2}, and \eqref{eq-c-3}, the unitary $w=\langle v^*uv \rangle \in A$ satisfies the corollary.
\end{proof}

\begin{lem}\label{fix-alg-k1}
Let $C=\mathrm{C}(X)$ with $X$ a compact metric space, and let $A$ be a simple C*-algebra with $TR(A)\leq 1$.  Then, for any $\ep>0$, any finite subset $\F\subseteq C$, any finite subset $\mathcal P\subseteq\underline{K}(C)$, and any finite subset $\mathcal U\subseteq
{U_c(C)\cap U(M_m(C))}$ (for some $m\ge 1$), there are finite subset $\mathcal G\subseteq C$ and $\delta>0$ such that if $h: C\to A$ is a unital homomorphism, and  $\phi: C\to pAp$ is a  unital $\mathcal G$-$\delta$-multiplicative map satisfying
$$
[h(u)]= {[\phi(u)]} \,\,\,{\rm in}\,\,\, K_1(A) \quad\forall u\in \mathcal U.
$$
%
there exists a $\F$-$\ep$-multiplicative map $\Phi: C\to (1-p)A(1-p)$ such that
$$
[\Phi]|_{\mathcal P}=[H]|_{\mathcal P},
$$
where $H: C\to (1-p)A(1-p)$ is the direct sum of finitely many point-evaluations and
$${\mathrm{dist}((h\otimes {\rm id}_{M_m})^\ddag(\overline{u})^{-1}\overline{\langle ((\Phi\otimes \oplus\phi)\otimes {\rm id}_{M_m})(u)\rangle }, \overline{1_m})<\ep,\quad\forall u\in \mathcal U}.$$
\end{lem}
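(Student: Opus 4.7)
The plan is to construct $\Phi$ as a sum of point-evaluations $H$, adjusted by a Bott-type perturbation whose $U/CU$ invariant absorbs the discrepancy between $h$ and $H\oplus\phi$ on $\mathcal U$. The key observation is that, since any finite sum of point-evaluations $H$ kills $K_1$, the hypothesis $[h(u)]=[\phi(u)]$ in $K_1(A)$ for $u\in\mathcal U$ forces
$$
d_u:=(h\otimes\mathrm{id}_{M_m})^{\ddag}(\overline{u})^{-1}\cdot\overline{\langle((H\oplus\phi)\otimes\mathrm{id}_{M_m})(u)\rangle}
$$
to lie in the kernel of the surjection $U_{\infty}(A)/CU_{\infty}(A)\twoheadrightarrow K_1(A)$, i.e., in the divisible subgroup $\Aff(T(A))/\overline{\rho_A(K_0(A))}$ (cf.\ the split exact sequence \eqref{Texact-2}). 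This is the invariant we must adjust.

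First, reduce to the case where $X$ is a finite CW complex by the standard inverse-limit argument used in Corollary \ref{hp-mat} and Theorem \ref{BB-exi}. Then select finitely many points $x_1,\ldots,x_N\in X$ and mutually orthogonal projections $E_1,\ldots,E_N\in(1-p)A(1-p)$ summing to $1-p$, and set $H(f)=\sum_i f(x_i)E_i$. Using $TR(A)\le 1$ and the abundance of projections in $(1-p)A(1-p)$, the weights $[E_i]\in K_0(A)$ can be chosen so that $[H]|_{\mathcal P}$ has the form required by the conclusion. Next, compute $d_u\in\Aff(T(A))/\overline{\rho_A(K_0(A))}$ for each $u\in\mathcal U$, and apply Corollary \ref{BB-exi-mat} to $H$ (after, if needed, absorbing $H$ into a genuine monomorphism $H\oplus H'$ with $H'$ an injective summand of controlled size, to meet the monomorphism hypothesis) with target homomorphism $\Gamma$ sending each generator corresponding to $u\in\mathcal U$ to $d_u^{-1}$. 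This yields a unitary $w\in M_r((1-p)A(1-p))$ almost commuting with $H(\mathcal F)$, with $\mathrm{Bott}(H,w)|_{\mathcal P}=0$ and the prescribed $U/CU$ values on the associated Bott unitaries.

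Finally, assemble $\Phi$ from $H$ and $w$ via the interval-algebra construction used at the end of the proof of Theorem \ref{BB-exi}. Since $TR((1-p)A(1-p))\le 1$, there is an interval subalgebra $I\cong\bigoplus_k M_{n_k}(C[0,1])\subseteq(1-p)A(1-p)$ absorbing enough of the trace space; Lemma 9.6 of \cite{Lin-LAH} then furnishes a $\mathcal G'$-$\delta'$-multiplicative completely positive map into $I$ (playing the role of the map $\Phi$ in that proof) whose K-theoretic class on $\mathcal P$ agrees with $[H]|_{\mathcal P}$ and whose de la Harpe--Skandalis determinants realize the shifts $d_u^{-1}$ on $\mathcal U$. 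The resulting $\mathcal F$-$\epsilon$-multiplicative map $\Phi:C\to(1-p)A(1-p)$ satisfies $[\Phi]|_{\mathcal P}=[H]|_{\mathcal P}$ (because the Bott obstruction vanishes on $\mathcal P$) and $\overline{(\Phi\oplus\phi)(u)}\approx_{\epsilon}\overline{h(u)}$ in $U(M_m(A))/CU(M_m(A))$ for every $u\in\mathcal U$, as required.

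The main obstacle is this final assembly step: simultaneously achieving the prescribed $K$-theory class on $\mathcal P$, the prescribed $U/CU$ shifts on $\mathcal U$, and approximate multiplicativity over $\mathcal F$. The interval-algebra machinery handles all three, but tracking the Bott data carefully (so that $[\Phi]|_{\mathcal P}=[H]|_{\mathcal P}$ exactly, not merely modulo torsion) is delicate; this is precisely what Lemma 9.6 of \cite{Lin-LAH} is designed to accomplish.
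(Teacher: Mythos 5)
Your opening observation is correct and important: since the point-evaluation $H$ kills $K_1$, the hypothesis $[h(u)]=[\phi(u)]$ does place the discrepancy $d_u$ in the divisible kernel $\Aff(T(A))/\overline{\rho_A(K_0(A))}$ of the surjection $U_\infty(A)/CU_\infty(A)\twoheadrightarrow K_1(A)$. Your final reliance on Lemma 9.6 of \cite{Lin-LAH} is also the right ingredient—this is exactly what the paper uses. However, the middle of your argument contains a genuine gap.

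First, the appeal to Corollary \ref{BB-exi-mat} is a category error. That result produces a unitary $w$ almost commuting with a monomorphism, with prescribed $U/CU$ values on the \emph{Bott unitaries} $\langle(1-\phi(p_i)+\phi(p_i)\tilde w)(1-\phi(q_i)+\phi(q_i)\tilde w^*)\rangle$ attached to elements $x_i=[p_i]-[q_i]\in K_0(C)$. Your targets $d_u$ are indexed by $u\in\mathcal U\subset K_1(C)$, and what you actually need to adjust is $\Phi^\ddag(u)$. A unitary $w$ commuting approximately with $H$ does not shift $H^\ddag$: conjugation leaves $U/CU$ classes unchanged, and the Bott pairing attached to $w$ is a separate $K_0$-indexed invariant. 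There is no homomorphism $\Gamma$ "sending each generator corresponding to $u\in\mathcal U$ to $d_u^{-1}$" because $\Gamma$ in \ref{BB-exi-mat} is defined on a free subgroup of $K_0(C)$, not $K_1(C)$. This step cannot be repaired as stated and is in any case redundant with your final step.

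Second, and more seriously, you omit the tracial decomposition of the \emph{pair} $(h,\phi)$. The paper first uses $[h(u)]=[\phi(u)]$ and stable rank one to extract a uniform exponential-length bound $T$ on $h(u)\bigl\langle(1-p)\oplus\phi(u^*)\bigr\rangle$, and then, using $TR(A)\le 1$, simultaneously approximates $h$ and $\phi$ by $h_0\oplus h_1$ and $\phi_0\oplus\phi_1$, with the $h_0,\phi_0$ part lying in a corner $(1-q)A(1-q)$ of small trace. The exponential-length bound ensures that the $U/CU$ discrepancy in the small corner is already within $\ep/4$ (this is exactly item (3) in the paper's list), so that only the interval-algebra corner $p_1Ip_1$ needs to carry the $\Phi^\ddag$ correction via Lemma 9.6; moreover the rank bound on $p_1$ (item (4)) is what licenses the application of Lemma 9.6. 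Without this decomposition you cannot simply "realize $d_u^{-1}$" inside the interval algebra: a map into $p_1Ip_1\subset(1-p)A(1-p)$ contributes to $\Aff(T(A))/\overline{\rho_A(K_0(A))}$ only through the corner, and there is no a priori reason the full discrepancy $d_u$ (computed from $H$ and $h$ on all of $(1-p)A(1-p)$) lies in the reachable range. The paper's construction carefully localizes the discrepancy so that the interval corner can absorb it; your sketch elides this, and that is the real content of the lemma.
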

\begin{proof}
Since $C$ can be written as an inductive limit of the C*-algebras of continuous functions on finite CW-complexes, without loss of generality, one may assume that $X$ is a finite CW-complex.

{Therefore we may also assume that $K_*(C)$ is  finitely generated.  Without loss of generality, we may further  assume
that ${\cal U}\in U(M_m(C))$ for some $m\ge 1$ which generates $K_1(C).$ }
 Fix a decomposition $K_1(C)=\Z^r\oplus\mathrm{Tor}(K_1(X))$.

 {{\it In what follows, to simplify notation, for maps  such as $h, $ $\Phi$ and $\phi,$ we will  use $h,\Phi, $ and $\phi$
 in stead of $h\otimes {\rm id}_{M_m},$ $\Phi\otimes {\rm id}_{M_m}$ and $\phi\otimes {\rm id}_{M_m}.$}}

 Choose a subset $\mathcal G\subseteq C$ and $\delta>0$ such that if $\psi': C\to D$ is a $\mathcal G$-$\delta$-multiplicative map for a C*-algebra $D$, then element $\overline{\langle\phi'(u^*)\rangle}$ is well defined for any $u\in\mathcal U$.

Denote by $N_1$ and $N_2$ the constants of Lemma 9.6 of \cite{Lin-LAH} with respect to $X$, $\F$ (in the place of $\mathcal G$), $\ep$ (in the place of $\delta$).


Let $h: C\to A$ be a homomorphism and let $\phi: C\to A$ be a $\mathcal G$-$\delta$-multiplicative map satisfying the lemma.

Since
${[h(u)]=[\phi(u)]}$ in $K_1(A)$ for all  $ u\in \mathcal U$ and $A$ has stable rank one, there is $T>0$ such that
$$\mathrm{cel}{(h(u)(\langle (1-p)\otimes 1_m\oplus\phi(u^*)\rangle ))}<T,\quad\forall u\in\mathcal U.$$

Since $TR(A)\leq 1$, there is a interval algebra $I\subseteq A$ with $q=1_I$ and $\mathcal G$-$\delta$-multiplicative maps $h_0, \phi_0: C\to (1-q)A(1-q)$, $h_1, \phi_1: C\to I$ such that
\begin{enumerate}
\item $\|h(u)-(h_0(u)\oplus h_1(u))\|<\ep/4$ and $\|\phi(u)-(\phi_0(u)\oplus \phi_1(u))\|<\ep/4$ for any $u\in\mathcal{U}$.
\item The element $p_0:=h_0(1_C)-\phi_0(1_C)$ is a projection in $(1-q)A(1-q)$, and the element  $p_1:=h_1(1_C)-\phi_1(1_C)$ is a projection in $I$; moreover,  $p_0+p_1=1-p$.

\item $\mathrm{dist}(\overline{\langle h_0(u)\oplus q\otimes 1_m\rangle \langle (\phi_0(u^*)\oplus (p_0\oplus q)\otimes 1_m) \rangle }, \overline{1_m})<\ep/4$,
that is,
$$\mathrm{dist}(\overline{\langle h_0(u)\oplus q\otimes 1_m\rangle}, \overline{\langle (\phi_0(u)\oplus (p_0\oplus q)\otimes 1_m) \rangle })<\ep/4.$$

\item The rank of $p_1$ is at least $N_1(\mathrm{dim}(X)+1)$.
\end{enumerate}

Then it follows from Lemma 9.6 of \cite{Lin-LAH} that there is a $\F$-$\ep$-multiplicative map $\Phi': C\to p_1Ip_1$ such that
$$
[\Phi']|_{\mathcal P}=[H']|_{\mathcal P},
$$
where $H': C\to p_1Ip_1$ is the direct sum of finitely many point evaluation maps and $$(\Phi')^\ddag(u)=h_1^\ddag(u) \overline{\langle \phi_1(u^*) \rangle},\quad\forall u\in\mathcal U.$$

Let $\Phi_0: C\to (1-q)A(1-q)$ be the map $f\to f(\xi)(1-p_0)$ for some $\xi\in X$, and define
$$\Phi=\Phi_0\oplus\Phi':\ C\to (p_0+p_1)A(p_0+p_1)=(1-p)A(1-p).$$ It is clear that $$[\Phi]|_{\mathcal P}=[H]|_{\mathcal P},$$ where $H: C\to (1-p)A(1-p)$ is a direct sum of finitely many point-evaluations. Furthermore, for any $u\in\mathcal U$, one has
\begin{eqnarray*}
\mathrm{dist}(\overline{\langle \Phi(u) \rangle \oplus\phi(u)}, \overline{h(u)})&<&\mathrm{dist}(\overline{\langle ((\Phi_0(u)\oplus\phi_0(u))\oplus (\Phi'(u)\oplus\phi_1(u))\rangle}, \overline{\langle h_0(u)\oplus h_1(u) \rangle})+\ep/4\\
&\leq&\mathrm{dist}(\overline{\langle \phi_0(u)\oplus (p_0\oplus  q)\otimes 1_m \rangle}, \overline{\langle h_0(u)\oplus q\otimes 1_m \rangle})+\\
&&\mathrm{dist}(\overline{\langle (1-q)\otimes 1_m\oplus\Phi'(u)\oplus\phi_1(u) \rangle}, \overline{\langle (1-q)\otimes 1_m\oplus h_1(u) \rangle})+\ep/4\\
&\leq&\mathrm{dist}(\overline{\langle \Phi'(u)\oplus\phi_1(u) \rangle}, \overline{ \langle h_1(u)\rangle})+3\ep/4 \\
&=&3\ep/4<\ep.
\end{eqnarray*}
This proves the lemma.
\end{proof}

\begin{thm}\label{B1B2-alg}
Let $C$ be an AH-algebra, and let $A$ be a simple C*-algebra with $TR(A)\leq 1$.  Suppose that $h: C\to A$ is a monomorphism. Then, for any $\ep>0$, any finite subset $\F\subseteq C$ and any finite subset $\mathcal{P}\subseteq\underline{K}(C)$, there is a C*-algebra $C'\cong PM_n(\mathrm{C}(X'))P$ for some finite CW-complex $X'$ with
$K_1(C')=\Z^k\oplus {\mathrm{Tor}}(K_1(C'))$ and a homomorphism $\iota: C'\to C$ with  $\mathcal P\subseteq[\iota](\underline{K}(C'))$, a finite subset $\mathcal{Q}\subseteq \Z^k \subset K_1(C')$ and $\delta>0$ satisfying the following: Suppose that $\kappa\in\mathrm{Hom}_\Lambda(\underline{K}(C'\otimes {\mathrm{C}}(\T)), \underline{K}(A))$ with
$$|\rho_A\circ\kappa(\boldsymbol{\beta}(x))(\tau)|<\delta,\quad\forall x\in\mathcal{Q},\ \forall \tau\in\mathrm{T}(A).$$
Then there exists a unitary $u\in A$ such that
$$\|[h(c), u]\|<\ep,\quad\forall c\in\F\quad\textrm{and}\quad \mathrm{Bott}(h\circ\iota, u)=\kappa\circ\boldsymbol{\beta}.$$

Moreover, there is a sequence of C*-algebras $C_n$ with the form $C_n=P_nM_{r(n)}(C(X_n))P_n$, where each $X_n$ is a finite CW-complex and $P_n\in M_{r(n)}(C(X_n))$ a projection, such that $C=\varinjlim(C_n, \phi_n)$ for a sequence of unital homomorphisms $\phi_n: C_n\to C_{n+1}$ and one may choose $C'=C_n$ and $\iota=\phi_n$ for some integer $n\geq 1$.
\end{thm}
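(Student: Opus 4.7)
The plan is to reduce from a general AH-algebra $C$ to a single building block $C'=PM_r(C(X'))P$ with $X'$ a finite CW-complex, and then build $u$ by first realizing $\kappa$ at the $KL$-level and then applying two finer corrections supplied by Corollary \ref{BB-exi-mat} and Lemma \ref{fix-alg-k1}.

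\textbf{Reduction to a building block.} Since $C$ is AH, write $C=\varinjlim(C_n,\phi_n)$ with $C_n=P_nM_{r(n)}(C(X_n))P_n$ and each $X_n$ a finite CW-complex. Because $\mathcal F$ and $\mathcal P$ are finite, pick $n$ large enough that every $f\in\mathcal F$ is $\ep/8$-close to $\phi_{n,\infty}(C_n)$ and $\mathcal P\subseteq[\phi_{n,\infty}](\underline K(C_n))$; after further telescoping one may also arrange $K_1(C_n)=\Z^k\oplus\mathrm{Tor}(K_1(C_n))$. Set $C'=C_n$, $\iota=\phi_{n,\infty}$, and take $\mathcal Q\subseteq\Z^k$ to be a finite subset whose image in $K_1(C)$ generates the free part of the $K_1$-component of $[\iota]^{-1}(\mathcal P)$. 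This produces the required $C'$, $\iota$, and $\mathcal Q$.

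\textbf{$KL$-level construction.} Applying a standard $\mathrm{Hom}_\Lambda$-existence result (in the spirit of Lemma 9.6 of \cite{Lin-LAH}, which was already used in the proof of \ref{BB-exi}) yields, for sufficiently small $\delta$, a $\mathcal G'\otimes\{1,z\}$-$\delta'$-multiplicative completely positive linear map $L:C'\otimes C(\T)\to A$ with $[L]=\kappa$ in $KL$, with $L(c\otimes 1)\approx_{\ep/8}h(\iota(c))$ on a chosen generating set $\mathcal G'$, and with $L^{\ddag}$ agreeing with the value prescribed by $\kappa$ on lifts of the generators of $s_1(K_1(C'\otimes C(\T)))$. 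Set $u_0=\langle L(1\otimes z)\rangle$. Then $\|[h(\iota(c)),u_0]\|<\ep/4$ for $c\in\mathcal F$ and $\mathrm{Bott}(h\circ\iota,u_0)$ matches $\kappa\circ\boldsymbol{\beta}$ at the $KL$-level on all of $\underline K(C')$.

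\textbf{Refining to $\mathrm{Hom}_\Lambda$ via two corrections.} The remaining discrepancy lives beyond $KL$, namely in the trace-side summand $\Aff(T(A))/\overline{\rho_A(K_0(A))}$ of $U_\infty(A)/CU_\infty(A)$. On the generators $x_i=[p_i]-[q_i]$ of the free part of $K_0(C')$, the difference between the $\overline{\langle\cdot\rangle}$-value produced by $u_0$ and the target $\kappa\circ\boldsymbol\beta(x_i)$ defines a homomorphism $\Gamma:\Z^{k'}\to U_0(M_m(A))/CU(M_m(A))$. Corollary \ref{BB-exi-mat} produces a unitary $w$ realizing $\Gamma$ with $\mathrm{Bott}(h\circ\iota,w)|_{\mathcal P}=0$; replace $u_0$ by $u_1=u_0w$. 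The residual component of $\kappa\circ\boldsymbol\beta$ lies in $\mathrm{Hom}(K_1(C'),K_0(A))$, and the hypothesis $|\rho_A\circ\kappa(\boldsymbol\beta(x))(\tau)|<\delta$ for $x\in\mathcal Q$ controls its trace-pairing enough that Lemma \ref{fix-alg-k1} can absorb it, yielding the final $u$ with $\mathrm{Bott}(h\circ\iota,u)=\kappa\circ\boldsymbol{\beta}$ and $\|[h(c),u]\|<\ep$ for $c\in\mathcal F$.

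\textbf{Main obstacle.} The delicate point is the order of quantifiers: $\delta$, $\mathcal G$, $\mathcal Q$ and the data $\{p_i,q_i\}$ must be fixed before $\kappa$ is known, so all three steps must be uniformly controllable by a single tolerance that is preserved as one passes from the $KL$-approximation to the full $\mathrm{Hom}_\Lambda$-invariant. One must also verify that the two correction steps do not destroy the $KL$-agreement obtained in Step 2; this is ensured precisely by $\mathrm{Bott}(h\circ\iota,w)|_{\mathcal P}=0$ in \ref{BB-exi-mat} and by the controlled multiplicative perturbation of \ref{fix-alg-k1}. The torsion and Bockstein components of $\kappa\circ\boldsymbol{\beta}$ are captured by working at the $\mathrm{Hom}_\Lambda$-level from the outset in Step 2.
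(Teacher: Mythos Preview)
Your Step 2 is the heart of the matter, and it is not justified. Lemma 9.6 of \cite{Lin-LAH} produces $\mathcal G$-$\delta$-multiplicative maps into \emph{interval algebras} $\bigoplus_i M_{m_i}(C[0,1])$, not into $A$ with the restriction $L|_{C'\otimes 1}$ landing $\ep/8$-close to $h\circ\iota$. There is no off-the-shelf existence result that hands you $L:C'\otimes C(\T)\to A$ with $[L]=\kappa$ \emph{and} $L(c\otimes 1)\approx h(\iota(c))$; producing such a map is precisely the content of the theorem. The paper's proof works by realizing $\kappa\circ\boldsymbol\beta$ only inside a small corner $p_0Bp_0$ of a tracial-rank-zero subalgebra $B\subset A$ (via Lemma 6.1 of \cite{Lnclasn}), then reconstructing on the complement a map $h_2$ whose invariants $([h_2]|_{\mathcal P'},\, \tau\circ h_2,\, h_2^{\ddag})$ match those of $h$ well enough to invoke the uniqueness Theorem 5.3 of \cite{Lin-AU11} and conjugate back to $h$. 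Lemma \ref{fix-alg-k1} enters only to adjust the $\ddag$-invariant of $h_2$ so the uniqueness theorem applies; Corollary \ref{BB-exi-mat} is not used here at all.

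Your Steps 3--4 also reflect a misunderstanding of what is being matched. The conclusion $\mathrm{Bott}(h\circ\iota,u)=\kappa\circ\boldsymbol\beta$ is an equality in $\mathrm{Hom}_\Lambda(\underline K(C'),\underline K(A))$; there is no ``beyond-$KL$'' residue in $\Aff(T(A))/\overline{\rho_A(K_0(A))}$ or in $U_\infty(A)/CU_\infty(A)$ to correct. If your Step 2 really achieved $\mathrm{Bott}(h\circ\iota,u_0)=\kappa\circ\boldsymbol\beta$ on all of $\underline K(C')$, you would already be done. The trace hypothesis $|\rho_A\circ\kappa(\boldsymbol\beta(x))(\tau)|<\delta$ is not there to be ``absorbed'' by \ref{fix-alg-k1}; it is the input needed by Lemma 6.1 of \cite{Lnclasn} to realize $\kappa\circ\boldsymbol\beta$ inside the small corner $p_0Bp_0$ of the tracial-rank-zero subalgebra.
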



\begin{proof}
The proof is similar to that of Theorem 6.3 of \cite{Lnclasn}. Without loss of generality, one may assume that $C=\mathrm{C}(X)$ for some compact metric space $X$.  Denote by
$$\Delta(a)=\inf\{\mu_{\tau\circ h}(O_a);\ \tau\in\mathrm{T}(A), \textrm{$O_a$ is an open ball of $X$ with radius $a$}\}.
$$ Since $A$ is simple and $\mathrm{T}(A)$ is compact, $\Delta(a)$ is a nondecreasing function from $(0, 1)$ to $(0, 1)$.

Let $B$ be a unital separable simple amenable C*-algebra with $TR(B)=0$ satisfying the UCT and
$$(K_0(B), K_0^+(B), [1_B], K_1(B)) \cong (K_0(A), K_0^+(A), [1_A], K_1(A)).$$

Then there is an embedding $\iota': B\to A$ such that $[\iota']$ induces an identification of the above. In the following, we identify $B$ as a C*-subalgebra of $A$.

Let $\ep_1>0$ with $\ep_1<\ep$, and let $\F_1\supseteq\F$ be a finite subset such that for any unital homomorphism $H: C\to A$ and unitary $u'\in A$ satisfying
$$\|[H(c), u']\|<\ep_1,\quad\forall c\in\F_1,$$
the map $\mathrm{Bott}(H, u')|_{\mathcal P}$ is well defined; moreover, if $H': C\to A$ is any other unital monomorphism satisfying $$\|H(c)- H'(c)\|<\ep_1,\quad\forall c\in\F_1,$$
then
$$\mathrm{Bott}(H, u')|_{\mathcal P}=\mathrm{Bott}(H', u')|_{\mathcal P}.$$

Let $\eta$, $\delta_1$ (in the place of $\delta$), $\mathcal G_1\subseteq C$ (in the place of $\mathcal H$), $\mathcal P'\subseteq\underline{K}(C)$ (in the place of $\mathcal P$), and $\mathcal U\subseteq U_c(C)$, $\gamma_1$, $\gamma_2$ be the constants and finite subsets of Theorem 5.3 of \cite{Lin-AU11} with respect to $\ep_1/2$, $\F_1$, and $\Delta(\cdot/3)/2$.

Let $\delta_2$ and $\mathcal G_2'\subseteq C$ be the constant and finite subset required by Lemma 3.4 of \cite{Lin-AU11} with respect to $\Delta$, $\eta$, and $\lambda_1=\lambda_2=1/2$. Denote by $\mathcal G_2=\mathcal G_1\cup\mathcal G_2'$. Without loss of generality, one may assume that $\delta_2<\gamma_1$.

{Let ${\cal G}_3$ (in place of ${\cal G}$) be a finite subset of $C$ and $\dt_3>0$ be as required by Lemma
\ref{fix-alg-k1} for ${\cal G}_1$
(in place of ${\cal F}$), $\dt_1$ (in place of $\ep$),  $\mathcal P'$ (in place of $\mathcal P$), and $\mathcal U$ . Without loss of generality, we may assume that ${\cal G}_3\subset {\cal G}_1$
and $\dt_3<\min\{\dt_1, \dt_2\}.$ }

By Lemma 6.2 of \cite{Lnclasn}, there is a $\mathcal G_3$-$\delta_3$-multiplicative map $h_0: C\to p_0Bp_0$ with $\tau(p_0)<\delta_2/4$ and a unital homomorphism $h_1': C\to F$, where $F$ is a finite dimensional C*-subalgebra of $B$ with $1_F=1-p_0$ such that
$$[h_0+h_1']|_{\mathcal P'}=[h]|_{\mathcal P'}\ \mathrm{in}\ KL(C, A).$$

Let $C'\subseteq  C$,  $1>\delta_4>0$ and $\mathcal Q'
\subseteq K_1(C')$ (in place of ${\cal Q}$)
be the constant and finite subset required by
Lemma 6.1 of \cite{Lnclasn} with respect to $\F$, $\mathcal P$, and $p_0Bp_0$.
We may write
$K_1(C')=\Z^k\oplus {\mathrm{Tor}}(K_1(C')).$
Let ${\cal Q}\subset \Z^k$ be a finite subset such that
$$
{\cal Q}'=\{x+y: x\in {\cal Q}\andeqn  y\in {\mathrm{Tor}}(K_1(C'))\}.
$$
Let $\delta=\min\{\delta_4\delta_1/16\pi, \delta_4\delta_2/4\}$. Now let $\kappa\in\mathrm{Hom}_\Lambda(\underline{K}(C'\otimes\mathrm{C}(\mathbb T)), \underline{K}(A))$ with
$$|\rho_A\circ\kappa(\boldsymbol{\beta}(x))(\tau)|<\delta \rforal  x\in\mathcal{Q}\andeqn  \rforal \tau\in\mathrm{T}(A).$$
Note that this implies
$$
|\rho_A\circ \kappa(\boldsymbol{\bt}(x))(\tau)|<\dt\tforal x\in {\cal Q}'
$$
and for all $\tau\in T(A).$
By Lemma 6.1 of \cite{Lnclasn}, there is a unitary $u_0\in p_0Bp_0$ such that
$$\|[h_0(c), u_0]\|<\ep_1/2,\quad \forall c\in \F,$$ and
$$\mathrm{Bott}(h_0\circ\iota, u_0)=\kappa\circ\boldsymbol{\beta}.$$

Put $u=u_0+(1-p_0)$. Then there is a nonzero projection $q_0\in(1-p_0)A(1-p_0)$ such that
$$q_0f=fq_0,\quad\forall f\in F,\quad\textrm{and}\quad \tau(q_0)<\delta,\quad\forall \tau\in T(A).$$

Define $\psi_0(c)=q_0h_1'(c)$ and $\psi'_0(c)=(1-p_0-q_0)h_1'$ for all $c\in C$. By Lemma 9.5 of \cite{LinTAI}, there is C*-subalgebra $B_0\in (1-p_0-q_0)A(1-p_0-q_0)$ with $B_0$ an interval algebra and a unital homomorphism $h_1: C\to B_0$ such that
$(h_1)_{*0}=(\psi_0')_{*0}$ and
$$|\tau(h_1(f))-\tau(1-p_0-q_0)\tau(h(f))|<\delta_2/4,\quad\forall f\in\mathcal G_2.$$

Define $\psi_1=h_0+h_1$. By Lemma \ref{fix-alg-k1}, there is $\mathcal G_1$-$\delta_1$-multiplicative map $\Phi: C\to q_0Aq_0$ with $\Phi_{*0}=(\psi_0)_{*0}$, $[\Phi]|_{\mathcal P'}=[H]|_{\mathcal P'}$ in $KL(C, A)$ for some point evaluation map $H: C\to pAp$, and
$$\mathrm{dist}(h^\ddag(\overline{u})^{-1}\overline{\langle (\Phi\oplus\psi_1)(u) \rangle}, \overline{{1_m}})<\gamma_2,\quad\forall u\in \mathcal U.$$

Define $h_2=\Phi\oplus\psi_1$. Then
$[h_2]|_{\mathcal P'}=[h]|_{\mathcal P'}$ in $KL(C, A)$
and for any $u\in \mathcal U$,
$$\mathrm{dist}(\overline{\langle h_2(u) \rangle}, \overline{h(u)})=\mathrm{dist}(\overline{\langle \Phi(u)\oplus\psi_1(u) \rangle}, \overline{h(u)})\approx_{\gamma_2} 0.$$

Moreover, for any $f\in \mathcal G_2$ and any $\tau\in T(A)$,
\begin{eqnarray*}
|\tau(h(f))-\tau(h_2(f))| & < & \delta_2/4 +|\tau(h(f))-\tau(h_1(f))|\\
&\leq & 3\delta_2/4 + |\tau(1-p_0-q_0)\tau(h(f))-\tau(h_1(f))|\\
&<&\delta_2 <\gamma_1.
\end{eqnarray*}

Note that $\mu_{\tau\circ h}(O_a)\geq \Delta(a)$ for any $a$, by Lemma 3.4 of \cite{Lin-AU11}, one has
$$\mu_{\tau\circ h_2}(O_a)\geq\frac{1}{2}\Delta(a/3)$$
for any $a\geq \eta$. Then, by Theorem 5.3 of \cite{Lin-AU11}, there is a unitary $U\in A$ such that
$$\mathrm{ad}U\circ h_2\approx_{\ep_1/2} h,\quad\textrm{on $\F_1$}.$$
Define $u= U^*(u_0+(1-p_0))U.$ Then
$$\|[h(c), u]\|<\ep_1,\quad\forall c\in\F_1.$$
Moreover, by the choice of $\ep_1$, one has
$$\mathrm{Bott}(h\circ\iota, u)=\mathrm{Bott}(h_2\circ\iota, u_0+(1-p_0))=\mathrm{Bott}(h_0\circ\iota, u_0)=\kappa\circ\boldsymbol{\beta},$$
as desired.
\end{proof}

\section{Asymptotic unitary equivalence}

\begin{lem}\label{asy-01-tr0}
Let $C$ be a unital AH-algebra and let $A$ be a unital separable simple C*-algebra with $TR(A) \leq 1$. Suppose that $\phi_1, \phi_2: C\to A$ are two unital monomorphisms. Suppose that
\begin{enumerate}
\item $[\phi_1]=[\phi_2]$ in $KL(C, A)$, $\phi_1^\ddag=\phi_2^\ddag$, $(\phi_1)_\sharp=(\phi_2)_\sharp$,
\item $R_{\phi_1, \phi_2}(K_1(M_{\phi_1, \phi_2}))\subseteq\rho_A(K_0(A))$.
\end{enumerate}
Then, for any increasing sequence of finite subsets $(\mathcal F_n)$ of $C$ whose union is dense in $C$, any increasing sequence of finite subsets $(\mathcal P_n)$ of $K_1(C)$ with $\bigcup_{n=1}^\infty\mathcal P_n=K_1(C)$ and any decreasing sequence of positive number $(\delta_n)$ with $\sum_{n=1}^\infty\delta_n<\infty$, there exists a sequence of unitaries $(u_n)$ in $U(A)$ such that
$$\mathrm{ad}(u_n)\circ\phi_1\approx_{\delta_n} \phi_2\quad\textrm{on $\F_n$},$$
and
$$\rho_A(\mathrm{bott}_1(\phi_2, u_n^*u_{n+1})(x))=0,$$ for all $x\in\mathcal P_n$ for all sufficiently large $n$.
\end{lem}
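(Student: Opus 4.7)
The starting point is the approximate unitary equivalence theorem for targets of tracial rank at most one recalled in the introduction (\cite{Lin-AU11}): the three conditions gathered in hypothesis (1)---equality of $[\phi_1]$ and $[\phi_2]$ in $KL(C,A)$, equality of the induced tracial affine maps via $(\phi_1)_\sharp=(\phi_2)_\sharp$, and $\phi_1^{\ddag}=\phi_2^{\ddag}$---are precisely what is needed to produce a sequence of unitaries $(u_n')\subseteq U(A)$ with $\mathrm{ad}(u_n')\circ\phi_1\to \phi_2$ in the point-norm topology. Passing to a subsequence, one may arrange $\mathrm{ad}(u_n')\circ\phi_1\approx_{\delta_n/2}\phi_2$ on $\mathcal{F}_n$. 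What remains is to perturb this sequence into one whose transitions $u_n^*u_{n+1}$ carry vanishing Bott image under $\rho_A$ on $\mathcal{P}_n$ for all large $n$.

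Next, I would identify the obstruction with the rotation map. For $n$ large, the unitary $v_n':=(u_n')^*u_{n+1}'$ $\delta$-commutes with $\phi_2$ on any prescribed finite subset, so $\kappa_n:=\mathrm{bott}_1(\phi_2,v_n')|_{\mathcal{P}_n}\in \mathrm{Hom}(\mathcal{P}_n,K_0(A))$ is well defined. Because $[\phi_1]=[\phi_2]$ in $KL(C,A)$, the sequence $\eta_1(M_{\phi_1,\phi_2})$ of Definition \ref{D-Maf} splits; fix a section $\theta: K_1(C)\to K_1(M_{\phi_1,\phi_2})$. Concatenating the local paths that realize the approximate unitary equivalence and tracking the de la Harpe--Skandalis determinant (using Lemma \ref{DrL} to relate rotation values on the $K_0(A)$ part to $\rho_A$) identifies $R_{\phi_1,\phi_2}\circ \theta$, evaluated on a fixed $x\in K_1(C)$, with the telescoped trace $\sum_{k\ge n}\rho_A(\kappa_k(x))$. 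Hypothesis (2) then says this value lies in $\rho_A(K_0(A))$, so I can choose a homomorphism $d:K_1(C)\to K_0(A)$ with $\rho_A\circ d=R_{\phi_1,\phi_2}\circ \theta$.

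Third, I would apply Theorem \ref{B1B2-alg} to produce, for each $n$, a unitary $w_n\in U(A)$ that $\delta_n$-commutes with $\phi_2(\mathcal{G}_n)$ and realizes a prescribed $\alpha_n\in\mathrm{Hom}(\mathcal{P}_n,K_0(A))$ as its Bott invariant. The $\alpha_n$ are chosen inductively so that $\rho_A(\alpha_{n+1}-\alpha_n)=-\rho_A(\kappa_n)$ on $\mathcal{P}_n$; this is possible thanks to the lifting $d$ above. Setting $u_n:=u_n'w_n$, additivity of $\mathrm{bott}_1$ under multiplication of approximately commuting unitaries yields $\rho_A\circ\mathrm{bott}_1(\phi_2,u_n^*u_{n+1})=0$ on $\mathcal{P}_n$, while taking $\delta_n$ small enough before fixing $w_n$ preserves the approximate intertwining $\mathrm{ad}(u_n)\circ \phi_1\approx_{\delta_n}\phi_2$ on $\mathcal{F}_n$.

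The main obstacle is the simultaneous bookkeeping at the level of finite approximations: at each stage one must coordinate $\mathcal{G}_n, \mathcal{P}_n$ and the tolerances so that (i) $\mathrm{bott}_1(\phi_2,\cdot)|_{\mathcal{P}_n}$ is jointly defined on $v_n',w_n,w_{n+1}$ and additive on their product, (ii) the identification of the telescoped $\rho_A\kappa_n$ with a restriction of $R_{\phi_1,\phi_2}\circ \theta$ is valid to the required precision, and (iii) the existence theorem produces unitaries realizing precisely the $K_0$-lifts dictated by $d$. The conceptual crux is that hypothesis (2) permits lifting the rotation through $\rho_A$, turning an a priori $\mathrm{Aff}(T(A))$-valued obstruction into a genuinely $K_0(A)$-valued one that can then be canceled by perturbations $w_n$ supplied by Theorem \ref{B1B2-alg}; without condition (2) one could only annihilate the class of $R_{\phi_1,\phi_2}$ in $\mathrm{Hom}(K_1(C),\mathrm{Aff}(T(A)))/\mathcal{R}_0$, which is strictly weaker than what the lemma asserts.
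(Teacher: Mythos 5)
Your proposal is correct and follows essentially the same route as the paper, whose own proof is a one-line citation: run Lemma 7.1 of \cite{Lnclasn} with Theorem~\ref{B1B2-alg} substituted for the existence result (Theorem 6.3 of \cite{Lnclasn}) used there. Your sketch reconstructs exactly what that modified argument must do---bootstrap a sequence $u_n'$ from the approximate unitary equivalence theorem of \cite{Lin-AU11}, telescope the resulting $\mathrm{bott}_1$ obstructions against $R_{\phi_1,\phi_2}\circ\theta$, invoke hypothesis~(2) to produce a $K_0(A)$-valued lift through $\rho_A$, and cancel stage by stage with correction unitaries supplied by Theorem~\ref{B1B2-alg}---so it is the paper's argument in expanded form.
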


\begin{proof}
The proof is a simple modification of the proof of Lemma 7.1 of \cite{Lnclasn}. In the place of Theorem 6.3 of \cite{Lnclasn} being used, one uses the second part of Theorem \ref{B1B2-alg} instead.
\end{proof}

\begin{thm}\label{MT1}
Let $C$ be a unital AH-algebra and let A be a unital separable simple C*-algebra with $TR(A)\leq1$. Suppose that $\phi_1, \phi_2: C\to A$ are two unital monomorphisms. Then there exists a continuous path of unitaries $\{u(t) : t\in [0, \infty)\} \subseteq A$ such that
$$\lim_{t\to\infty}\mathrm{ad}(U(t))\circ\phi_1(c)=\phi_2(c)\quad \textrm{for all $c\in C$} $$
if and only if
$$[\phi_1]=[\phi_2]\ \textrm{in}\ KK(C, A),\ (\phi_1)^{\ddag}=(\phi_2)^\ddag,\ (\phi_1)_\sharp=(\phi_2)_\sharp$$ and
$$\overline{R}_{\phi_1, \phi_2}=0.$$
\end{thm}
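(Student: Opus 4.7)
The plan for the necessity (``only if'') direction is the standard one: a continuous path of inner-automorphism conjugates preserves the $KK$-class, the induced map $\phi^\ddag$ on $U_\infty/CU_\infty$, and the map $\phi_\sharp$ on $\mathrm{Aff}(\mathrm T(A))$; for $\overline R_{\phi_1,\phi_2}$ one interprets the path itself as (part of) a unitary in $M_{\phi_1,\phi_2}$ and computes its de la Harpe--Skandalis determinant, whence Lemma \ref{DrL} forces $\overline R_{\phi_1,\phi_2}=0$.

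For sufficiency I would adapt the strategy of Theorem 7.2 of \cite{Lnclasn}, replacing the two main technical inputs---a homotopy lemma and a Bott-realization lemma---by the general-AH versions Corollaries \ref{hp-mat} and \ref{BB-exi-mat} proved above. First, write $C=\varinjlim(C_n,\phi_n)$ with $C_n=P_nM_{r(n)}(\mathrm C(X_n))P_n$ for finite CW-complexes $X_n$, and fix increasing finite subsets $\F_n\subseteq C$ with dense union, increasing finite subsets $\mathcal P_n\subseteq K_1(C)$ with $\bigcup_n\mathcal P_n=K_1(C)$, and a summable positive sequence $(\delta_n)$. Observe that $\overline R_{\phi_1,\phi_2}=0$, together with Lemma \ref{DrL} applied to the $K_0(A)$-summand in the split extension $\eta_1(M_{\phi_1,\phi_2})$, yields $R_{\phi_1,\phi_2}(K_1(M_{\phi_1,\phi_2}))\subseteq \rho_A(K_0(A))$; Lemma \ref{asy-01-tr0} then supplies unitaries $u_n\in U(A)$ with $\mathrm{ad}(u_n)\circ\phi_1\approx_{\delta_n}\phi_2$ on $\F_n$ and
\[
\rho_A\bigl(\mathrm{bott}_1(\phi_2,u_n^*u_{n+1})(x)\bigr)=0,\qquad x\in\mathcal P_n,
\]
for all sufficiently large $n$.

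The next step is to kill the residual full Bott obstruction. Since $u_n^*u_{n+1}$ almost commutes with $\phi_2(\F_n)$, the class $\kappa_n:=\mathrm{Bott}(\phi_2\circ\iota_n,u_n^*u_{n+1})$ is well defined on a prescribed finite subset of $\underline K(C_n)$, and its $\rho_A$-component vanishes by the previous display. Corollary \ref{BB-exi-mat} then produces a unitary $w_n\in A$ nearly commuting with $\phi_2(\F_n)$ whose $\mathrm{Bott}$ map on the relevant test set equals $-\kappa_n$ and whose value on the fixed free generators of the relevant subgroup of $K_0(C_n)$ is any preassigned homomorphism into $U_0(M_\bullet(A))/CU(M_\bullet(A))$. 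The hypotheses $\phi_1^\ddag=\phi_2^\ddag$ and $(\phi_1)_\sharp=(\phi_2)_\sharp$ are used here to choose that preassignment so that, after replacing $u_n$ by $v_n:=u_nW_n$ for a telescoping product $W_n$ of the $w_j$, the word $v_n^*v_{n+1}$ has vanishing $\mathrm{Bott}$ map on the chosen finite set and also satisfies the ``close to $CU$'' hypothesis required by Corollary \ref{hp-mat}; one still has $\mathrm{ad}(v_n)\circ\phi_1\approx_{\delta_n'}\phi_2$ on $\F_n$ for a (still summable) sequence $(\delta_n')$.

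Finally, since $\phi_2$ is a unital monomorphism into the simple unital \CA\ $A$, the pull-back measure $\mu_{\tau\circ\phi_2\circ\iota_n}$ has a uniform lower bound on balls of radius at least some $\eta_n>0$, so Corollary \ref{hp-mat} applies to each pair $(\phi_2\circ\iota_n, v_n^*v_{n+1})$ and yields a continuous path $\{z_n(t):t\in[0,1]\}\subseteq U(A)$ with $z_n(0)=1$, $z_n(1)=v_n^*v_{n+1}$, and $\|[\phi_2(c),z_n(t)]\|<\ep_n$ for $c\in\F_n$, where $\ep_n\to 0$. Setting $U(t)=v_nz_n(t-n)$ on $[n,n+1]$ glues to a continuous path of unitaries in $A$, and a standard three-$\ep$ estimate combining $\mathrm{ad}(v_n)\circ\phi_1\approx\phi_2$ on $\F_n$ with the near-commutation of $z_n(t-n)$ with $\phi_2(\F_n)$ gives $\lim_{t\to\infty}\mathrm{ad}(U(t))\circ\phi_1(c)=\phi_2(c)$ for every $c\in C$. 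The main difficulty will be the simultaneous bookkeeping of constants: the data $(\delta,\eta,\gamma,\mathcal G,\mathcal P,\mathcal Q)$ demanded by Corollary \ref{hp-mat}, the analogous data for Corollary \ref{BB-exi-mat}, and the exhaustion data $(\F_n,\mathcal P_n,\delta_n)$ must be interleaved in the correct order so that each $w_n$ is small enough to preserve approximate intertwining, large enough to cancel $\kappa_n$ exactly, and carries the correct $U/CU$ information to feed into the homotopy lemma. This careful simultaneous control is precisely what distinguishes the general unital AH case treated here from the slow-dimension-growth simple AH setting of Theorem 7.2 of \cite{Lnclasn}.
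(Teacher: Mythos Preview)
Your outline has a genuine gap in the ``kill the residual Bott obstruction'' step. Corollary \ref{BB-exi-mat} does \emph{not} produce a unitary with a \emph{prescribed} nonzero Bott map: it only yields a $w$ with $\mathrm{Bott}(\phi,w)|_{\mathcal P}=0$ together with a prescribed $U_0/CU$-value on a free set in $K_0$. The tool in this paper that realizes a prescribed $\kappa\in\mathrm{Hom}_\Lambda(\underline K(C'\otimes\mathrm C(\mathbb T)),\underline K(A))$ as a Bott element is Theorem \ref{B1B2-alg}, and it only applies when $|\rho_A\circ\kappa(\boldsymbol\beta(x))(\tau)|$ is small on a specified finite subset $\mathcal Q\subset K_1(C')$. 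The paper therefore does the two jobs \emph{separately}: first it uses Theorem \ref{B1B2-alg} to replace the $v_n$ by $u'_n=v_nw_n^*$ with $\mathrm{Bott}(\phi_2\circ\psi_{n+1,\infty},(u'_n)^*u'_{n+1})=0$ (equation \eqref{Bott-trivial}), and only afterwards invokes Corollary \ref{BB-exi-mat} to build a second correcting sequence $s_n$ (with $\mathrm{Bott}=0$) carrying the required $U_0/CU$ data, so that $u_n=u'_n s_n^*$ satisfies all three hypotheses \eqref{hp-cond-1}--\eqref{hp-cond-3} of Corollary \ref{hp-mat}. Your proposal collapses these two distinct steps into one application of \ref{BB-exi-mat}, which that corollary cannot deliver.

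There is a second, related omission. Lemma \ref{asy-01-tr0} only gives $\rho_A(\mathrm{bott}_1(\phi_2,u_n^*u_{n+1})(x))=0$ on $\mathcal P_n\subset K_1(C)$; it says nothing about the full $\underline K$-valued class, and in particular the quantity fed into Theorem \ref{B1B2-alg} is not $\mathrm{Bott}(\phi_2,u_n^*u_{n+1})$ but a certain difference $\tilde\kappa_n$ built from the mapping-torus machinery of \cite{Lin-Asy}. This is where the hypotheses $[\phi_1]=[\phi_2]$ in $KK(C,A)$ and $\overline R_{\phi_1,\phi_2}=0$ are genuinely used: they give a $\Lambda$-splitting $\theta:\underline K(C)\to\underline K(M_{\phi_1,\phi_2})$ with $R_{\phi_1,\phi_2}\circ\theta|_{K_1(C)}=0$, and the paper compares the ``experimental'' lifts $\gamma_n$ (coming from the $v_n$) with the ``model'' lifts $\theta_n=\theta\circ[\psi_{n+1,\infty}]$ via 10.3--10.6 of \cite{Lin-Asy} to produce an obstruction $\tilde\kappa_n$ whose trace is provably below the $\delta$ demanded by Theorem \ref{B1B2-alg} (see \eqref{small-tr-001} and \eqref{0-tr-001}) and whose telescoping identity $\kappa_n-\kappa_{n+1}=\xi_n$ (equation \eqref{decomp}) makes the cancellation consistent from stage to stage. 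Your sketch waves at this (``the hypotheses $\phi_1^\ddag=\phi_2^\ddag$ and $(\phi_1)_\sharp=(\phi_2)_\sharp$ are used here to choose that preassignment'') but does not supply any mechanism; without the mapping-torus comparison there is no reason the successive corrections $w_n$ can be chosen compatibly, nor that the trace-smallness input to \ref{B1B2-alg} is ever met.
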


\begin{proof}
We only have to show the ``if" part.

Let $C=\varinjlim(C_n, \psi_n)$, where $C_n$ is a C*-algebra in the form of $P_nM_{r(n)}(C(X_n))P_n$ with $X_n$ having a finite covering dimension, and $\phi_n: C_n\to C_{n+1}$ is a unital monomorphism. Let $(\F_n)$ be an increasing sequence of finite subsets of $C$ such that $\bigcup _{n=1}^\infty \F_n$ is dense in $C$.

For each $n$ and $0<a<1$, define $$\Delta_n(a)=\inf\{\mu_{\tau\circ\phi_1}(O_a):\ \textrm{$O_a$ an open ball of $X_n$ with radius $a$}\}.$$
Since $A$ is simple, one has that $\Delta_n(a)\in(0, 1)$ for any $a\in (0, 1)$.

Consider the mapping torus $$M_{\phi_1, \phi_2}=\{f\in C([0,1], A): \textrm{$f(0)=\phi_1(a)$ and $f(1)=\phi_2(a)$ for some $a\in C$}\}.$$

Since $C$ satisfies the Universal Coefficient Theorem, the assumption of $[\phi_1]=[\phi_2]$ in $KK(C, A)$ implies the following short exact sequence splits:
$$
0\to  \underline{K}(SA) \to   \underline{K}(M_{\phi_1, \phi_2}) \to^{\pi_0} \underline{K}(C) \to 0.
$$
Denote by $\theta: \underline{K}(C)\to\underline{K}(M_{\phi_1, \phi_2})$ the splitting map.

Since $\tau\circ\phi_1=\tau\circ\phi_2$ for all $\tau\in T(A)$ and $\overline{R}_{\phi_1, \phi_2}=0$, we may also assume that
$$R_{\phi_1, \phi_2}(\theta(x))=0,$$ for all $x\in K_1(C)$.

In what follows, for any C*-algebras $C''$ and $A$ and a homomorphism $\phi: C''\to A$, for any $x=[p]-[q]\in K_0(C)$ with projections $p, q\in M_n(A)$  (for some integer $n\geq 1$) and a unitary $u\in A$ with $\|[\phi(p), { \tilde{u}}]\|<1/4$ and $\|[\phi(q), { \tilde{u}}]\|<1/4$, where ${ \tilde{u}}=\mathrm{diag}(u, ..., u)$, define
\begin{equation}\label{bu-notation}
g_{x, u}^{\phi}:=\overline{\langle (1_{n}-\phi(p)+ \phi(p){ \tilde{u}})(1_{n}-\phi(q)+ \phi(q){ \tilde{u}}^*) \rangle}\in U_n(A)/CU_n(A).
\end{equation}

Let $\delta'_n>0$(in place of $\delta$), $\eta'_n$ (in place of $\eta$), $\gamma_n'$ (in place of $\gamma$), $\mathcal G_n'\subseteq C_n$ (in place of $\mathcal G$), $\mathcal P'_n\subseteq\underline{K}(C_n)$ and $\mathcal Q'_n=\{x_{n,1}, ..., x_{n, m(n)}\}\subseteq K_0(C_n)$ (in place of $\mathcal Q$) be the constants and  finite subsets corresponding to $1/2^{n+1}$, $\mathcal F_n$ and $\Delta_n$ required by Theorem \ref{hmtp-H}. Without loss of generality, one may assume that $\mathcal [\psi_{n, n+1}](P'_n)\subseteq\mathcal P'_{n+1}$ for all $n$. Note that $\{x_{n,1}, ..., x_{n, m(n)}\}$ are free (hence generate a group $\Z^{m(n)}\subseteq K_0(C_n)$), and write $x_{n, j}=[p_{n, j}]-[q_{n, j}]$ for some projections $p_{n, j}, q_{n, j}\in M_{l(n)}(C_n)$.

Consider the image $[\psi_{n, n+1}](\Z^{m(n)})$, and fix a decomposition $$[\psi_{n, n+1}](\Z^{m(n)})=\Z^{k(n)}\oplus\mathrm{Tor}([\psi_{n, n+1}](\Z^{m(n)}))$$ for some integer $k(n)$. One also fixes a lifting of $\Z^{k(n)}$ in $\Z^{m(n)}$. Write $\{y_{n, 1}, y_{n, 2}, ..., y_{n, k(n)}\}$ a set of generators of $\Z^{k(n)}$, and $\{y_{n, 1}', y_{n, 2}',..., y_{n, k(n)}'\}$ the corresponding elements in $\Z^{m(n)}$. Note that there are integers $c_{i, j}^{(n)}$ such that $$ x_{n, i}=\sum_{j=1}^{k(n)} c_{i, j}^{(n)} y'_{n, j} + r_{i},\quad i=1, ..., m(n)$$ with $[\psi_{n, n+1}](r_i)$ a torsion element in $K_0(C_{n+1})$.

Since $TR(M_{l(n)})\leq 1$, the group $U_0(M_{l(n)}(A))/CU(M_{l(n)}(A))$ is torsion free (Theorem 6.11 of \cite{LinTAI}). Therefore, without loss of generality, one may assume that $\delta'_n$ is sufficiently small and $\mathcal G'_n$ is sufficiently large such that if $h': C\to A$ is a homomorphism and $u'\in A$ a unitary with $\|[h'(a), u']\|<\delta'_n$ for all $a\in\mathcal G_n'$,
and if $$g_{x_{n, i}, u'}^{h'},\ g_{y_{n, j}', u'}^{h'} \in U_0(M_{l(n)}(A))/CU(M_{l(n)}(A)),\quad 1\leq i\leq m(n), 1\leq j\leq k(n),$$
then
\begin{equation}\label{ak-tor}
\mathrm{dist}(g_{x_{n, i}, u'}^{h'},  \prod_{j=1}^{k(n)} (g_{y_{n, j}', u'}^{h'})^{c_{i, j}^{(n)}} )<\gamma'_n/8, \quad i=1, ..., m(n).
\end{equation}

We also assumes that $\mathrm{Bott}(h', u')|_{\mathcal P_n}$ is well defined whenever $\|[h'(a), u']\|<\delta_n'$ for all $a\in \mathcal G_n'$ for any homomorphism $h'$ and unitary $u'$, and moreover, if $h\approx_{\delta_n'} h'$ on $\mathcal G_n'$, then
$$\mathrm{Bott}(h, u)|_{\mathcal P_n}=\mathrm{Bott}(h', u)|_{\mathcal P_n}.$$

Let $C_n'$ (in place of $C'$) with $K_1(C_n')=\Z^{r(n)}\oplus\mathrm{Tor}(K_1(C_n'))$, $\iota_n: C_n'\to C_n$, $\mathcal Q_n''\subseteq\Z^{r(n)}$ (in place of $\mathcal Q$), and $\eta_n$ (in place of $\delta$) be required by Theorem \ref{B1B2-alg} for $\mathcal G'_n$ (in place of $\mathcal F$), $\mathcal P'_n$ (in place of $\mathcal P$) and $\delta_n'/4$ (in place of $\ep$). One also fixes a finite set of generators of $K_1(C'_n)$ for each $n$. Without loss of generality, one may assume that $\mathcal Q_n''$ is the set of standard generators of $\Z^{r(n)}$.

Put $\delta_n=\min\{\eta_n, \delta_n'/2\}$.

By Lemma \ref{asy-01-tr0}, there are unitaries $v_n\in U(A)$ such that
$$\mathrm{ad}(v_n)\circ\phi_1\approx_{\delta_{n+1}/4} \phi_2\quad\textrm{on $\psi_{n+1, \infty}(\mathcal G'_{n+1})$},$$
$$\rho_A(\mathrm{bott}_1(\phi_2\circ\iota_n, v_n^*v_{n+1}))(x)=0\quad\textrm{for all $x\in\psi_{n+1, \infty}(K_1(C'_{n+1}))$},$$
and
$$\|[\phi_2(a), v^*_nv_{n+1}]\|<\delta_{n+1}/2\quad\textrm{for all $a\in\psi_{n+1, \infty}(\mathcal G'_{n+1})$}.$$
Then we have that
$$\mathrm{Bott}(\phi_1\circ\iota_{n+1}, v_{n+1}v^*_n)=\mathrm{Bott}(v_n^*(\phi_1\circ\iota_{n+1}) v_n, v_n^*(v_{n+1}v_n^*)v_n)=\mathrm{Bott}(\phi_2\circ\iota_{n+1}, v_n^*v_{n+1}).$$
In particular, for any $x\in(\psi_{n+1, \infty}\circ\iota_{n+1})_{*1}(K_1(C'_{n+1}))$, one has
$$\mathrm{bott}_1(v_n^*\phi_1v_n, v_n^*v_{n+1})(x)=\mathrm{bott}_1(\phi_2, v^*_nv_{n+1})(x).$$

By applying 10.4 and 10.5 of \cite{Lin-Asy}, without loss of generality, we may assume that
$\phi_1\circ\psi_{n+1, \infty}\circ\iota_{n+1}$ and $v_{n}$ define an element
$\gamma_n\in\mathrm{Hom}_{\Lambda}(\underline{K}(C'_{n+1}), \underline{K}(M_{\phi_1\circ\iota_{n+1}, \phi_2\circ\iota_{n+1}}))$ and $[\pi_0]\circ\gamma_n=[\iota_{n+1}]$. Moreover, $\gamma_n$ factors through $H_{n}:=[\psi_{n+1, \infty}\circ\iota_{n+1}](\underline{K}(C'_{n+1}))$. Thus, one may also regard $\gamma_n$ being defined on $H_n$.

Furthermore, by 10.4 and 10.5 of \cite{Lin-Asy}, without loss of generality, we may assume that
\begin{equation}\label{small-tr-001}
\tau(\log((\phi_2\circ\psi_{n+1, \infty}\circ\iota_{n+1}(z_j^*))\overline{v}^*_n (\phi_1\circ\psi_{n+1, \infty}\circ\iota_{n+1}(z_j)) \overline{v}_n))<\delta_{n+1}
\end{equation}
where $\{z_1, ..., z_{r(n)}\}\subseteq U(M_k(C'_{n+1}))$ induces a set of standard  generators of $\Z^{r(n)}\subseteq K_1(C'_{n+1})$ and $\overline{v}_n=\mathrm{diag}(\underbrace{v_n, ..., v_n}_k)$.

Since $\bigcup_{n=1}^\infty [\psi_{n+1, \infty}\circ\iota_{n+1}](\underline{K}(C_n'))=\underline{K}(C)$ and $[\pi_0]\circ\gamma_{n}=[\iota_{n+1}]$, one concludes
\begin{equation}\label{full-image}
\underline{K}(M_{\phi_1, \phi_2})=\underline{K}(SA)+\bigcup_{n=1}^\infty\gamma_n(H_n).
\end{equation}

By passing to a subsequence, one may assume that
$$\gamma_{n}(H_n)\subseteq\underline{K}(SA)+\gamma_{n+1}(H_{n+1}),\quad n=1, 2, ...$$


By 10.6 of \cite{Lin-Asy}, $\Gamma(\mathrm{Bott}(\phi_1, v_nv_{n+1}^*))|_{H_n}=(\gamma_{n}-\gamma_{n+1}\circ[\psi_{n+1}])|_{H_n}$ defines a homomorphism $\xi_n: H_n\to \underline{K}(SA)$. Then define a map $j_n:  \underline{K}(SA)\oplus H_n \to \underline{K}(SA)\oplus H_{n+1} $ by $$(x, y)\mapsto (x+\xi_n(y), [\psi_{n+1}](y)).$$ By \eqref{full-image}, the limit is $\underline{K}(M_{\phi_1, \phi_2})$.  One has the following diagram
$$
\begin{array}{cccccc}
0 \to  & \underline{K}(SA)  &\to \underline{K}(SA) & \bigoplus H_n  &\to H_n & \to 0
 \\
 & \downarrow_{=}  & \downarrow_{=} &\swarrow_{\xi_n} \hspace{0.2in} \downarrow_{[\psi_{n+1}]} & \downarrow_{[\psi_{n+1}]}\\
0 \to  & \underline{K}(SA)  &\to \underline{K}(SA) & \bigoplus H_{n+1}  &\to H_{n+1}  &\to 0.
\end{array}
$$

By the assumption that $\overline{R}_{\phi_1, \phi_2}=0$, the map $\theta$ also induces the following $$\ker R_{\phi_1, \phi_2}=\ker\rho_A\oplus K_1(C).$$

Define $\zeta_n=\gamma_{n+1}|_{H_n}$, $\theta_n=\theta\circ[\psi_{n+1, \infty}]$, and $\kappa_n=\zeta_n-\theta_n$. Note that
$$\theta_n=\theta_{n+1}\circ[\psi_{n+2}]$$ and
$$\zeta_n-\zeta_{n+1}\circ[\psi_{n+2}]=\xi_n.$$

Since
$[\pi_0]\circ(\zeta_n-\theta_n)=0$, $\kappa_n$ maps $H_n$ into $\underline{K}(SA)$. It follows that
\begin{eqnarray}\label{decomp}
\kappa_n-\kappa_{n+1} &=& \zeta_n-\theta_n-\zeta_{n+1}\circ[\psi_{n+2}]+\theta_{n+1}\circ[\psi_{n+2}]\\
&=&\zeta_n-\zeta_{n+1}\circ[\psi_{n+2}]=\xi_n.
\end{eqnarray}

It follows from 10.3 of \cite{Lin-Asy} that there are integers $N_1\geq 1$, a $\delta_{n+1}$-$\psi_{n+1}(\mathcal G'_{n+1})$-multiplicative map $$L_n: \psi_{n+1, \infty}\circ\iota_{n+1}(C'_{n+1})\to M_{1+N_1}(M_{\psi_1, \psi_2}),$$ a unital homomorphism $h_0: \psi_{n+1, \infty}\circ\iota_{n+1}(C'_{n+1})\to M_{N_1}(C)$, and a continuous path of unitaries $\{V_n(t): t\in[0, 3/4]\}$ of $M_{1+N_1}(A)$ such that $[L_n]|_{\mathcal P'_{n+1}}$ is well defined, $V_n(0)=1_{M_{1+N_1}(A)}$,
$$[L_n\circ\psi]|_{\mathcal P_n'}=(\theta\circ[\psi_{n+1, \infty}]+[h_0\circ\psi_{n+1, \infty}])|_{\mathcal P'_n},$$
$$\pi_t\circ L_n\circ\psi_{n+1, \infty}\approx_{\delta_{n+1}/4} \mathrm{ad} V_n(t)\circ((\phi_1\circ\psi_{n+1, \infty})\oplus (h_0\circ\psi_{n+1, \infty}))$$
on $\mathcal G_{n+1}$ and $t\in[0, 3/4]$, and
$$\pi_t\circ L_n\circ\psi_{n+1, \infty}\approx_{\delta_{n+1}/4} \mathrm{ad} V_n(3/4)\circ((\phi_1\circ\psi_{n+1, \infty})\oplus (h_0\circ\psi_{n+1, \infty}))$$
on $\mathcal G_{n+1}$ and $t\in(3/4, 1)$, and
$$\pi_1\circ L_n\circ\psi_{n+1, \infty}\approx_{\delta_{n+1}/4} (\phi_1\circ\psi_{n+1, \infty})\oplus (h_0\circ\psi_{n+1, \infty})$$
on $\mathcal G_{n+1}$.
Note that
${R}_{\phi_1, \phi_2}(\theta(x))=0$ for all $x\in(\psi_{n+1, \infty})_{*1}(K_1(C_{n+1}))$. As computed in 10.4 of \cite{Lin-Asy},
\begin{equation}\label{0-tr-001}
\tau(\log((\phi_2(z)\oplus h_0(z))^* V_n^*(3/4)(\phi_1(z)\oplus h_0(z))V_n(3/4)))=0
\end{equation}
for $z=(\psi_{n+1, \infty}\circ\iota_{n+1})_{*1}(y)$, where $y$ in the fixed set of generators of $K_1(C'_{n+1})$ and for all $\tau\in T(A)$.

Define $W'_n=\mathrm{diag}(v_n, 1)\in M_{1+N_1}(A)$. Then
$$\mathrm{Bott}((\phi_1\oplus h_0)\circ\psi_{n+1, \infty}\circ\iota_{n+1}, W_n'(V_n(3/4))^*)$$
defines a homomorphism $\tilde{\kappa}_n\in\mathrm{Hom}_{\Lambda}(\underline{K}(C'_{n+1}), \underline{K}(SA)).$

By \eqref{small-tr-001}, one has
$$\tau(\log( (\phi_2\oplus h_0)\circ\psi_{n+1, \infty}\circ\iota_{n+1}(z_j)^*\tilde{V}_n^*(\phi_1\oplus h_0)\circ\psi_{n+1, \infty}\circ\iota_{n+1}(z_j) \tilde{V}_n ))<\delta_{n+1}$$
for $j=1, 2, ..., r(n)$, where $\tilde{V}_n=\mathrm{diag}(\overline{v}_n, 1)$.
Then, by \eqref{0-tr-001}, one has
$$\rho_A(\tilde{\kappa}_n(z_j))(\tau)<\delta_{n+1},\quad j=1, 2, ..., r(n).$$
It then follows from Theorem \ref{B1B2-alg} that there is a unitary $w'_n\in U(A)$ such that
$$\|[\phi_1(a), w'_n]\|<\delta'_{n+1}/4,\quad\forall a\in\psi_{n+1, \infty}(\mathcal G_{n+1}),$$ and
$$\mathrm{Bott}(\phi_1\circ\psi_{n+1, \infty}\circ\iota_{n+1}, w_n')|_{\underline{K}(C'_{n+1})}=-\tilde{\kappa}_n.$$ Put $w_n=v_n^*w_n'v_n$. One has
$$\mathrm{Bott}(\phi_2\circ\psi_{n+1, \infty}\circ\iota_{n+1}, w_n)|_{\underline{K}(C_{n+1}')}=-\tilde{\kappa}_n|_{\underline{K}(C_{n+1}')}.$$

It follows from 10.6 of \cite{Lin-Asy} that
$$\Gamma(\mathrm{Bott}(\phi_1\circ\psi_{n+1, \infty}, w'_n))=-\kappa_n\quad\textrm{and}\quad \Gamma(\mathrm{Bott}(\phi_1\circ\psi_{n+2, \infty}, w'_{n+1}))=-\kappa_{n+1},$$
where $\Gamma$ is defined in 10.6 of \cite{Lin-Asy}.
One also has
$$\Gamma(\mathrm{Bott}(\phi_1\circ\psi_{n+1, \infty}, v_nv_{n+1}^*))|_{H_n}=\zeta_n-\zeta_{n+1}\circ[\psi_{n+2}]=\xi_n.$$
Then, by \eqref{decomp}, one has
$$-\mathrm{Bott}(\phi_1\circ\psi_{n+1, \infty}, w_n')+\mathrm{Bott}(\phi_1\circ\psi_{n+1, \infty}, v_nv^*_{n+1})+\mathrm{Bott}(\phi_1\circ\psi_{n+1, \infty}, w'_{n+1})=0.$$

Define $u'_n=v_nw_n^*$, $n=1, 2, ...$ Then, $$\mathrm{ad}(u'_n)\circ\phi_1\approx_{\delta_{n+1}'/2} \phi_2, \quad\forall a\in\psi_{n+1, \infty}(\mathcal G_{n+1}),$$
and
\begin{eqnarray}\label{Bott-trivial}
&&\mathrm{Bott}(\phi_2\circ\psi_{n+1, \infty}, (u'_n)^*u'_{n+1}) \\
& =& \mathrm{Bott}(\phi_2\circ\psi_{n+1, \infty}, w_nv_n^*v_{n+1}w_{n+1}^*) \nonumber\\
&=&\mathrm{Bott}(\phi_2\circ\psi_{n+1, \infty}, w_n)+\mathrm{Bott}(\phi_2\circ\psi_{n+1, \infty}, v_n^*v_{n+1})+\mathrm{Bott}(\phi_2\circ\psi_{n+1, \infty}, w^*_{n+1}) \nonumber\\
&=&\mathrm{Bott}(\phi_1\circ\psi_{n+1, \infty}, w_n')-\mathrm{Bott}(\phi_1\circ\psi_{n+1, \infty}, v_nv^*_{n+1})-\mathrm{Bott}(\phi_1\circ\psi_{n+1, \infty}, w'_{n+1})\nonumber\\
&=&0.\nonumber
\end{eqnarray}


In what follows, we shall construct unitaries $\{s_n\}\subseteq A$ such that
\begin{equation}\label{ak-01}
\|[\phi_2\circ\psi_{n+1, \infty}(a), s_n]\|<\delta_{n+1}'/2,\quad\forall a\in \mathcal G'_{n+1}
\end{equation}
\begin{equation}\label{ak-02}
\mathrm{Bott}(\phi_2\circ\psi_{n+1, \infty}, s_n)|_{\mathcal P_{n+1}'}=0,
\end{equation} and
\begin{equation}\label{ak-03}
\mathrm{dist}(g^{\phi_2\circ\psi_{n+1, \infty}}_{x_{n+1, j}, s^*_ns_{n+1}}, g^{\phi_2\circ\psi_{n+1, \infty}}_{x_{n+1, j}, (u'_n)^*u'_{n+1}})<\gamma_{n+1}'/2.
\end{equation}
(Recall that $x_{n, j}=[p_{n, j}]-[q_{n, j}]$, $j=1, ..., m(n)$ and $n=1, ...$, and $\{x_{n, 1}, ..., x_{n, m(n)}\}$ is free.)

Put $s_1=1$, and assume that $s_1, ..., s_n$ has been constructed. Let us construct $s_{n+1}$. Define the map $\Xi'_n: \Z^{m(n+1)}\to  U_{l(n+1)}(A)/CU_{l(n+1)}(A)$ by
$$\Xi'_n(x_{n+1, j})= g^{\phi_2\circ\psi_{n+1, \infty}}_{x_{n+1, j}, s_n (u'_n)^*u'_{n+1}},\quad j=1, ..., m(n+1)$$
with the map $\phi_2\circ\psi_{n+1, \infty}$ in the place of $\phi$ in \eqref{bu-notation}.

Note that, by \eqref{Bott-trivial}, $\mathrm{Bott}(\phi_2\circ\psi_{n+1, \infty}, (u'_n)^*u'_{n+1})=0$. Then, together with \eqref{ak-02}, one has that $\mathrm{Bott}(\phi_2\circ\psi_{n+1, \infty}, s_n(u'_n)^*u'_{n+1})=0$. In particular, this implies that
$${[g^{\phi_2\circ\psi_{n+1, \infty}}_{x_{n+1, j}, s_n (u'_n)^*u'_{n+1}}]=0 \,\,\, {\rm in}\,\,\, K_1(A), \quad j=1, ..., m(n+1).}$$
Therefore, the image of map $\Xi_n'$ is in $U_0(M_{l(n+1)}(A))/CU(M_{l(n+1)}(A))$. That is, $$\Xi'_n: \Z^{m(n+1)}\to U_0(M_{l(n+1)}(A))/CU(M_{l(n+1)}(A)).$$

Recall that there are fixed decomposition $[\psi_{n+1, n+2}](\Z^{m(n+1)})=\Z^{k(n+1)}\oplus\mathrm{Tor}([\psi_{n+1, n+2}](\Z^{m(n+1)}))$ (for some integer $k(n+1)$) and a fixed lifting of $\Z^{k(n+1)}$ in $\Z^{m(n+1)}$ for each $n$. Also recall that $\{y_{n+1, 1}, y_{n+1, 2}, ..., y_{n+1, k(n+1)}\}$ is a fixed set of generators for $\Z^{k(n+1)}$, and $\{y_{n+1,1}', y_{n+1, 2}',..., y_{n+1, k(n+1)}'\}$ are their liftings in $\Z^{m(n+1)}$. Then define the map $\Xi_n:\Z^{k(n+1)}\to U_0(M_{l(n+1)}(A))/CU(M_{l(n+1)}(A))$ by $$\Xi_n(y_j)=\Xi'_n(y'_j),\quad j=1, ..., k(n+1).$$

Let $\ep''_n>0$ be arbitrary (which will be fixed later).  Applying Theorem \ref{BB-exi} to $C_{n+2}$ (in place of $C$),  $[\psi_{n+1, n+2}](\mathbb Z^{m(n+1)})$ (in place of $G$),$A$, $\mathcal G'_{n+2}$ (in place of $\F$), $\mathcal P'_{n+2}$ (in place of $\mathcal P$), $\ep''_n$ (in place of $\ep$ and in place of $\gamma$), and $\Xi_n$ (in place of $\Gamma$), there is a unitary $s_{n+1}\in A$ such that
\begin{equation}\label{ak-001}
\|[\phi_2\circ\psi_{n+1, \infty}(a), s_{n+1}]\|<\ep''_n,\quad\forall a\in \mathcal G'_{n+1},
\end{equation}
\begin{equation}\label{ak-002}
\mathrm{Bott}(\phi_2\circ\psi_{n+2, \infty}, s_{n+1})|_{\mathcal P_{n+2}'}=0,
\end{equation}
and
\begin{equation}\label{ak-003}
\mathrm{dist}(g^{\phi_2\circ\psi_{n+2, \infty}}_{y_{n+1, j}, s_{n+1}}, \Xi_n(y_{n+1, j}))<\ep''_n, \quad j=1, ..., k(n+1).
\end{equation}

 Note that it follows from \eqref{ak-002} that each element $g^{\phi_2\circ\psi_{n+1, \infty}}_{x_{n+1, j}, s_{n+1}}$ is in $U_0(M_{l(n+1)}(A))/CU(M_{l(n+1)}(A))$, and hence each element $g_{y_{n+1, j}', s_{n+1}}^{\phi_2\circ\psi_{n+1, \infty}}$ is also in $U_0(M_{l(n+1)}(A))/CU(M_{l(n+1)}(A))$.

By choosing $\ep''_n<\delta'_{n+1}/2$ sufficiently small,  it follows from \eqref{ak-001} that the unitary $s_{n+1}$ satisfies \eqref{ak-01}. Since $\pi([\psi_{n+1, n+2}(x_{n+1, j})])$ is in the subgroup generated by $\{y_{n+1, 1}, ..., y_{n+1, k(n+1)}\}$, where $\pi$ is the projection map from $[\psi_{n+1, n+2}](\Z^{m(n+1)})$ to $\Z^{k(n+1)}$, by choosing $\ep''_n< \frac{\gamma_{n+1}'}{8\sum_{i, j}|c_{i, j}^{n+1}|}$, it follows from \eqref{ak-003} and \eqref{ak-tor} that for any $j=1, ..., m(n+1)$,
\begin{eqnarray*}
&& \mathrm{dist}(g^{\phi_2\circ\psi_{n+1, \infty}}_{x_{n+1, j}, s_{n+1}}, \Xi'_n(x_{n+1, j}))\\
 &\leq & \mathrm{dist}(g^{\phi_2\circ\psi_{n+1, \infty}}_{x_{n+1, j}, s_{n+1}},   \prod_{j=1}^{k(n+1)} (g_{y_{n+1, j}', s_{n+1}}^{\phi_2\circ\psi_{n+1, \infty}})^{c_{i, j}^{(n+1)}}) + \\
 && \mathrm{dist}(\prod_{j=1}^{k(n+1)} (g_{y_{n+1, j}', s_{n+1}}^{\phi_2\circ\psi_{n+1, \infty}})^{c_{i, j}^{(n+1)}}, \Xi'_n(x_{n+1, j}))\\
 &< &\frac{\gamma_{n+1}'}{8}+ \mathrm{dist}(\prod_{j=1}^{k(n+1)} (g_{y_{n+1, j}, s_{n+1}}^{\phi_2\circ\psi_{n+2, \infty}})^{c_{i, j}^{(n+1)}}, \Xi_n(\sum_{j=1}^{k(n+1)} c_{i, j}^{(n+1)} y_{n+1, j}))\\
 &<&\frac{\gamma_{n+1}'}{8}+\sum_{i, j}|c_{i, j}^{n+1}|\epsilon_n''<\frac{\gamma_{n+1}'}{4}.
\end{eqnarray*}

In other words,
$$\mathrm{dist}(g^{\phi_2\circ\psi_{n+1, \infty}}_{x_{n+1, j}, s_{n+1}}, g^{\phi_2\circ\psi_{n+1, \infty}}_{x_{n+1, j}, s_n (u'_n)^*u'_{n+1}})<\gamma'_{n+1}/4, \quad j=1, ..., m(n+1).$$ Hence
$$\mathrm{dist}(g^{\phi_2\circ\psi_{n+1, \infty}}_{x_{n+1, j}, s_n^*s_{n+1}}, g^{\phi_2\circ\psi_{n+1, \infty}}_{x_{n+1, j}, (u'_n)^*u'_{n+1}})<\gamma'_{n+1}/2, \quad j=1, ..., m(n+1),$$
which verifies \eqref{ak-03}.
Therefore, one obtains the sequence of unitaries $(s_n)$ satisfying \eqref{ak-01}, \eqref{ak-02} and \eqref{ak-03}.

Define $u_n=u_n's_n^*$, $n=1, 2, ...$ Then it follows from \eqref{ak-01} and \eqref{ak-02} that
\begin{equation}\label{hp-cond-1}
\|[\phi_2\circ\psi_{n+1, \infty}, u^*_nu_{n+1}]\|<\delta_n',
\end{equation}
and
\begin{equation}\label{hp-cond-2}
\mathrm{Bott}(\phi_2\circ\psi_{n+1, \infty}, u^*_nu_{n+1})|_{\mathcal P_{n+1}'}=0.
\end{equation}

It also follows from \eqref{ak-03} that
$$\mathrm{dist}(g^{\phi_2\circ\psi_{n+1, \infty}}_{x_{n+1, j}, s_n(u'_n)^*u'_{n+1}s_{n+1}^*}, \overline{1_A})<\gamma_{n+1}', \quad j=1, ..., m(n+1),$$
which is
\begin{equation}\label{hp-cond-3}
\mathrm{dist}(g^{\phi_2\circ\psi_{n+1, \infty}}_{x_{n+1, j}, u_n^*u_{n+1}}, \overline{1_A})<\gamma_{n+1}', \quad j=1, ..., m(n+1).
\end{equation}
Moreover, it also follows from the definition of $\Delta_n$ such that
\begin{equation}\label{hp-cond-4}
\mu_{\tau\circ\phi_2\circ\psi_{n, \infty}}(O_a)\geq\Delta_n(a),\quad \forall \tau\in\mathrm{T(A)},
\end{equation} where $O_a$ is any open ball in $X_n$ with radius $a\geq \eta'_n$.

With \eqref{hp-cond-1}, \eqref{hp-cond-2}, \eqref{hp-cond-3} and \eqref{hp-cond-4}, one applies Theorem \ref{hmtp-H} to obtain a path of unitaries $\{z_n(t): t\in [0, 1]\}$ in $A$ such that $$z_n(0)=0,\quad z_n(1)=u_n^*u_{n+1},$$ and $$\|[z(t), \phi_2\circ\psi_{n+1, \infty}]\|<1/2^{n+1},\quad\forall t\in[0, 1].$$

Define $$u(t+n-1)=u_nz_{n+1}(t),\quad t\in(0, 1],$$ and then $\{z(t); t\in[0, \infty)\}$ is a continuous path of unitary in $A$.

Note that
\begin{eqnarray*}
\mathrm{ad} u(t+n-1)\circ\phi_1&\approx_{\delta'_{n+1}}& \mathrm{ad}(z_{n+1}(t))\circ\phi_2\\
&\approx_{1/2^{n+1}}& \phi_2
\end{eqnarray*}
on $\F_{n+1}$ for all $t\in(0, 1)$.
It then follows that
$$\lim_{t\to\infty} u^*(t)\phi_1(a) u(t)=\phi_2(a)$$
for all $a\in C$, as desired.
\end{proof}

Let $C$ and $A$ be two unital \CA s. Recall that (see10.2 of \cite{Lin-Asy})
$$
H_1(K_0(C), K_1(A)):=\{x\in K_1(A): h([1_C])=x \,\,\,
{\rm for\,\,\, some}\,\,\, h\in \mathrm{Hom}(K_0(C),K_1(A))\}.
$$

\begin{lem}\label{StrongL}
Let $C$ be a unital AH-algebra and let $A$ be a unital separable simple
\CA\, with $\tr(A)\le 1.$ Suppose that $\phi,\psi: C\to A$ are two unital monomorphisms. Suppose that $\{{\cal F}_n\}$ is an increasing sequence
of finite subsets of $C$ such that $\cup_{n=1}^{\infty} {\cal F}_n$ is dense in $C,$ and suppose that $\{{\cal P}_n\}$ is an increasing sequence of finite subsets of $K_1(C)$ such that its union is $K_1(C).$ Suppose also that there is a sequence of decreasing positive numbers $\dt_n>0$ with $\sum_{n=1}^{\infty}\dt_n<\infty$ and a sequence of unitaries $\{u_n\}\subset A$ such that
\beq\label{StrL-1}
{\rm Ad}\, u_n\circ \phi\approx_{\dt_n}\psi\,\,\,{\rm on}\,\,\, {\cal F}_n\andeqn\\
\rho_A({\rm bott}_1(\psi, u_n^*u_{n+1})(x)=0\tforal x\in {\cal P}_n.
\eneq
Then
we may further require that $u_n\in U_0(A)$ if
$H_1(K_0(C), K_1(A))=K_1(A).$

\end{lem}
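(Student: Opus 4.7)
The plan is to correct each $u_n$ by right-multiplying with a unitary $w_n\in A$ whose $K_1$-class equals $-[u_n]\in K_1(A)$, so that $\tilde u_n:=u_nw_n$ lies in $U_0(A)$ while the original approximate intertwining and the bott condition transfer to the new sequence. The whole argument leans on combining the hypothesis $H_1(K_0(C),K_1(A))=K_1(A)$ with Theorem \ref{B1B2-alg} to produce the correcting $w_n$.

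First, by the hypothesis, for each $n$ I can pick a homomorphism $h_n:K_0(C)\to K_1(A)$ with $h_n([1_C])=-[u_n]$. Since $C$ is AH, write $C=\varinjlim(C_k,\psi_k)$ with $C_k=P_kM_{r(k)}(C(X_k))P_k$, and for each $n$ choose $k=k(n)$ large enough that $\mathcal F_n$ is approximately contained in $\psi_{k,\infty}(C_k)$ and $\mathcal P_n\subseteq [\psi_{k,\infty}]_{*1}(K_1(C_k))$. Set $\iota_n:=\psi_{k(n),\infty}:C_{k(n)}\to C$ and choose $\kappa_n\in\mathrm{Hom}_\Lambda(\underline K(C_{k(n)}\otimes C(\mathbb T)),\underline K(A))$ so that $\kappa_n\circ\boldsymbol{\beta}|_{K_0(C_{k(n)})}=h_n\circ[\iota_n]_{*0}$ and $\kappa_n\circ\boldsymbol{\beta}|_{K_1(C_{k(n)})}=0$. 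The latter choice makes the tracial smallness hypothesis of Theorem \ref{B1B2-alg} vacuous, and the theorem then produces $w_n\in A$ with $\|[\phi(c),w_n]\|<\varepsilon_n$ for $c\in\mathcal F_n$, with $[w_n]=-[u_n]$ (evaluating the $\mathrm{bott}_0$-component at $[1_{C_{k(n)}}]$), and with $\mathrm{bott}_1(\phi\circ\iota_n,w_n)=0$; here $\varepsilon_n>0$ is a summable sequence chosen in advance to swallow all subsequent estimates.

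Now define $\tilde u_n:=u_nw_n$. Immediately $[\tilde u_n]=0$, so $\tilde u_n\in U_0(A)$. The approximate intertwining transfers: $\mathrm{Ad}(\tilde u_n)\phi(c)=u_nw_n\phi(c)w_n^*u_n^*\approx_{2\varepsilon_n} u_n\phi(c)u_n^*\approx_{\delta_n}\psi(c)$ on $\mathcal F_n$. For the bott condition write $\tilde u_n^*\tilde u_{n+1}=w_n^*(u_n^*u_{n+1})w_{n+1}$; each factor approximately commutes with $\phi$ on $\mathcal F_n$, the middle one because $\mathrm{Ad}(u_n^*u_{n+1})\phi=\mathrm{Ad}(u_n^*)\mathrm{Ad}(u_{n+1})\phi\approx\mathrm{Ad}(u_n^*)\psi\approx\phi$. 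Additivity of the Bott element on products of approximately commuting unitaries gives $\mathrm{bott}_1(\phi,\tilde u_n^*\tilde u_{n+1})=\mathrm{bott}_1(\phi,u_n^*u_{n+1})$ on $\mathcal P_n$, since $\mathrm{bott}_1(\phi,w_n)$ and $\mathrm{bott}_1(\phi,w_{n+1})$ vanish by construction. Using the naturality $\mathrm{bott}_1(\psi,\cdot)=\mathrm{bott}_1(\mathrm{Ad}(u_n)\phi,\cdot)$, valid up to the intertwining error $\delta_n$, the hypothesis $\rho_A\circ\mathrm{bott}_1(\psi,u_n^*u_{n+1})|_{\mathcal P_n}=0$ transfers to $\rho_A\circ\mathrm{bott}_1(\psi,\tilde u_n^*\tilde u_{n+1})|_{\mathcal P_n}=0$, as required.

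The hard part is the coherent bookkeeping: the $\varepsilon_n$'s must be summably small enough that bott additivity, the $\phi$-versus-$\psi$ comparison, and the approximate intertwining combine into a new summable error sequence $\delta_n'$; and the blocks $C_{k(n)}$ together with the finite subsets $\mathcal F_n$ and $\mathcal P_n$ must be nested compatibly so that the iterated applications of Theorem \ref{B1B2-alg} proceed uniformly. A subtle technical point is identifying $\mathrm{bott}_1(\phi,\cdot)$ with $\mathrm{bott}_1(\psi,\cdot)$, since these are a priori defined for unitaries approximately commuting with $\phi$ or $\psi$ respectively; but the approximate intertwining $\mathrm{Ad}(u_n)\phi\approx\psi$ makes these identifications valid up to errors absorbed into $\delta_n'$, which can be arranged to still be summable by shrinking the original $\delta_n$'s at the outset.
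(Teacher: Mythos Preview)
Your approach is the right one and is essentially what the paper intends (it refers to Lemma~10.4 of \cite{Lnclasn}, replacing its existence lemma by Theorem~\ref{B1B2-alg}): manufacture, via Theorem~\ref{B1B2-alg}, correcting unitaries $w_n$ that almost commute with the given monomorphism, carry the prescribed $K_1$-class $-[u_n]$ coming from $H_1(K_0(C),K_1(A))=K_1(A)$, and have vanishing $\mathrm{bott}_1$.

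There is, however, a slip in your commutation bookkeeping that makes the argument as written fail. You claim $u_n^*u_{n+1}$ approximately commutes with $\phi$, justified by ``$\mathrm{Ad}(u_n^*u_{n+1})\phi=\mathrm{Ad}(u_n^*)\mathrm{Ad}(u_{n+1})\phi$''. With the paper's convention $\mathrm{Ad}(u)(a)=u^*au$ one has $\mathrm{Ad}(u)\circ\mathrm{Ad}(v)=\mathrm{Ad}(vu)$, so $\mathrm{Ad}(u_n^*)\circ\mathrm{Ad}(u_{n+1})=\mathrm{Ad}(u_{n+1}u_n^*)$, not $\mathrm{Ad}(u_n^*u_{n+1})$. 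In fact $u_n^*u_{n+1}$ approximately commutes with $\psi$, while it is $u_{n+1}u_n^*$ that approximately commutes with $\phi$ (compare the line in the proof of Theorem~\ref{MT1}). Since you produced $w_n$ commuting with $\phi$ and then tried to split $\mathrm{bott}_1(\phi,w_n^*\,(u_n^*u_{n+1})\,w_{n+1})$ additively, the middle factor is not $\phi$-commuting and the additivity step is not justified.

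The repair is immediate and does not change your strategy: either apply Theorem~\ref{B1B2-alg} to $h=\psi$ rather than $\phi$ (so that $w_n$ approximately commutes with $\psi$, all three factors of $\tilde u_n^*\tilde u_{n+1}$ are $\psi$-commuting, and additivity of $\mathrm{bott}_1(\psi,\cdot)$ applies directly), or keep $w_n$ $\phi$-commuting but set $\tilde u_n:=w_nu_n$ and factor
\[
\tilde u_n^*\tilde u_{n+1}=(u_n^*w_n^*u_n)\,(u_n^*u_{n+1})\,(u_{n+1}^*w_{n+1}u_{n+1}),
\]
where each bracketed factor approximately commutes with $\psi$ and the outer two have $\mathrm{bott}_1(\psi,\cdot)=\mathrm{bott}_1(\phi,w_n^{\pm1})=0$ by the conjugation invariance $\mathrm{Bott}(\mathrm{Ad}(u)h,u^*vu)=\mathrm{Bott}(h,v)$. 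With either fix your remaining bookkeeping goes through as you describe.
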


\begin{proof}
The proof  is exactly the same as that of Lemma 10.4 of \cite{Lnclasn}.
Note that, we will apply the second part of \ref{B1B2-alg} instead of 6.3 of \cite{Lnclasn}.
\end{proof}

\begin{thm}\label{StrongT}
Let $C$ be a unital AH-algebra and let $A$ be a unital separable simple
\CA\, with $\tr(A)\le 1.$ Suppose that $H_1(K_0(C), K_1(A))=K_1(A)$ and suppose that $\phi, \psi: C\to A$ are two unital monomorphisms which are asymptotically unitarily equivalent. Then they are strongly asymptotically unitarily equivalent, i.e., there exists a continuous path of unitaries
$\{u(t): t\in [0,\infty)\}\subset U(A)$ such that
$$
u(0)=1_A\andeqn \lim_{t\to\infty} u(t)^*\phi(c)u(t)=\psi(c)\tforal c\in C.
$$
\end{thm}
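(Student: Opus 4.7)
My plan is to adapt the construction in the proof of Theorem \ref{MT1}, substituting Lemma \ref{StrongL} in place of Lemma \ref{asy-01-tr0}, and then prepending a short path from $1_A$ to the starting unitary of the resulting asymptotic path.

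First, since $\phi$ and $\psi$ are assumed to be asymptotically unitarily equivalent, the ``only if'' direction of Theorem \ref{MT1} (which is the routine direction) gives
\[
[\phi]=[\psi]\,\,\,\text{in}\,\,\,KK(C,A),\qquad \phi^{\ddag}=\psi^{\ddag},\qquad \phi_{\sharp}=\psi_{\sharp},\qquad \overline{R}_{\phi,\psi}=0.
\]
In particular, the hypotheses of Lemma \ref{asy-01-tr0} hold for $\phi$ and $\psi$. Under the additional assumption that $H_1(K_0(C),K_1(A))=K_1(A)$, Lemma \ref{StrongL} then upgrades the conclusion of Lemma \ref{asy-01-tr0}: we can find approximating unitaries $v_n\in U_0(A)$ (rather than merely in $U(A)$) with the same asymptotic and Bott-triviality properties.

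Next, I would rerun the construction of Theorem \ref{MT1} word-for-word, but feeding in these $v_n\in U_0(A)$. It needs to be checked that the auxiliary unitaries $w_n$ (produced from Theorem \ref{B1B2-alg}) and $s_n$ (produced from Theorem \ref{BB-exi}) can also be chosen in $U_0(A)$: in Theorem \ref{B1B2-alg} the unitary $u$ arises as a conjugate of $u_0\oplus(1-p_0)$ with $u_0$ supplied by Lemma 6.1 of \cite{Lnclasn} (already in the connected component), and in Theorem \ref{BB-exi} the unitary $w'=(1-p)\oplus\langle\Phi(1\oplus z)\rangle\oplus\bigoplus_i 1$ is a conjugate of something that evidently lies in $U_0(A)$. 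Consequently every $u_n = v_n w_n^{*} s_n^{*}$ lies in $U_0(A)$; in particular $u_1\in U_0(A)$.

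Finally, because $u_1\in U_0(A)$, there is a continuous path of unitaries $\{w(s):s\in[0,1]\}\subset U(A)$ with $w(0)=1_A$ and $w(1)=u_1$. Splicing $w$ (on $[0,1]$) onto the path $u(t+n-1)=u_nz_{n+1}(t)$ constructed in Theorem \ref{MT1} (reparametrized to $[1,\infty)$) produces a continuous path $\{u(t):t\in[0,\infty)\}\subset U(A)$ with $u(0)=1_A$ and $\lim_{t\to\infty}u(t)^{*}\phi(c)u(t)=\psi(c)$ for all $c\in C$. The main delicacy is Step 2/3: verifying that after invoking Lemma \ref{StrongL}, the auxiliary constructions in Theorems \ref{B1B2-alg} and \ref{BB-exi} preserve membership in $U_0(A)$; this is where I expect the most care is required, but it is really just a bookkeeping check of the formulas in those proofs, since the outputs are built from interval-algebra unitaries and perturbations of identities.
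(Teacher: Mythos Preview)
Your proposal is correct and follows essentially the same approach as the paper. The paper's proof is a one-line reference to Theorem 10.5 of \cite{Lnclasn}, replacing Lemma 10.4 there by Lemma \ref{StrongL}; unpacking that reference amounts precisely to your plan of rerunning the construction of Theorem \ref{MT1} with the approximating unitaries $v_n$ chosen in $U_0(A)$ via Lemma \ref{StrongL}, checking that the auxiliary unitaries from Theorems \ref{B1B2-alg} and \ref{BB-exi} also lie in $U_0(A)$ (your observations about interval-algebra unitaries and the form $u_0\oplus(1-p_0)$ are the right ones), and then prepending a path from $1_A$ to $u_1$.
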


\begin{proof}
The proof is exactly the same as that of Theorem 10.5 in \cite{Lnclasn}.
However, we apply \ref{StrongL} instead of Lemma 10.4 of \cite{Lnclasn} as needed in the proof of \cite{Lnclasn}.
\end{proof}

\begin{cor}\label{StrongC}
Let $X$ be a compact metric space and let $A$ be a unital separable simple \CA\, with $\tr(A)\le 1.$ Suppose that $\phi, \psi: C\to A$ are two unital
monomorphisms. Then $\phi$ and $\psi$ are strongly asymptotically unitarily equivalent if and only if
\beq\label{StrC-1}
[\phi]=[\psi]\,\,\,{\rm in}\,\,\, KK(C(X), A), \,\phi^{\ddag}=\psi^{\ddag},\\
\tau\circ\phi=\tau\circ \psi\tand {\overline{R_{\phi, \psi}}}=0.
\eneq

\end{cor}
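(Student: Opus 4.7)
The plan is to derive Corollary \ref{StrongC} as a direct consequence of Theorems \ref{MT1} and \ref{StrongT}, together with one small K-theoretic check that is specific to the commutative case $C=C(X)$.

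The ``only if'' direction is routine from the definitions and the standard mapping-torus analysis: given a continuous path $\{u(t): t\in[0,\infty)\}\subset U(A)$ with $u(0)=1_A$ implementing the asymptotic equivalence, the argument of 3.3 of \cite{Lin-Asy} (or 10.5 of \cite{Lnclasn}) yields all four conditions in \eqref{StrC-1}. In particular, the existence of the continuous path with $u(0)=1_A$ forces the rotation class $\overline{R_{\phi,\psi}}$ to vanish because the path itself gives a trivialization in the mapping torus.

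For the ``if'' direction, the four conditions in \eqref{StrC-1} are exactly the hypotheses of Theorem \ref{MT1} (note $(\phi)_\sharp=(\psi)_\sharp$ is equivalent to $\tau\circ\phi=\tau\circ\psi$ for every $\tau\in T(A)$). Theorem \ref{MT1} then produces a continuous path $\{U(t): t\in[0,\infty)\}\subset U(A)$ with $\lim_{t\to\infty}U(t)^*\phi(c)U(t)=\psi(c)$ for all $c\in C(X)$, so $\phi$ and $\psi$ are asymptotically unitarily equivalent. To promote this to a \emph{strong} asymptotic unitary equivalence (i.e.\ with $U(0)=1_A$), I invoke Theorem \ref{StrongT}. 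Its sole extra hypothesis is
$$
H_1(K_0(C(X)), K_1(A))=K_1(A).
$$

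The main step of the argument is therefore this K-theoretic verification, but in the commutative setting it is immediate. Fix any point $x_0\in X$ and let $\chi=\mathrm{ev}_{x_0}: C(X)\to\mathbb{C}$ be the corresponding character. Then $\chi_*\colon K_0(C(X))\to K_0(\mathbb{C})=\mathbb{Z}$ sends $[1_{C(X)}]\mapsto 1$, so the short exact sequence
$$
0\to \ker(\chi_*)\to K_0(C(X)) \to \mathbb{Z}\to 0
$$
splits (since $\mathbb{Z}$ is free). Given any $x\in K_1(A)$, setting $h([1_{C(X)}])=x$ and $h|_{\ker(\chi_*)}=0$ defines a homomorphism $h\in\mathrm{Hom}(K_0(C(X)),K_1(A))$ with $h([1_{C(X)}])=x$, whence $x\in H_1(K_0(C(X)),K_1(A))$. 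Therefore $H_1(K_0(C(X)),K_1(A))=K_1(A)$, Theorem \ref{StrongT} applies, and it produces the desired continuous path $\{u(t): t\in[0,\infty)\}\subset U(A)$ with $u(0)=1_A$ and $\lim_{t\to\infty}u(t)^*\phi(c)u(t)=\psi(c)$ for every $c\in C(X)$. No serious obstacle appears: both heavy-lifting theorems are already in place, and the only new ingredient is the trivial splitting afforded by a point evaluation on $C(X)$.
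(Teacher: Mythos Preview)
Your proposal is correct and follows essentially the same approach as the paper. The paper's proof simply notes that $K_0(C(X))=(\Z\cdot [1_{C(X)}])\oplus G$ for some subgroup $G$ and then defines $h$ exactly as you do; your point-evaluation argument is just a concrete way of exhibiting this splitting, and the appeal to Theorems \ref{MT1} and \ref{StrongT} (which the paper leaves implicit) is identical.
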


\begin{proof}
Note that $K_0(C(X))=(\Z\cdot [1_{C(X)}])\oplus G$ for some abelian subgroup $G$ of $K_0(C(X)).$ For each $x\in K_1(A),$ define
a \hm\, $h: K_0(C(X))\to K_1(A)$ by
$h([1_{C(X)}])=x$ and $h|_G=0.$ In other words, one has that
$H_1(K_0(C), K_1(A))=K_1(A).$
\end{proof}

\begin{prop}\label{MP1}
Let $C$ be a unital amenable \CA\, satisfying the UCT and let $A$ be a unital separable simple \CA\, with $\tr(A)\le 1.$  Suppose that
$\phi$ and $\psi$ are two unital monomorphisms. Suppose also that
\beq\label{MP1-1}
[\phi]=[\psi]\,\,\,{\rm in}\,\,\,KL(C,A),\\
\tau\circ \phi=\tau\circ \psi\tforal \tau\in T(A)\andeqn\\
R_{\phi, \psi}(K_1(M_{\phi, \psi}))\subset \rho_A(K_0(A)).
\eneq
Then
\beq\label{MP1-2}
\phi^{\dag}=\psi^{\dag}.
\eneq
\end{prop}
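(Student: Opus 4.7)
The plan is to reduce $\phi^{\dag}=\psi^{\dag}$ to a pointwise determinant computation via the mapping torus $M_{\phi,\psi}$. Given any unitary $u \in U(M_k(C))$, it suffices to prove $\phi(u)\psi(u)^{-1} \in DU_\infty(A)$, which, since $U_\infty(A)/DU_\infty(A)$ is abelian, is equivalent to $\phi(u)^{-1}\psi(u) \in DU_\infty(A)$. Since $[\phi]=[\psi]$ in $KL(C,A)$ forces $\phi_{*1}=\psi_{*1}$, after padding with the identity in $M_{k+l}(A)$ for a suitable $l$ I can choose a continuous piecewise-smooth path $\tilde u: [0,1] \to U(M_{k+l}(A))$ with $\tilde u(0)=\phi(u)\oplus 1_l$ and $\tilde u(1)=\psi(u)\oplus 1_l$. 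Setting $v=u\oplus 1_l \in U(M_{k+l}(C))$, the path $\tilde u$ is then a unitary in $M_{k+l}(M_{\phi,\psi})$ representing a class $[\tilde u] \in K_1(M_{\phi,\psi})$ that lifts $[u] \in K_1(C)$ under the exact sequence $\eta_1(M_{\phi,\psi})$.

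Unpacking the rotation map, $R_{\phi,\psi}([\tilde u])=\mathrm{Det}(\tilde w)$ where $\tilde w(t)=\tilde u(0)^{-1}\tilde u(t)$ is a piecewise-smooth path in $U(M_{k+l}(A))$ from $1$ to $\phi(u)^{-1}\psi(u)\oplus 1_l$. The hypothesis $R_{\phi,\psi}(K_1(M_{\phi,\psi})) \subset \rho_A(K_0(A))$ supplies $x \in K_0(A)$ with $\mathrm{Det}(\tilde w)=\rho_A(x)$ in $\mathrm{Aff}(T(A))$. Using $TR(A) \leq 1$ to realize $x=[p]-[q]$ with projections $p,q$ in some matrix algebra over $A$, the explicit loop $\sigma(t)=e^{2\pi i t p} e^{-2\pi i t q}$ at $1$ satisfies $\mathrm{Det}(\sigma)=\rho_A(x)$. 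After suitable stabilization and concatenation, $\sigma^{-1}\cdot\tilde w$ is a piecewise-smooth path from $1$ to (a stabilization of) $\phi(u)^{-1}\psi(u)$ whose de la Harpe--Skandalis determinant is exactly zero in $\mathrm{Aff}(T(A))$.

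Finally, I invoke the strict form of the de la Harpe--Skandalis theorem: for $A$ unital simple with $TR(A)\leq 1$, any $v \in U_0(M_n(A))$ that is the endpoint of a piecewise-smooth path starting at $1$ with $\mathrm{Det}=0$ exactly must lie in $DU(M_n(A))$. This gives $\phi(u)^{-1}\psi(u)\in DU_\infty(A)$ (after stabilization), which is the desired equality $\phi^{\dag}(\bar u)=\psi^{\dag}(\bar u)$; since $u$ was arbitrary, $\phi^\dag=\psi^\dag$. The main obstacle is the passage from the closure form of Thomsen's theorem (recorded in Definition \ref{DDet}) to the strict $DU$ form: this is exactly where the hypothesis $R_{\phi,\psi}\subset\rho_A(K_0(A))$ without closure is essential, together with the abundance of projections in $TR(A)\leq 1$ algebras, which permits realizing every element of $\rho_A(K_0(A))$ as the determinant of an explicit loop and thereby reducing to null-determinant paths. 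The condition $\tau\circ\phi=\tau\circ\psi$ is used only to ensure that $R_{\phi,\psi}$ is well-defined on $K_1(M_{\phi,\psi})$, per \ref{Rot-M}.
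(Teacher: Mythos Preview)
Your proposal is correct and follows essentially the same route as the paper: lift the unitary to the mapping torus, use the hypothesis to see that the de la Harpe--Skandalis determinant of the resulting path lands exactly in $\rho_A(K_0(A))$, cancel it with an explicit projection loop, and conclude membership in $DU$. Your version is in fact slightly more streamlined than the paper's, which takes a short detour through a local splitting $\theta_G: G \to K_1(M_{\phi,\psi})$ before reaching $R_{\phi,\psi}([z])\in\rho_A(K_0(A))$; since the hypothesis already says $R_{\phi,\psi}$ maps \emph{all} of $K_1(M_{\phi,\psi})$ into $\rho_A(K_0(A))$, you correctly observe that one can apply it directly to the class of the chosen path.

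The step you flag as ``the main obstacle''---that in a unital simple $A$ with $\mathrm{TR}(A)\le 1$, a unitary in $U_0(M_n(A))$ whose determinant lies exactly in $\rho_A(K_0(A))$ (not merely its closure) belongs to $DU(M_n(A))$---is precisely what the paper invokes as a result of P.~Ng \cite{Ng}; it is not part of the standard Thomsen/de la Harpe--Skandalis package recorded in \ref{DDet}, so you should cite it rather than assert it.
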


\begin{proof}

Let $u\in M_l(C)$ be a unitary, where $l\ge 1$ is an integer.
Let $z\in M_l(M_{\phi,\psi})$ be a unitary
which is piecewise smooth on $[0,1]$ such that
$\pi_0\circ z=\phi(u)$ and $\pi_1\circ z =\psi(u).$
Let $G$ be a finitely generated subgroup of $K_1(C)$ which contains
$[u].$
By the assumption, there is an injective \hm\, $\theta_G: G\to K_1(M_{\phi,\psi})$ such that
\beq\label{MP1-3}
(\pi_0)_{*1}\circ \theta_G={\rm id}_{G}\andeqn
R_{\phi, \psi}\circ \theta_G \in  \textrm{Hom}(G, \rho_A(K_0(A))).
\eneq
It follows that there exists projections
$p, q\in M_{l'}(A)$ such that
\beq\label{MP1-4}
\theta([u])=[zv]\in K_1(M_{\phi, \psi}),
\eneq
where $v(t)=(e^{2\pi i t}p+(1-p))(e^{-2\pi i t}p+(1-p))\in M_{l'}(M_{\phi, \psi}).$
To simplify the notation, without loss of generality, we may assume
that $l=l'.$
By (\ref{MP1-3}),
\beq\label{MP1-5}
R_{\phi, \psi}([zv])\in \rho_A(K_0(A)).
\eneq
Since $R_{\phi, \psi}([v])\in \rho_A(K_0(A)),$ one computes that
\beq\label{MP1-6}
R_{\phi, \psi}([z])\in \rho_A(K_0(A)).
\eneq
Now let $w(t)\in C([0,1], A)$ be a unitary which is piecewise smooth such that $w(0)=\psi(u)^*\phi(u)$ and $w(1)=1_{M_l(A)}.$
Then
\beq\label{MP1-7}
\psi(u)w(t)\in M_l(M_{\phi, \psi}).
\eneq
Moreover $[z]=[\psi(u)w]$ in $K_1(M_{\phi,\psi}).$
It follows that, for any $\tau\in T(A),$
\beq\label{MP1-8}
{1\over{2\pi i}}\int_0^1 \tau({d(w(t))\over{dt}} w^*(t)))dt &=&
{1\over{2\pi i}}\int_0^1 \tau(\psi(u) {d(w(t))\over{dt}} w^*(t))\psi(u)^*)dt\\
&=&{1\over{2\pi i}}\int_0^1 \tau({d(\psi(u)w(t))\over{dt}})(\psi(u)w(t)^*)dt\\
&=& R_{\phi, \psi}([z])(\tau).
\eneq
Thus, by (\ref{MP1-6}), there exists $x\in K_0(A)$ such that
\beq\label{MP1-8+}
\mathrm{Det}([w])(\tau) =\rho_A(x)(\tau)
\eneq
for all $\tau\in T(A).$
It follows from a result of P. Ng (\cite{Ng}) that
$$
\psi(u^*)\phi(u)\in DU(M_l(A)).
$$
Since this holds for all unitaries $u\in M_l(C),$ it follows
that
$$
\phi^{\dag}=\psi^{\dag}.
$$
\end{proof}

\begin{cor}\label{MC1}
Let $C$ be a unital AH-algebra and let A be a unital separable simple C*-algebra with $TR(A)\leq1$. Suppose that $\phi_1, \phi_2: C\to A$ are two unital monomorphisms. Then
$\phi$ and $\psi$ are asymptotically unitarily equivalent if and only if
\beq\label{MC1-1}
[\phi_1]=[\phi_2]\ \textrm{in}\ KK(C, A),\\
\tau\circ \phi=\tau\circ \psi\tforal \tau\in T(A)\tand\\
\overline{R}_{\phi_1, \phi_2}=0.
\eneq

\end{cor}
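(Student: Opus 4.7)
My plan is to deduce Corollary \ref{MC1} from Theorem \ref{MT1} by verifying the two hypotheses of \ref{MT1} that are not listed in \ref{MC1}, namely $(\phi_1)_\sharp = (\phi_2)_\sharp$ and $\phi_1^{\ddag} = \phi_2^{\ddag}$. The ``only if'' direction is routine: asymptotic unitary equivalence implies approximate unitary equivalence (giving the KK-equality and the tracial identity), and a continuous path $\{u(t)\}_{t \in [0,\infty)}$ implementing it trivializes the mapping torus at infinity, forcing $\overline{R}_{\phi_1,\phi_2} = 0$.

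For the ``if'' direction, the identity $(\phi_1)_\sharp = (\phi_2)_\sharp$ is immediate from \ref{NN1}: by definition $(\phi_i)_\sharp(f)(\tau) = f(\tau \circ \phi_i)$, so the assumption $\tau \circ \phi_1 = \tau \circ \phi_2$ gives $(\phi_1)_\sharp = (\phi_2)_\sharp$ at once. The substantive step is $\phi_1^{\ddag} = \phi_2^{\ddag}$, which I would derive via Proposition \ref{MP1}. That proposition requires $[\phi_1] = [\phi_2]$ in $KL(C,A)$ (implied by the assumed KK-equality), the tracial identity (assumed), and the inclusion $R_{\phi_1,\phi_2}(K_1(M_{\phi_1,\phi_2})) \subseteq \rho_A(K_0(A))$.

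For this last inclusion, since $[\phi_1] = [\phi_2]$ in $KK(C,A)$, the sequence $\eta_1(M_{\phi_1,\phi_2})$ of \ref{D-Maf} splits; choose a lifting $\theta: K_1(C) \to K_1(M_{\phi_1,\phi_2})$ so that $K_1(M_{\phi_1,\phi_2}) = [\imath]_1(K_0(A)) + \theta(K_1(C))$. By Lemma \ref{DrL}, $R_{\phi_1,\phi_2} \circ [\imath]_1 = \rho_A$, which takes values in $\rho_A(K_0(A))$. The hypothesis $\overline{R}_{\phi_1,\phi_2} = 0$ asserts that $R_{\phi_1,\phi_2} \circ \theta \in \mathcal{R}_0$, i.e.\ factors as $\rho_A \circ d$ for some $d: K_1(C) \to K_0(A)$, so this summand also lands in $\rho_A(K_0(A))$. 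Both contributions together give the required inclusion; Proposition \ref{MP1} then yields $\phi_1^{\dag} = \phi_2^{\dag}$, and since $DU_\infty(A) \subseteq CU_\infty(A)$ this passes to the further quotient to give $\phi_1^{\ddag} = \phi_2^{\ddag}$. With all hypotheses of Theorem \ref{MT1} in hand, the corollary follows. I anticipate no real obstacle: the only step requiring care is the decomposition of $K_1(M_{\phi_1,\phi_2})$ above, and this is essentially formal once the KK-equality splits the mapping torus sequence.
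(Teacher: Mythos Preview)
Your proposal is correct and follows essentially the same route as the paper: invoke Proposition~\ref{MP1} to obtain $\phi_1^{\dag}=\phi_2^{\dag}$ (hence $\phi_1^{\ddag}=\phi_2^{\ddag}$), then apply Theorem~\ref{MT1}. You have in fact supplied more detail than the paper does, explicitly verifying the hypothesis $R_{\phi_1,\phi_2}(K_1(M_{\phi_1,\phi_2}))\subseteq\rho_A(K_0(A))$ of \ref{MP1} via the splitting and Lemma~\ref{DrL}, and checking that $(\phi_1)_\sharp=(\phi_2)_\sharp$ follows from the tracial identity; the paper leaves both of these implicit.
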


\begin{proof}
We only need to show the ``if part" of the statement.
It follows from \ref{MP1} that, in addition, one has
\beq\label{MC1-2}
\phi^{\dag}=\psi^{\dag}.
\eneq
This of course implies that
$
\phi^{\ddag}=\psi^{\ddag}.
$
Then \ref{MT1} applies.
\end{proof}

\begin{thm}\label{MT2}
Let $C$ be a unital  AH-algebra and let $A$ be a unital simple \CA\, with $TR(A)\le 1.$  Suppose that $\phi,\psi: C\to A$ are two unital
monomorphisms such that
\beq\label{MT2-1}
[\phi]=[\psi]\,\,\,{\rm in}\,\,\, KK(C,A),\\\label{MT2-1+1}
\tau\circ \phi=\tau\circ \psi, \tforal \tau\in T(A), \andeqn\\\label{MT2-1+2}
\phi^{\dag}=\psi^{\dag},
\eneq
then
$\phi$ and $\psi$ are asymptotically unitarily equivalent, provided that one of the following holds:
\begin{enumerate}
\item $K_1(C)$ is finitely generated, or
\item $K_0(A)$ is finitely generated, or
\item the short exact sequence
$$
0\to {\rm kre}\rho_A\to K_0(A)\to \rho_A(K_0(A))\to 0
$$
 splits.
 \end{enumerate}
\end{thm}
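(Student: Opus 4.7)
By Corollary \ref{MC1}, it suffices to derive $\overline{R}_{\phi,\psi}=0$ from the listed hypotheses, since $[\phi]=[\psi]$ in $KK(C,A)$ and the tracial equality are already assumed. The plan is to first show that $\phi^{\dag}=\psi^{\dag}$ forces $R_{\phi,\psi}\circ\theta$ to take values in $\rho_A(K_0(A))$ for any splitting $\theta: K_1(C)\to K_1(M_{\phi,\psi})$, and then to use one of the three supplementary hypotheses to lift this through $\rho_A$ to a homomorphism $d: K_1(C)\to K_0(A)$; by the definition of $\mathcal{R}_0$ this exactly says $\overline{R}_{\phi,\psi}=0$.

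For the first step, fix a splitting $\theta$ (which exists because $[\phi]=[\psi]$ in $KK(C,A)$). Given $[u]\in K_1(C)$ represented by a unitary $u\in M_l(C)$, represent $\theta([u])$ by a unitary $z\in M_l(M_{\phi,\psi})$ with $z(0)=\phi(u)$ and $z(1)=\psi(u)$. The path $w(t)=\phi(u)^*z(t)$ in $U(M_l(A))$ runs from $1$ to $\phi(u)^*\psi(u)$, and by the definition in \ref{Rot-M} one has $R_{\phi,\psi}(\theta([u]))(\tau)=\mathrm{Det}(w)(\tau)$ for every $\tau\in T(A)$. The hypothesis $\phi^{\dag}=\psi^{\dag}$ tells us $\phi(u)^*\psi(u)\in DU(M_l(A))$. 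The map $U_0(M_l(A))\to\mathrm{Aff}(T(A))/\rho_A(K_0(A))$ sending a unitary to the determinant of any path from $1$ to that unitary is well-defined (loops contribute exactly $\rho_A(K_0(A))$) and is a group homomorphism into an abelian group; in particular it vanishes on the commutator subgroup $DU(M_l(A))$. Therefore $\mathrm{Det}(w)(\tau)\in\rho_A(K_0(A))$, and $R_{\phi,\psi}\circ\theta$ takes values in $\rho_A(K_0(A))$, as desired.

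For the second step, the obstruction to lifting $R_{\phi,\psi}\circ\theta: K_1(C)\to\rho_A(K_0(A))$ to a homomorphism $d: K_1(C)\to K_0(A)$ with $\rho_A\circ d=R_{\phi,\psi}\circ\theta$ lies in $\mathrm{Ext}^1(K_1(C),\ker\rho_A)$, and each of (1), (2), (3) kills it. In case (3), a splitting of $0\to\ker\rho_A\to K_0(A)\to\rho_A(K_0(A))\to 0$ furnishes the lift directly. In case (2), $\rho_A(K_0(A))$ is a finitely generated torsion-free subgroup of the real vector space $\mathrm{Aff}(T(A))$, hence free abelian, so the extension splits and reduces to case (3). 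In case (1), decompose $K_1(C)=\Z^k\oplus T$ with $T$ a finite torsion group; then $R_{\phi,\psi}\circ\theta$ vanishes on $T$ (because $\rho_A(K_0(A))$ is torsion-free), while on the free summand $\Z^k$ the lift is built by lifting each generator separately. Once $d$ is constructed, $R_{\phi,\psi}\circ\theta\in\mathcal{R}_0$, hence $\overline{R}_{\phi,\psi}=0$, and Corollary \ref{MC1} finishes the proof.

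The crux of the argument is the first step, where one must convert the stated hypothesis $\phi^{\dag}=\psi^{\dag}$, which lives at the level of the uncompleted commutator subgroup $DU$, into the exact inclusion $R_{\phi,\psi}(\theta(K_1(C)))\subseteq\rho_A(K_0(A))$ (not merely into $\overline{\rho_A(K_0(A))}$). This is exactly why the theorem uses the finer invariant $\phi^{\dag}$ rather than the weaker $\phi^{\ddag}$ appearing in earlier asymptotic unitary equivalence results. The supplementary hypotheses (1)--(3) then intervene only to resolve the standard $\mathrm{Ext}$ obstruction in the lifting step, and we expect that some assumption of this flavour is genuinely necessary, as the final remark of the paper's introduction already suggests.
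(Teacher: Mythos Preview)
Your proof is correct and follows essentially the same route as the paper's: first show that $\phi^{\dag}=\psi^{\dag}$ forces $R_{\phi,\psi}\circ\theta$ to land in $\rho_A(K_0(A))$ via the de la Harpe--Skandalis determinant, then use one of the hypotheses (1)--(3) to lift through $\rho_A$ and conclude $\overline{R}_{\phi,\psi}=0$, finishing with the main asymptotic equivalence theorem. The only cosmetic differences are that the paper represents $\theta([u])$ as $[zv]$ (an arbitrary path $z$ corrected by a loop $v$) rather than directly choosing $z$ to represent $\theta([u])$, and invokes Theorem~\ref{MT1} directly (noting $\phi^{\dag}=\psi^{\dag}\Rightarrow\phi^{\ddag}=\psi^{\ddag}$) rather than Corollary~\ref{MC1}; one small imprecision in your write-up is that the determinant homomorphism is only defined on $U_0(M_l(A))$, so ``vanishes on the commutator subgroup'' needs the extra observation that $\overline{\mathrm{Det}}$ is invariant under conjugation by arbitrary unitaries (or a stabilization trick) to cover commutators $[u,v]$ with $u,v\notin U_0(M_l(A))$---but this is standard and the paper does not elaborate either.
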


\begin{proof}
Let $C$ and $A$ be as in the statement. Suppose that $\phi,\, \psi: C\to A$ are two unital monomorphisms which satisfy the assumptions (\ref{MT2-1}),
(\ref{MT2-1+1}) and (\ref{MT2-1+2}).
In particular, (\ref{MT2-1+2}) implies that
\beq\label{MT2-2}
\phi^{\ddag}=\psi^{\ddag}.
\eneq
Since $[\phi]=[\psi],$ there exists a splitting map
$\theta: \underline{K}(C)\to \underline{K}(M_{\phi,\psi})$ such that
\beq\label{MT2-3}
\theta\circ [\pi_0]=[{\rm id}_C].
\eneq
Let $u\in M_l(C)$ be a unitary for some integer $l\ge 1.$
Let $z\in M_l(M_{\phi, \psi})$ be a unitary
such that $z(0)=\phi(u)$ and $z(1)=\psi(u).$ Moreover, we may assume
that $z$ is piecewise smooth.
Define $z_1(t)=\psi(u)^*z(t)$ for $t\in [0,1].$ Then $z_1$ is a piecewise smooth and continuous path of unitaries in $A$ such that
$z_1(0)=\psi(u)^*\phi(u)$ and $z_1(1)=1_{M_l}.$
It follows from (\ref{MT2-1+2}) that
\beq\label{MT2-4}
{1\over{2\pi i}}\int_0^1 \tau({dz_1(t)\over{dt}}z_1(t)^*)dt\in \rho_A(K_0(A)),
\eneq
where $\tau\in T(A).$  One then easily computes that
\beq\label{MT2-5}
R_{\phi, \psi}([z])\in \rho_A(K_0(A)).
\eneq
On the other hand,
there is a projection
$p\in M_{l'}(A)$ such that the following holds:
\beq\label{MT2-6}
\theta([u])=[zv],\,
\eneq
where $v(t)=e^{2\pi it}p+(1_{M_{l'}}-p)$ for all $t\in [0,1].$
To simplify the notation, without loss of generality, we may assume that
$l'=l.$  It follows that
\beq\label{MT2-7}
R_{\phi, \psi}([zv])=R_{\phi, \psi}([z])+R_{\phi, \psi}([v])
\in \rho_A(K_0(A)).
\eneq
It follows that
\beq\label{MT2-8}
R_{\phi, \psi}\circ \theta \in \mathrm{Hom}(K_1(C), \rho_A(K_0(A))).
\eneq
In all three cases (1), (2) and (3),
there exists a \hm\, $\lambda_0: R_{\phi, \psi}\circ \theta(K_1(C))\to K_0(A)$ such that
\beq\label{MT2-9}
\rho_A\circ \lambda_0={\rm id}_{R_{\phi, \psi}\circ \theta(K_1(C))}.
\eneq
Define $\lambda=\lambda_0\circ R_{\phi, \psi}\circ \theta.$
So $\lambda$ is a \hm\, from $K_1(C)$ into $K_0(A).$
Define, by viewing $K_0(A)$ as a subgroup of $K_1(M_{\phi, \psi}),$
\beq\label{MT2-10}
\theta_1=\theta-\lambda.
\eneq
Then
\beq\label{MT2-11}
R_{\phi, \psi}\circ \theta_1=0.
\eneq
It follows that
\beq\label{MT2-12}
\overline{R_{\phi, \psi}}=0.
\eneq
The theorem then follows from \ref{MT1}.
\end{proof}

\begin{cor}\label{C2}
Let $X$ be a finite CW-complex and let $A$ be a unital simple \CA\, with finite tracial rank. Suppose that
$\phi, \psi: C(X)\to A$ are two unital monomorphisms. Then
$\phi$ and $\psi$ are asymptotically unitarily equivalent if and only if
\beq\label{C2-1}
[\phi]=[\psi]\,\,\,\,{\rm in}\,\,\, KK(C,A),\\
\tau\circ \phi=\tau\circ \psi\tforal \tau\in T(A)\andeqn\\
\phi^{\dag}=\psi^{\dag}.
\eneq
\end{cor}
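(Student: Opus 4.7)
My plan is to deduce Corollary \ref{C2} as a direct specialization of Theorem \ref{MT2}, case (1). The three hypotheses listed in \ref{C2}---namely $[\phi]=[\psi]$ in $KK(C(X),A)$, equality of traces $\tau\circ\phi=\tau\circ\psi$ for all $\tau\in T(A)$, and $\phi^{\dag}=\psi^{\dag}$---are literally the hypotheses \eqref{MT2-1}, \eqref{MT2-1+1}, \eqref{MT2-1+2} of Theorem \ref{MT2}. Since $X$ is a finite CW-complex, $C(X)$ is (trivially) a unital commutative AH-algebra, so the framework of \ref{MT2} applies to $C=C(X)$, and it remains only to verify one of the three alternative sufficient conditions there.

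The essential topological input is that $K_1(C(X))\cong K^1(X)$ is a finitely generated abelian group whenever $X$ is a finite CW-complex. This is standard: the Atiyah--Hirzebruch spectral sequence has $E_2$-page equal to $H^*(X;\Z)$, which is finitely generated in each degree when $X$ has finitely many cells, so the topological K-theory is finitely generated as well. This verifies condition (1) of \ref{MT2}, and we conclude that $\phi$ and $\psi$ are asymptotically unitarily equivalent. For the passage from the hypothesis ``$A$ has finite tracial rank'' to the ``$TR(A)\le 1$'' required by \ref{MT2}, one invokes the reduction theorem noted in \ref{TA1} (\cite{Lin-LAH}): a unital separable simple C*-algebra $A$ satisfying the UCT with $TR(A)\le k<\infty$ automatically has $TR(A)\le 1$.

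The ``only if'' direction is routine. If $\lim_{t\to 1} u_t^*\phi(f)u_t=\psi(f)$ for a continuous path of unitaries $\{u_t\}\subset A$, then $\phi$ and $\psi$ are homotopic as unital *-homomorphisms, so $[\phi]=[\psi]$ in $KK(C(X),A)$; each trace is preserved pointwise in $t$ since $\tau(u_t^*\phi(f)u_t)=\tau(\phi(f))$ by the trace property, hence in the limit; and the class of any unitary in $U_\infty(A)/DU_\infty(A)$ is invariant under inner conjugation, giving $\phi^{\dag}=\psi^{\dag}$. I anticipate no serious obstacle here: essentially all the difficult work is already encoded in \ref{MT2}, and this corollary amounts to the straightforward packaging of that theorem together with the elementary topological fact that the K-theory of a finite CW-complex is finitely generated.
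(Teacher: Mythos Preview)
Your ``if'' direction is exactly the paper's intended route: $C(X)$ is a unital AH-algebra, $K_1(C(X))$ is finitely generated because $X$ is a finite CW-complex, so condition (1) of Theorem~\ref{MT2} applies. That part is fine.

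The ``only if'' direction, however, has a genuine gap. You argue that the class of a unitary in $U_\infty(A)/DU_\infty(A)$ is invariant under inner conjugation, and then pass to the limit along the path $u_t^*\phi(v)u_t\to\psi(v)$. The conjugation invariance is correct: $u_t^*\phi(v)u_t\,\phi(v)^* = [u_t^{-1},\phi(v)]\in DU(A)$ for each $t$. But passing to the limit only places $\psi(v)\phi(v)^*$ in the \emph{closure} $CU(A)=\overline{DU(A)}$, not in $DU(A)$ itself, since $DU_\infty(A)$ is not known to be closed. So your direct argument yields only $\phi^{\ddag}=\psi^{\ddag}$, not $\phi^{\dag}=\psi^{\dag}$.

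The paper's route around this is Proposition~\ref{MP1}. From asymptotic unitary equivalence one first extracts, via the ``only if'' half of Theorem~\ref{MT1}, that $[\phi]=[\psi]$ in $KK(C,A)$, that the traces agree, and that $\overline{R}_{\phi,\psi}=0$. The last condition forces $R_{\phi,\psi}(K_1(M_{\phi,\psi}))\subset\rho_A(K_0(A))$, and then Proposition~\ref{MP1} (which ultimately rests on Ng's result \cite{Ng} identifying the kernel of the determinant with $DU$) delivers $\phi^{\dag}=\psi^{\dag}$. This step is not routine; it is precisely where the distinction between $DU$ and $CU$ is resolved.

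A minor side remark: your reduction from ``finite tracial rank'' to $TR(A)\le 1$ via \cite{Lin-LAH} carries a UCT hypothesis on $A$ that the corollary does not state; this is a wrinkle in the statement rather than in your argument.
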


\begin{rem}\label{MR}
{\rm
We would point out that the assumptions
in \ref{MT1} is more sensitive than those in (\ref{MT2-1}), (\ref{MT2-1+1}) and (\ref{MT2-1+2}), in general.

Let $A$ be a unital simple AF-algebra with $K_0(A)$ given by
a non-splitting short exact sequence
\beq\label{MR-0}
0\to G\to K_0(A)\to \Q\to 0,
\eneq
where $G$ is a countable abelian group and where the order of an element is determined by its image in $\Q.$
In particular, $A$ has a unique tracial state $\tau$ and
$\rho_A(K_0(A))=\Q.$
Let $C$ be a unital simple \CA\, of tracial rank zero with
$K_1(C)=\Q\oplus {\rm Tor}(K_1(C))$ which also satisfies the UCT.
Let $\kappa\in KK(C,A)^{++}$ such that $\kappa([1_C])=[1_A].$
Then there exists a unital monomorphism $\phi: C\to A$ such that
$[\phi]=\kappa.$ Let $\lambda=\phi_T: T(A)\to T(C)$ be the affine continuous map induced by
$\phi.$ Let $\gamma: K_1(C)\to \rho_A(K_0(A))$ be an isomorphism as abelian group.
It follows from 4.8 of \cite{L-N} that there exists a unital monomorphism
$\psi: C\to A$ such that $[\psi]=\kappa=[\phi],$
$\psi_T=\lambda=\phi_T$ and
there exists a splitting map $\theta: \underline{K}(C)\to \underline{K}(M_{\phi, \psi})$ such that
\beq\label{MR-1}
R_{\phi, \psi}\circ \theta=\gamma +\gamma_0,
\eneq
where $\gamma_0\in {\cal R}_0.$ We may write
$\gamma_0=\rho_A\circ f,$ where $f: K_1(C)\to K_0(A)$  is a \hm.
It follows from \ref{MP1} that
\beq\label{MR-2}
\phi^{\dag}=\psi^{\dag}.
\eneq
However, there is no \hm\, $\lambda_1: K_1(C)\to K_0(A)$ such
that
$$
R_{\phi, \psi}\circ \theta=\rho_A\circ \lambda_1.
$$
Otherwise, $\eta=(\lambda_1-f)\circ \gamma^{-1}$ would split
the short exact sequence (\ref{MR-0}),
since
\beq
\rho_A\circ \eta &=&\rho_A\circ (\lambda_1-f)\circ \gamma_1^{-1}\\
&=& (R_{\phi, \psi}\circ \theta-\rho_A\circ f)\circ \gamma_1^{-1}\\
&=& (\gamma+\gamma_0-\gamma_0)\circ \gamma^{-1}={\rm id}_{\rho_A(K_0(A))}.
\eneq
In other words,
$$
\overline{R_{\phi, \psi}}\not=0.
$$
}

\end{rem}


\bibliographystyle{plain}

\end{document}